\pgfplotsset{compat=1.14}
\newcommand{\email}[1]{{\tt #1}}
\newcommand{\R}{\mathbb{R}}
\newcommand{\norm}[1]{\|#1\|}
\newcommand{\dist}[1]{{\rm dist}(#1)}
\newcommand{\mv}{\,\mid\,}
\newcommand{\A}{{\cal A}}
\newcommand{\B}{{\cal B}}
\newcommand{\I}{{\cal I}}
\newcommand{\M}{{\cal M}}
\newcommand{\New}{{\cal N}}
\newcommand{\T}{{\cal T}}
\newcommand{\V}{{\cal V}}
\newcommand{\U}{{\cal U}}
\newcommand{\Sp}{{\mathcal S}}
\newcommand{\F}{{\cal F}}
\newcommand{\Z}{{\cal Z}}
\newcommand{\longsetto}[1]{\mathop{\longrightarrow}\limits^{#1}}
\newcommand{\skalp}[1]{\langle #1\rangle}
\newcommand{\xb}{\bar x}
\newcommand{\yb}{\bar y}
\newcommand{\AT}[2]{{\textstyle{#1\atop#2}}}
\newcommand{\xba}{{\bar x^\ast}}
\newcommand{\OO}{{\cal O}}
\newcommand{\argmin}{\mathop{\rm arg\,min}}
\newcommand{\Span}{{\rm span\,}}
\newcommand{\ri}{{\rm ri\,}}
\newcommand{\gph}{\mathrm{gph}\,}
\newcommand{\dom}{\mathrm{dom}\,}
\newcommand{\tto}{\rightrightarrows}
\newcommand{\Limsup}{\mathop{{\rm Lim}\,{\rm sup}}}
\newcommand{\myvec}[1]{\begin{pmatrix}#1\end{pmatrix}}
\newcommand{\SCD}{SCD\ }
\newcommand{\subreg}{{\rm subreg\,}}
\newcommand{\lsubreg}{{\rm l\mbox{-}subreg\,}}
\newcommand{\reg}{{\rm reg\,}}
\newcommand{\ssstar}{semismooth$^{*}$ }
\newcommand{\ee}[2]{#1^{(#2)}}
\newcommand{\rge}{{\rm rge\;}}
\renewcommand{\ker}{{\rm ker\;}}
\newlength{\myAlgBox}
\newtheorem{theorem}{Theorem}[section]
\newtheorem{proposition}[theorem]{Proposition}
\newtheorem{remark}[theorem]{Remark}
\newtheorem{lemma}[theorem]{Lemma}
\newtheorem{definition}[theorem]{Definition}
\newtheorem{example}[theorem]{Example}
\newtheorem{algorithm}{Algorithm}
\title{On the application of the \SCD semismooth* Newton method \\ to
variational inequalities of the second kind}
\author{Helmut Gfrerer\thanks{Institute of Computational Mathematics, Johannes Kepler University
Linz, A-4040 Linz, Austria; \email{helmut.gfrerer@jku.at}}
 \and   Ji\v{r}\'{i} V. Outrata\thanks{Institute of Information Theory and Automation, Czech Academy of
 Sciences, 18208 Prague, Czech Republic, and Centre for
              Informatics and Applied Optimization, Federation University of Australia, POB 663,
              Ballarat,  Vic 3350, Australia,  \email{outrata@utia.cas.cz}}
\and{Jan Valdman\thanks{Institute of Information Theory and Automation, Czech Academy of
 Sciences, 18208 Prague, Czech Republic, and Institute of Mathematics, Faculty of Science,
University of South Bohemia,
37005~\v{C}esk\'{e}~Bud\v{e}jovice,
Czech Republic, \email{jan.valdman@utia.cas.cz}}}}
\date{}
\begin{document}
\maketitle

{\footnotesize
{\bf Abstract.}
The paper starts with a description of the SCD (subspace containing derivative) mappings and the SCD \ssstar Newton method for the solution of general inclusions. This method is then applied to a class of  variational inequalities of the second kind. As a result, one obtains an implementable algorithm exhibiting a locally superlinear convergence. Thereafter we suggest several globally convergent hybrid algorithms in which one combines the SCD  \ssstar Newton method with selected splitting algorithms for the solution of monotone variational inequalities. Finally we demonstrate the efficiency of one of these methods via a Cournot-Nash equilibrium, modeled as a variational inequalities of the second kind, where one admits really large numbers of players (firms) and produced commodities.
\\
{\bf Key words.} Newton method, semismoothness${}^*$, superlinear convergence, global convergence, generalized equation,
coderivatives.
\\
{\bf AMS Subject classification.} 65K10, 65K15, 90C33.}

\section{Introduction}
In \cite{GfrOut21} the authors proposed the so-called semismooth* Newton method for the numerical solution of a general inclusion
\begin{equation*}
0 \in H(x),
\end{equation*}
where $H:\mathbb{R}^{n} \rightrightarrows   \mathbb{R^{n}}$ is a closed-graph multifunction. This method has been further developed in \cite{GfrOut21a}, where it has coined the name SCD (subspace containing derivative) semismooth* Newton method. When compared with the original method from \cite{GfrOut21a}, the new variant requires a slightly stronger approximation of the limiting coderivative of $H$, but exhibits  locally superlinear convergence under substantially less restrictive assumptions. The aim of this paper is to work out this Newton-type method for the numerical solution of the {\em generalized equation} (GE)
\begin{equation}\label{EqVI2ndK}
0 \in H(x):= f(x)+\partial q (x),
\end{equation}
where $f:\mathbb{R}^{n} \rightarrow \mathbb{R}^{n}$ is continuously differentiable, $q:\mathbb{R}^{n} \rightarrow \overline{\mathbb{R}}$ is proper convex and lower-semicontinuous (lsc) and $\partial$ stands for the classical Moreau-Rockafellar subdifferential. It is easy to see that GE (\ref{EqVI2ndK}) is equivalent to the variational inequality  (VI):

Find $\bar{x} \in \mathbb{R}^{n}$ such that
\begin{equation}\label{eq-102}
\langle f(\xb), x-\bar{x}\rangle + q(x)-q(\bar{x})\geq 0 ~ \mbox{ for all } ~ x \in \mathbb{R}^{n}.
\end{equation}

The model \eqref{eq-102} has been introduced in \cite{Gl84} and one speaks about the {\em variational inequality (VI) of the second kind}. It is widely used in the literature dealing with equilibrium models in continuum mechanics cf., e.g., \cite{Hasl} and the references therein. For the numerical solution of GE \eqref{EqVI2ndK}, a number of methods can be used ranging from nonsmooth optimization methods (applicable when $\nabla f$ is symmetric) up to a broad family of splitting methods (usable when $H$ is monotone), cf. \cite[Chapter 12]{FaPa03}. If GE \eqref{EqVI2ndK} amounts to stationarity condition for a Nash game, then also a simple coordinate-wise optimization technique can be used, cf. \cite{KaSch18} and \cite{OV}. Concerning the Newton type methods, let us mention, for instance, the possibility to write down GE \eqref{EqVI2ndK} as an equation on a monotone graph, which enables
us to apply the Newton procedure from \cite{Rob11}. Note, however, that the subproblems to be solved in this approach are typically rather difficult. In other papers the authors reformulate the problem as a (standard) nonsmooth equation which is then solved by the classical semismooth Newton method, see, e.g., \cite{ItKu09, XiLiWeZh18}.

 As mentioned above, in this paper we will investigate the numerical solution of GE (\ref{EqVI2ndK}) via the SCD semismooth* Newton method developed in \cite{GfrOut21a}. In contrast to the Newton methods by Josephy, in this method (as well as in its original variant from \cite{GfrOut21})  the multi-valued part of (\ref{EqVI2ndK}) is also approximated and, differently to some other Newton-type methods, this approximation is provided by means of a linear subspace belonging to the graph of the limiting coderivative of $\partial q$.  In this way the computation of the Newton direction
 reduces to the solution of a linear system of equations. To ensure locally superlinear convergence, two properties have to be fulfilled. The first one is a weakening of the \ssstar property from \cite{GfrOut21} and pertains the subdifferential mapping
 $\partial q$. The second one, called
 SCD regularity, concerns the mapping $H$ and
 amounts, roughly speaking, to the strong
 metric subregularity of the considered GE {\em around} the solution.

 The plan of the paper is as follows. After the preliminary Section 2, where we provide the needed background from modern variational analysis, Section 3 is devoted
 to the broad class of SCD mappings, which is the basic framework for the application of
 the used method. In particular,
  the subdifferential of a
 proper convex lsc function is an SCD mapping. In Section 4 the SCD semismooth* Newton method is described and its convergence is analyzed. Thereafter, in Section 5 we develop an implementable version of the method for the solution of GE \eqref{EqVI2ndK} and show its locally superlinear convergence under mild assumptions. Section 6 deals with the issue of global convergence. First we suggest a heuristic modification of the method from the preceding section which exhibits very good convergence properties in the numerical experiments. Thereafter we show global convergence for a family of hybrid algorithms, where one combines the semismooth* Newton method with various frequently used splitting methods.
 Finally, in Section 7  we demonstrate the efficiency of the developed methods via a Cournot-Nash equilibrium problem taken over from \cite{MSS} which can be modeled in the form of GE \eqref{EqVI2ndK}. In contrast to the numerical approach in \cite{MSS}, we may work here with "arbitrarily" large numbers of player (firms) and commodities,

  The following notation is employed. Given a matrix $A$,  $\rge A$ and $\ker A$ denote the range space and the kernel of $A$, respectively, and $\norm{A}$ stands for its spectral norm. For a set $\Omega$, $\dist{x,\Omega}:=\inf_{a \in \Omega} \norm{x-a}$
  signifies the
  distance from $x$ to $\Omega$ and  $\ri \Omega$ is the relative interior of $\Omega$. Further, $L^\perp$ denotes the anihilator of a linear subspace $L$ and ${\rm diag\,} (A,B)$ means a block diagonal matrix with matrices $A, B$ as diagonal blocks.

\section{Preliminaries}
Throughout the whole paper, we will frequently use  the following basic notions of modern
variational analysis.

 \begin{definition}
 Let $A$  be a  set in $\mathbb{R}^{s}$, $\bar{x} \in A$ and $A$ be locally closed around $\bar{x}$. Then
\begin{enumerate}
 \item [(i)]The  {\em tangent (contingent, Bouligand) cone}
 to $A$ at $\bar{x}$ is given by
 \[
 T_{A}(\bar{x}):=\Limsup\limits_{t\downarrow 0} \frac{A-\bar{x}}{t}.
 \]
 \item[(ii)] The set
 \[\widehat{N}_{A}(\bar{x}):=(T_{A}(\bar{x}))^{\circ}\]
 is the {\em regular (Fr\'{e}chet) normal cone} to $A$ at $\bar{x}$, and
 \[N_{A}(\bar{x}):=\Limsup\limits_{\stackrel{A}{x \rightarrow \bar{x}}} \widehat{N}_{A}(x)\]
 is the {\em limiting (Mordukhovich) normal cone} to $A$ at $\bar{x}$. \if{Given a direction $d
 \in\mathbb{R}^{s}$,
\[ N_{A}(\bar{x};d):= \Limsup\limits_{\stackrel{t\downarrow 0}{d^{\prime}\rightarrow
 d}}\widehat{N}_{A}(\bar{x}+ td^{\prime})\]
 is the {\em directional limiting normal cone} to $A$ at $\bar{x}$ {\em in direction} $d$.}\fi
 \end{enumerate}
\end{definition}
In this definition ''$\Limsup$'' stands for the Painlev\' e-Kuratowski {\em outer (upper) set limit}, see, e.g., \cite{RoWe98}.
The above listed cones enable us to describe the local behavior of set-valued maps via various
generalized derivatives. Let $F:\R^n\tto\R^m$ be a (set-valued) mapping with the domain and the graph
\[\dom F:=\{x\in\R^n\mv F(x) \not = \emptyset\},\quad \gph F:=\{(x,y)\in\R^n\times\R^m \mv y\in F(x)\}.\]

\begin{definition}
Consider a (set-valued) mapping $F:\R^n\tto\R^m$ and let $\gph F$ be locally closed around some $(\xb,\yb)\in \gph F$.
\begin{enumerate}
\item[(i)]
 The multifunction $DF(\xb,\yb):\R^n\tto\R^m$, given by $\gph DF(\xb,\yb)=T_{\gph F}(\xb,\yb)$, is called the {\em graphical derivative} of $F$ at $(\xb,\yb)$.
\item [(ii)]  The multifunction $D^\ast F(\xb,\yb ): \R^m \tto \R^n$,  defined by
 \[ \gph D^\ast F(\xb,\yb )=\{(y^*,x^*)\mv (x^*,-y^*)\in N_{\gph F}(\xb,\yb)\}\]
is called the {\em limiting (Mordukhovich) coderivative} of $F$ at $(\xb,\yb )$.
\end{enumerate}
\end{definition}

Let us now recall the following regularity notions.
\begin{definition}
  Let $F:\R^n\tto\R^m$ be a a (set-valued) mapping and let $(\xb,\yb)\in\gph F$.
  \begin{enumerate}
    \item $F$ is said to be {\em metrically subregular at} $(\xb,\yb)$ if there exists $\kappa\geq 0$  along with some  neighborhood $X$ of $\xb$ such that
    \begin{equation*}
      \dist{x,F^{-1}(\yb)}\leq \kappa\,\dist{\yb,F(x)}\ \forall x\in X.
    \end{equation*}
  \item $F$ is said to be {\em strongly metrically subregular at} $(\xb,\yb)$ if it is metrically subregular at $(\xb,\yb)$ and there exists a neighborhood $X'$ of $\xb$ such that $F^{-1}(\yb)\cap X'=\{\xb\}$.
  \item $F$ is said to be {\em metrically regular around} $(\xb,\yb)$ if there is $\kappa\geq 0$ together with neighborhoods $X$ of $\xb$ and $Y$ of $\yb$ such that
      \begin{equation*}
      \dist{x,F^{-1}(y)}\leq \kappa\,\dist{y,F(x)}\ \forall (x,y)\in X\times Y.
    \end{equation*}
  \item $F$ is said to be {\em strongly metrically regular around} $(\xb,\yb)$ if it is metrically regular around $(\xb,\yb)$ and $F^{-1}$ has a single-valued localization around $(\yb,\yb)$, i.e., there are  open neighborhoods $Y'$ of $\yb$, $X'$ of $\xb$ and a mapping $h:Y'\to\R^n$ with $h(\yb)=\xb$ such that $\gph F\cap (X'\times Y')=\{(h(y),y)\mv y\in Y'\}$.
  \end{enumerate}
\end{definition}

It is easy to see that the strong metric regularity around $(\xb,\yb)$ implies the strong metric subregularity  at $(\xb,\yb)$ and the metric regularity around $(\xb,\yb)$ implies the metric subregularity at $(\xb,\yb)$. To check the metric regularity one often employs the so-called
Mordukhovich criterion, according to which this property around $(\xb,\yb)$
is equivalent with the condition
\begin{equation}\label{EqMoCrit}
 0\in D^*F(\xb,\yb)(y^*)\ \Rightarrow\ y^*=0.
\end{equation}
For pointwise characterizations of the other stability properties from Definition 2.3 the reader is referred to \cite[Theorem 2.7]{GfrOut21a}.

We end up this preparatory section with a definition of the \ssstar property which paved the way both to \ssstar Newton method in \cite{GfrOut21} as well as to the SCD \ssstar Newton method in \cite{GfrOut21a}.
\begin{definition}
We say that $F:\R^n\tto\R^n$ is {\em \ssstar}  at $(\xb,\yb)\in\gph F$ if
for every $\epsilon>0$ there is some $\delta>0$ such that the inequality
\begin{align*}
\vert \skalp{x^*,x-\xb}-\skalp{y^*,y-\yb}\vert&\leq \epsilon
\norm{(x,y)-(\xb,\yb)}\norm{(x^*,y^*)}
\end{align*}
holds for all $(x,y)\in \gph F\cap \B_\delta(\xb,\yb)$ and all $(y^*,x^*)$ belonging to $\gph D^\ast F(\xb,\yb )$.
\end{definition}

\if{
It is well-known, see, e.g. \cite{DoRo14}, that the property of (strong) metric subregularity for $F$ at $(\xb,\yb)$ is equivalent with the property of {\em (isolated) calmness}  for $F^{-1}$ at $(\yb,\xb)$.
Further, $F$ is metrically regular around $(\xb,\yb)$ if the inverse mapping $F^{-1}$ has the so-called {\em Aubin property} around $(\yb,\xb)$. In this paper we will frequently use the following characterization of strong metric regularity.
\begin{theorem}[{ cf. \cite[Proposition 3G.1]{DoRo14}}]\label{ThStrMetrReg} $F:\R^n\tto\R^m$ is strongly metrically regular around $(\xb,\yb)$ if and only if $F^{-1}$ has a Lipschitz continuous localization $h$  around $(\yb,\xb)$. In this case there holds
\[\reg F(\xb,\yb)=\limsup_{\AT{y,y'\to\yb}{y\not=y'}}\frac {\norm{h(y)-h(y')}}{\norm{y-y'}}.\]
\end{theorem}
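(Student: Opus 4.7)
The plan is to prove the two implications in the equivalence separately and then identify the moduli.

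\textbf{Forward direction.} Suppose $F$ is strongly metrically regular around $(\xb,\yb)$ with constant $\kappa$ and single-valued localization $h:Y'\to\R^n$; the defining identity for the localization forces $h(Y')\subset X'$. For $y_1,y_2\in Y'$, the inclusion $y_2\in F(h(y_2))$ gives $\dist{y_1,F(h(y_2))}\leq\norm{y_1-y_2}$, so applying metric regularity at the point $h(y_2)$ with target $y_1$ yields
\[\dist{h(y_2),F^{-1}(y_1)}\leq\kappa\,\norm{y_1-y_2}.\]
Because $F^{-1}(y_1)\cap X'=\{h(y_1)\}$ and $h(y_2)\in X'$, the left-hand side equals $\norm{h(y_1)-h(y_2)}$, so $h$ is Lipschitz on $Y'$ with constant at most $\kappa$.

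\textbf{Reverse direction.} Assume $F^{-1}$ admits a Lipschitz localization $h:Y'\to\R^n$ with constant $L$. Single-valuedness of the localization is already part of the hypothesis; only the metric regularity estimate is at stake. After shrinking $X'$ and $Y'$ so that $X'\subset h(Y')$ and so that for each $(x,y)\in X'\times Y'$ the set $F(x)\cap Y'$ is nonempty, I would pick any $y'\in F(x)\cap Y'$. Single-valuedness then forces $h(y')=x$, hence
\[\dist{x,F^{-1}(y)}\leq\norm{h(y)-x}=\norm{h(y)-h(y')}\leq L\,\norm{y-y'}.\]
Taking the infimum over $y'$ yields metric regularity with constant $L$, and the localization property supplies strong metric regularity.

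\textbf{Modulus equality and main obstacle.} Writing $\ell$ for the limsup expression in the theorem (the pointwise Lipschitz modulus of $h$ at $\yb$), the forward step shows $\ell\leq\kappa$ for every admissible $\kappa$, hence $\ell\leq\reg F(\xb,\yb)$, while the reverse step shows $\reg F(\xb,\yb)\leq L$ for every local Lipschitz constant $L$ of $h$, giving the opposite inequality. The hard part will be the neighborhood coordination in the reverse direction: guaranteeing $F(x)\cap Y'\neq\emptyset$ and matching the infimum over this intersection with $\dist{y,F(x)}$ both require shrinking $X'$ enough so that the closest elements of $F(x)$ to any $y\in Y'$ lie in $Y'$. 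This relies on the openness of the range $h(Y')$ near $\xb$, so one cannot take $X'$ arbitrarily large, and a careful quantitative choice is needed to make both bounds tight.
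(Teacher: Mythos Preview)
The paper does not prove this statement; it is quoted without proof from \cite[Proposition 3G.1]{DoRo14} (and in fact sits inside a block that is commented out of the final version). So there is no ``paper's proof'' to compare against, and your attempt has to be judged on its own.

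Your forward direction is essentially right, but the line ``Because $F^{-1}(y_1)\cap X'=\{h(y_1)\}$ and $h(y_2)\in X'$, the left-hand side equals $\norm{h(y_1)-h(y_2)}$'' is not justified as written: $\dist{h(y_2),F^{-1}(y_1)}$ is the distance to \emph{all} of $F^{-1}(y_1)$, and a nearer point could a priori sit outside $X'$. You need an extra shrinking step (take $y_1,y_2$ so close to $\yb$ that any point of $F^{-1}(y_1)$ within $\kappa\norm{y_1-y_2}$ of $h(y_2)$ must lie in $X'$) before the equality follows.

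The reverse direction has a genuine gap. You propose to ``shrink $X'$ so that $X'\subset h(Y')$'' and later say this ``relies on the openness of the range $h(Y')$ near $\xb$''. That openness is neither available in general nor needed. A simple obstruction: let $F(0)=\R^m$ and $F(x)=\emptyset$ for $x\neq 0$; then $h\equiv 0$ is a Lipschitz localization of $F^{-1}$ with $h(Y')=\{0\}$, so no open $X'$ fits inside it, yet $F$ is (trivially) strongly metrically regular. The correct repair is a case split rather than an openness assumption: fix balls $\B_\rho(\yb)\subset \B_{4\rho}(\yb)\subset Y'$ and $\B_{L\rho}(\xb)\subset X'$. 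For $x\in\B_{L\rho}(\xb)$, $y\in\B_\rho(\yb)$, if $\dist{y,F(x)}<3\rho$ then the (near-)minimizer $y'\in F(x)$ lies in $\B_{4\rho}(\yb)\subset Y'$, whence $x=h(y')$ and your Lipschitz estimate gives $\dist{x,F^{-1}(y)}\leq L\norm{y-y'}$; if instead $\dist{y,F(x)}\geq 3\rho$, then crudely $\dist{x,F^{-1}(y)}\leq\norm{x-h(y)}\leq\norm{x-\xb}+L\norm{y-\yb}\leq 2L\rho\leq L\,\dist{y,F(x)}$. Either way you get the regularity constant $L$, and the modulus identity follows as you indicated. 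Alternatively, observe that a Lipschitz single-valued localization of $F^{-1}$ immediately yields the Aubin property for $F^{-1}$, which is equivalent to metric regularity of $F$; this route sidesteps the neighborhood bookkeeping entirely.
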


In this paper we will also use the  following point-based characterizations of the above regularity properties.
\begin{theorem}\label{ThCharRegByDer}
    Let $F:\R^n\tto\R^m$ be a mapping and let $(\xb,\yb)\in\gph F$.
    \begin{enumerate}
    \item[(i)] (Levy-Rockafellar criterion) $F$ is strongly metrically subregular at $(\xb,\yb)$ if and only if
      \begin{equation}\label{EqLevRoCrit}
        0\in DF(\xb,\yb)(u)\ \Rightarrow u=0.
      \end{equation}
      and in this case one has
      \[\subreg F(\xb,\yb)=\sup\{\norm{u}\mv\exists v: (u,v)\in\gph DF(\xb,\yb),\ \norm{v}\leq 1\}.\]
    \item[(ii)] (Mordukhovich criterion) $F$ is metrically regular around $(\xb,\yb)$ if and only if
      \begin{equation}
            \label{EqMoCrit} 0\in D^*F(\xb,\yb)(y^*)\ \Rightarrow\ y^*=0.
      \end{equation}
      Further, in this case one has
      \begin{equation}
        \label{EqModMetrReg} {\rm reg\;}F(\xb,\yb)=\sup\{\norm{y^*}\mv\exists x^*: (y^*,x^*)\in\gph D^*F(\xb,\yb),\ \norm{x^*}\leq 1\}.
      \end{equation}
    \item[(iii)] $F$ is strongly metrically regular around $(\xb,\yb)$ if and only if
      \begin{equation}
            \label{EqStrictCrit} 0\in D_*F(\xb,\yb)(u)\ \Rightarrow\ u=0
      \end{equation}
      and \eqref{EqMoCrit} holds. In this case one also has
      \begin{equation}\label{EqModStrMetrReg}
        {\rm reg\;}F(\xb,\yb) = \sup\{\norm{u}\mv\exists v:\ (u,v)\in\gph D_*F(\xb,\yb),\ \norm{v}\leq 1\}.
      \end{equation}
    \end{enumerate}
\end{theorem}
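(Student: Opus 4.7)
The plan is to handle parts (i), (ii), (iii) separately, each consisting of a classical pointwise characterization of the respective notion from Definition 2.3 together with an exact-modulus identity. Parts (i) and (iii) rely on blow-up arguments on $\gph F$ in the primal space, while (ii) is the dual-space coderivative criterion of Mordukhovich.

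For (i), I would first prove the easy direction ``strong metric subregularity $\Rightarrow$ (\ref{EqLevRoCrit})''. Take $(u,v)\in\gph DF(\xb,\yb)$ with $v=0$; unfolding $T_{\gph F}(\xb,\yb)$ gives sequences $t_k\downarrow 0$ and $(u_k,v_k)\to(u,0)$ with $(\xb+t_ku_k,\yb+t_kv_k)\in\gph F$. Applying the strong-subregularity estimate at $x=\xb+t_ku_k$ together with the local injectivity $F^{-1}(\yb)\cap X'=\{\xb\}$ yields $t_k\norm{u_k}\leq\kappa\,t_k\norm{v_k}$, forcing $u=0$. For the converse I argue by contradiction: if strong metric subregularity fails, one obtains either a sequence $x_k\to\xb$ with $x_k\in F^{-1}(\yb)\setminus\{\xb\}$, or sequences $x_k\to\xb$ and $y_k\in F(x_k)$ with $\norm{x_k-\xb}>k\norm{y_k-\yb}$. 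Rescaling by $t_k:=\norm{x_k-\xb}$, $u_k:=(x_k-\xb)/t_k$, $v_k:=(y_k-\yb)/t_k$ and passing to a cluster point produces $(u,0)\in\gph DF(\xb,\yb)$ with $\norm{u}=1$, contradicting (\ref{EqLevRoCrit}). The exact formula for $\subreg F(\xb,\yb)$ then follows by running these two arguments quantitatively, tracking the ratio $\norm{u}/\norm{v}$ along the approximating sequences.

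Part (ii) is the Mordukhovich coderivative criterion, already flagged in the preceding paragraph of the excerpt as (\ref{EqMoCrit}); I would simply invoke the standard result. For the modulus identity (\ref{EqModMetrReg}) the argument quantifies both directions: a violating pair in the metric-regularity inequality can be localized, via the Ekeland principle and a fuzzy sum rule, into $\varepsilon$-regular normals whose outer limits are coderivative pairs realizing the prescribed norm ratio, while any coderivative pair in turn yields a lower bound on $\reg F(\xb,\yb)$ through a standard approximation argument for the limiting normal cone.

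For (iii), the strategy is to combine metric regularity (characterized in (ii)) with the graphical-derivative criterion (\ref{EqStrictCrit}) for single-valuedness. The strict graphical derivative $D_*F(\xb,\yb)$ is the outer graphical limit of $DF$ at nearby graph points, so (\ref{EqStrictCrit}) is exactly the Levy--Rockafellar-type condition forcing the associated localization to be uniformly strongly metrically subregular. Hence, assuming (\ref{EqMoCrit}) and (\ref{EqStrictCrit}), the Aubin-property localization supplied by metric regularity must collapse to a Lipschitz single-valued mapping, i.e.\ $F$ is strongly metrically regular; the converse is routine since the graphical derivative of a Lipschitz single-valued localization inherits the trivial-kernel property. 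The modulus identity (\ref{EqModStrMetrReg}) then follows by identifying the Lipschitz constant of the localization with the supremum over $\gph D_*F(\xb,\yb)$ appearing on the right-hand side. The main obstacle is the bookkeeping across the various localizations and moduli; since the paper explicitly refers the reader to \cite[Theorem 2.7]{GfrOut21a} for these characterizations, I would align the quantitative claims with the formulation there.
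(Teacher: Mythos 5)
Your outline is correct, but it takes a genuinely different route from the paper: the paper's entire proof is a citation (statement (i) to Dontchev--Rockafellar, Theorem 4E.1; statement (ii) to Mordukhovich's book; statement (iii) to Dontchev--Frankowska, Theorem 16.2), with the only original content being the remark that the condition $\xb\in\liminf_{y\to\yb}F^{-1}(y)$ required in the last reference is automatic under metric regularity, itself characterized by \eqref{EqMoCrit}. You instead reconstruct the standard direct arguments: the blow-up/rescaling proof of the Levy--Rockafellar criterion, the Ekeland-plus-fuzzy-calculus proof of the Mordukhovich criterion with its exact modulus, and the combination ``strict-derivative injectivity gives local single-valuedness of $F^{-1}$, the Aubin property gives nonemptiness and the Lipschitz estimate'' for (iii). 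This buys self-containedness at the cost of length; the paper's version buys brevity at the cost of outsourcing everything. Two small points worth making explicit if you write this out: in the easy direction of (i) you need the observation that, with the paper's definition of strong metric subregularity (metric subregularity plus $F^{-1}(\yb)\cap X'=\{\xb\}$), one has $\dist{x,F^{-1}(\yb)}=\norm{x-\xb}$ for $x$ sufficiently close to $\xb$, since the remaining points of $F^{-1}(\yb)$ stay uniformly away from $\xb$; and in (iii) the nonemptiness of $F^{-1}(y)$ for $y$ near $\yb$ (the $\liminf$ condition the paper singles out) is exactly what the Aubin property of $F^{-1}$ supplies, so your appeal to metric regularity there is doing real work and should not be glossed over. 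Neither issue is a gap, only bookkeeping you have already signalled you would align with the cited formulations.
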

\begin{proof}
  Statement (i)  follows from \cite[Theorem 4E.1]{DoRo14}. Statement (ii) can be found in \cite[Theorem 3.3]{Mo18}. The criterion for strong metric regularity follows from Dontchev and Frankowska \cite[Theorem 16.2]{DoFra13} by taking into account that the condition $\xb\in \liminf_{y\to\yb}F^{-1}(y)$  appearing in \cite[Theorem 16.2]{DoFra13} can be ensured by the requirement that $F$ is metrically regular which in turn can be characterized by the Mordukhovich criterion.
\end{proof}
For a sufficient condition for metric subregularity based on directional limiting coderivatives we refer to \cite{GO3}.

The properties of (strong) metric regularity and strong metric subregularity are stable under Lipschitzian and calm  perturbations, respectively, cf. \cite{DoRo14}. Further note that the property of (strong) metric regularity hold around all points belonging to the graph of $F$ sufficiently close to the reference point, whereas the property of (strong) metric subregularity
is guaranteed to hold only at the reference point. This leads to the following definition.
\begin{definition}\label{DefLocStrSubReg}
  We say that the mapping $F:\R^n\tto\R^m$ is {\em (strongly) metrically subregular around} $(\xb,\yb)\in\gph F$ if there is a neighborhood $W$ of $(\xb,\yb)$ such that $F$ is (strongly) metrically subregular at every point $(x,y)\in\gph F\cap W$ and we define
  \[\lsubreg F(\xb,\yb):=\limsup_{(x,y)\longsetto{{\gph F}}(\xb,\yb)}\subreg F(x,y)<\infty.\]
  In this case we will also speak about {\em (strong) metric subregularity on a neighborhood}.
\end{definition}
Note that every polyhedral multifunction, i.e., a mapping whose graph is the union of finitely many convex polyhedral sets, is metrically subregular around every point of its graph by Robinson's result \cite{Ro81}. In Section \ref{SecStrMetrSubr}, characterizations of strong metric subregularity on a neighborhood will be investigated.

Next we  introduce the \ssstar sets and mappings.

\begin{definition}\label{DefSemiSmooth}(cf. \cite{GfrOut21}.)
\begin{enumerate}
\item  A set $A\subseteq\R^s$ is called {\em \ssstar} at a point $\xb\in A$ if for all $u\in
    \R^s$ it holds
  \begin{equation}\label{EqSemiSmoothSet}\skalp{x^*,u}=0\quad \forall x^*\in N_A(\xb;u).
  \end{equation}
\item
A set-valued mapping $F:\R^n\tto\R^m$ is called {\em \ssstar} at a point $(\xb,\yb)\in\gph F$, if
$\gph F$ is \ssstar at $(\xb,\yb)$, i.e., for all $(u,v)\in\R^n\times\R^m$ we have
\begin{equation}\label{EqSemiSmooth}
\skalp{u^*,u}=\skalp{v^*,v}\quad \forall (v^*,u^*)\in\gph D^*F((\xb,\yb);(u,v)).
\end{equation}
\end{enumerate}
\end{definition}
The class of semismooth* mappings is rather broad.
We list here two important classes of multifunctions having this property.
\begin{proposition}
  \label{PropSSstar}
  \begin{enumerate}
    \item[(i)]Every mapping whose graph is the union of finitely many closed convex sets is \ssstar at every point of its graph.
    \item[(ii)]Every mapping with closed subanalytic graph is \ssstar at every point of its graph.
  \end{enumerate}
\end{proposition}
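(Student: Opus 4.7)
My plan is to handle (i) and (ii) separately, reducing semismoothness$^*$ of $F$ to the set-version statement for its graph, namely that every normal vector arising along a tangential direction is orthogonal to that direction.

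For (i), write $\gph F=\bigcup_{i=1}^k C_i$ with each $C_i$ closed and convex, fix $(\xb,\yb)\in\gph F$ and a direction $(u,v)$, and take any $(a^*,b^*)$ in the directional limiting normal cone $N_{\gph F}((\xb,\yb);(u,v))$, realised as a limit of regular normals $(a^*_\ell,b^*_\ell)\in\widehat N_{\gph F}((\xb,\yb)+t_\ell(u_\ell,v_\ell))$ with $t_\ell\downarrow 0$ and $(u_\ell,v_\ell)\to(u,v)$. Each point $(\xb,\yb)+t_\ell(u_\ell,v_\ell)$ lies in some $C_{i(\ell)}$; finiteness lets me pass to a subsequence on which $i(\ell)=i$ is fixed, and closedness forces $(\xb,\yb)\in C_i$. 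Since $C_i\subseteq\gph F$ reverses tangent-cone (and hence regular-normal-cone) inclusions, and convexity collapses regular and limiting normals of $C_i$, I obtain $(a^*_\ell,b^*_\ell)\in N_{C_i}\bigl((\xb,\yb)+t_\ell(u_\ell,v_\ell)\bigr)$. Two applications of the convex normal-cone inequality then finish the job: tested at $p_\ell:=(\xb,\yb)+t_\ell(u_\ell,v_\ell)$ against $q=(\xb,\yb)\in C_i$ it yields $\skalp{(a^*_\ell,b^*_\ell),(u_\ell,v_\ell)}\ge 0$, so $\skalp{(a^*,b^*),(u,v)}\ge 0$ in the limit; outer semicontinuity of $N_{C_i}$ places $(a^*,b^*)\in N_{C_i}(\xb,\yb)$, and testing \emph{that} inequality against $q=p_\ell$ gives $\skalp{(a^*,b^*),(u,v)}\le 0$. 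Combining, $\skalp{(a^*,b^*),(u,v)}=0$, which is the \ssstar condition.

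For (ii) the plan is to reduce the problem to a one-dimensional calculation along a subanalytic curve. The curve selection lemma for subanalytic sets produces, for any $(a^*,b^*)\in N_{\gph F}((\xb,\yb);(u,v))$, a real-analytic arc $\gamma:[0,\epsilon)\to\gph F$ with $\gamma(0)=(\xb,\yb)$ and $\dot\gamma(0)$ collinear with $(u,v)$, together with an analytic selection $\eta(t)\in\widehat N_{\gph F}(\gamma(t))$ converging to $(a^*,b^*)$ as $t\downarrow 0$. Fix a Whitney stratification of $\gph F$ near $(\xb,\yb)$; for small $t>0$ the arc $\gamma(t)$ lies in a single smooth stratum $M$, and the local containment $M\subseteq\gph F$ forces $\eta(t)\perp T_{\gamma(t)}M$, while $\dot\gamma(t)\in T_{\gamma(t)}M$. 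Hence $\skalp{\eta(t),\dot\gamma(t)}\equiv 0$ for small $t>0$; a first-order Taylor estimate along $\gamma$ upgrades this to $\skalp{\eta(t),\gamma(t)-(\xb,\yb)}=\oo(t)$, so dividing by $t$ and passing to $t\downarrow 0$ delivers $\skalp{(a^*,b^*),(u,v)}=0$, the semismooth$^*$ condition at $(\xb,\yb)$.

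The genuinely easy step is (i), which rests solely on convex-analytic identities and the sign of the convex normal-cone inequality. The main obstacle is (ii): one must correctly assemble the curve selection lemma, the Whitney stratification, and the compatibility of $\widehat N_{\gph F}(\gamma(t))$ with the normal bundle of the stratum carrying $\gamma(t)$, and then verify that the $\oo(t)$ estimate on $\skalp{\eta(t),\gamma(t)-(\xb,\yb)}$ is strong enough to survive division by $t$ and passage to the limit.
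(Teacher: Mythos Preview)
Your proof of (i) is correct and self-contained; the paper itself does not argue this but simply refers to \cite[Propositions~3.4, 3.5]{GfrOut21}, and what you have written is essentially the argument behind that reference. The two applications of the convex normal-cone inequality---first at $p_\ell$ with test point $(\xb,\yb)$, then at $(\xb,\yb)$ with test point $p_\ell$---are exactly right.

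For (ii) the paper again only cites (\cite[Theorem~2]{Jou07} via \cite[Remark~3.10]{GfrOut21}), and your outline is indeed the skeleton of Jourani's proof. But as written it contains a genuine gap. The curve selection lemma, applied to $\gph F$ alone, gives you neither the analytic selection $\eta(t)\in\widehat N_{\gph F}(\gamma(t))$ with $\eta(t)\to(a^*,b^*)$ nor the prescribed tangent direction $\dot\gamma(0)\parallel(u,v)$; you have bundled three separate requirements into one invocation. To obtain them simultaneously you must (a) first verify that the conormal-type set $\{(p,n)\mv p\in\gph F,\ n\in\widehat N_{\gph F}(p)\cap\Sp^{2n-1}\}$ is itself subanalytic---this uses the stratification and is where the real work in \cite{Jou07} lies---then (b) apply curve selection in that product set intersected with a conical neighbourhood encoding the direction $(u,v)$, and finally (c) reparametrise so that the leading term of $\gamma$ is nonzero. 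You flag this assembly as the ``main obstacle'', but the sketch presents it as a ready-made lemma, which it is not.

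One minor simplification: once you have $\skalp{\eta(t),\dot\gamma(t)}=0$ for small $t>0$, the $\oo(t)$ manoeuvre is unnecessary---since $\eta(t)\to(a^*,b^*)$ and $\dot\gamma(t)\to\dot\gamma(0)$, passing to the limit gives $\skalp{(a^*,b^*),\dot\gamma(0)}=0$ directly.
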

\begin{proof}
The first assertion was already shown in \cite[Proposition 3.4, 3.5]{GfrOut21}. As mentioned in \cite[Remark 3.10]{GfrOut21}, the \ssstar property of sets amounts to the notion of semismoothness introduced in \cite{HO01}. It follows thus from \cite[Theorem 2]{Jou07},  that all closed subanalytic sets  are automatically \ssstar and the second statement holds by the definition of \ssstar mappings.
\end{proof}
The statement of Proposition \ref{PropSSstar}(ii) can be considered as the counterpart to \cite{BoDaLe09}, where it is shown that locally Lipschitz tame mappings $F:U\subseteq\R^n\to\R^m$ are semismooth in the sense of Qi and Sun \cite{QiSun93}.
In case of single-valued Lipschitzian mappings the \ssstar property is equivalent with the semismooth property introduced by Gowda \cite{Gow04}, which is weaker than the one in \cite{QiSun93}.

In the above definition the   \ssstar sets and mappings have been defined via directional limiting normal
cones and coderivatives. For our purpose it is convenient to make use of equivalent
characterizations in terms of standard (regular and limiting) normal cones and coderivatives,
respectively.

\begin{proposition}[{cf.\cite[Corollary 3.3]{GfrOut21}}]\label{PropCharSemiSmooth}Let $F:\R^n\tto\R^m$ and $(\xb,\yb)\in \gph F$ be given.
Then the following three statements are equivalent
\begin{enumerate}
\item[(i)] $F$ is \ssstar at $(\xb,\yb)$.
\item[(ii)] For every $\epsilon>0$ there is some $\delta>0$ such that
\begin{equation}\label{EqCharSemiSmoothReg}
\hspace{-1cm}\vert \skalp{x^*,x-\xb}-\skalp{y^*,y-\yb}\vert\leq \epsilon
\norm{(x,y)-(\xb,\yb)}\norm{(x^*,y^*)}\ \forall(x,y)\in \B_\delta(\xb,\yb)\ \forall
(y^*,x^*)\in\gph \widehat D^*F(x,y).\end{equation}
\item[(iii)] For every $\epsilon>0$ there is some $\delta>0$ such that
  \begin{equation}\label{EqCharSemiSmoothLim}
\hspace{-1cm}\vert \skalp{x^*,x-\xb}-\skalp{y^*,y-\yb}\vert\leq \epsilon
\norm{(x,y)-(\xb,\yb)}\norm{(x^*,y^*)}\ \forall(x,y)\in \B_\delta(\xb,\yb)\ \forall
(y^*,x^*)\in\gph D^*F(x,y).
\end{equation}
\end{enumerate}
\end{proposition}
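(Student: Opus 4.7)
My plan is to prove the cycle of implications (iii) $\Rightarrow$ (ii), (ii) $\Rightarrow$ (iii), (iii) $\Rightarrow$ (i), and (i) $\Rightarrow$ (ii). The first two are essentially routine passages between regular and limiting coderivatives at nearby base points, whereas the last two are what connect the uniform pointwise $\epsilon$-$\delta$ estimate to the infinitesimal directional-normal-cone definition of semismooth${}^*$.

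The implication (iii) $\Rightarrow$ (ii) is immediate from $\widehat N_{\gph F}(x,y)\subseteq N_{\gph F}(x,y)$, which gives $\gph \widehat D^*F(x,y)\subseteq \gph D^*F(x,y)$. For (ii) $\Rightarrow$ (iii) I would take an arbitrary $(y^*,x^*)\in\gph D^*F(x,y)$ with $(x,y)\in \gph F\cap \B_\delta(\xb,\yb)$, use the definition of the limiting normal cone to produce sequences $(x_k,y_k)\longsetto{\gph F}(x,y)$ and $(y^*_k,x^*_k)\to (y^*,x^*)$ with $(y^*_k,x^*_k)\in \gph \widehat D^*F(x_k,y_k)$, apply (ii) at each $k$ (valid since $(x_k,y_k)\in \B_\delta(\xb,\yb)$ for $k$ large enough), and pass to the limit.

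For (iii) $\Rightarrow$ (i), I would take $(y^*,x^*)\in \gph D^*F((\xb,\yb);(u,v))$ and unfold the definition of the directional limiting normal cone to obtain $t_k\downarrow 0$, $(u_k,v_k)\to (u,v)$ and $(y^*_k,x^*_k)\to (y^*,x^*)$ with $(y^*_k,x^*_k)\in \gph \widehat D^*F((\xb,\yb)+t_k(u_k,v_k))\subseteq \gph D^*F((\xb,\yb)+t_k(u_k,v_k))$. Applying (iii) at the points $(x_k,y_k):=(\xb,\yb)+t_k(u_k,v_k)\in\gph F$ and using $(x_k,y_k)-(\xb,\yb)=t_k(u_k,v_k)$, I would divide through by $t_k$ and send $k\to\infty$ to obtain $|\skalp{x^*,u}-\skalp{y^*,v}|\leq \epsilon\norm{(u,v)}\norm{(x^*,y^*)}$; since $\epsilon$ is arbitrary, this yields $\skalp{x^*,u}=\skalp{y^*,v}$, i.e.\ the semismooth${}^*$ identity.

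The main obstacle is (i) $\Rightarrow$ (ii), which I would handle by contradiction. Supposing (ii) fails with some $\epsilon>0$, I would extract $(x_k,y_k)\in\gph F$ with $(x_k,y_k)\to(\xb,\yb)$ and $(y^*_k,x^*_k)\in\gph \widehat D^*F(x_k,y_k)$ violating the inequality. Setting $t_k:=\norm{(x_k,y_k)-(\xb,\yb)}>0$, $(u_k,v_k):=((x_k,y_k)-(\xb,\yb))/t_k$, and rescaling $(y^*_k,x^*_k)$ to unit length (permissible because the regular coderivative is a cone), I would pass to convergent subsequences with unit limits $(u,v)$ and $(y^*,x^*)$; dividing the failure inequality through by $t_k\norm{(x^*_k,y^*_k)}$ and taking limits yields $|\skalp{x^*,u}-\skalp{y^*,v}|\geq \epsilon$. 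On the other hand, the relations $(x^*_k,-y^*_k)\in\widehat N_{\gph F}((\xb,\yb)+t_k(u_k,v_k))$ together with $t_k\downarrow 0$, $(u_k,v_k)\to(u,v)$, and $(x^*_k,-y^*_k)\to (x^*,-y^*)$ are exactly the data required to conclude $(x^*,-y^*)\in N_{\gph F}((\xb,\yb);(u,v))$, i.e.\ $(y^*,x^*)\in\gph D^*F((\xb,\yb);(u,v))$; then (i) forces $\skalp{x^*,u}=\skalp{y^*,v}$, contradicting the lower bound $\epsilon$. The delicate point is precisely this rescale-and-compactness step, which converts a failure of the uniform pointwise estimate at nearby points into a bona fide element of the directional limiting coderivative at the reference point.
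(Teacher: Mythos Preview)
The paper does not supply its own proof of this proposition; it simply quotes the result from \cite[Corollary 3.3]{GfrOut21} (and in fact the proposition sits inside an \verb|\if{...}\fi| block that is excised from the compiled version). So there is nothing in the present paper to compare your argument against.

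That said, your argument is the standard one and is correct. The cycle (iii)$\Rightarrow$(ii) via $\widehat N\subseteq N$, (ii)$\Rightarrow$(iii) via approximating limiting normals by regular normals at nearby graph points, (iii)$\Rightarrow$(i) by inserting $(x_k,y_k)=(\xb,\yb)+t_k(u_k,v_k)$ and dividing by $t_k$, and (i)$\Rightarrow$(ii) by the rescale-and-compactness contradiction producing an element of $N_{\gph F}((\xb,\yb);(u,v))$, is exactly how the original proof in \cite{GfrOut21} proceeds. The ``delicate point'' you flag in (i)$\Rightarrow$(ii) is handled precisely as you describe: normalizing both $(u_k,v_k)$ and $(x_k^*,y_k^*)$ to the unit sphere, extracting convergent subsequences, and reading off membership in the directional limiting normal cone directly from its $\Limsup$ definition.

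One cosmetic remark on (ii)$\Rightarrow$(iii): when $(x,y)$ lies on the boundary of $\B_\delta(\xb,\yb)$, the approximating sequence $(x_k,y_k)\to(x,y)$ need not stay inside $\B_\delta(\xb,\yb)$. The clean fix is the usual one: given $\epsilon>0$, invoke (ii) to obtain $\delta>0$, and then verify (iii) on $\B_{\delta/2}(\xb,\yb)$ (or use open balls). This is implicit in your phrase ``for $k$ large enough'' but worth making explicit.
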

}\fi

\section{On \SCD mappings}
\subsection{Basic properties}
In this section we want to recall the basic definitions and features  of the \SCD property  introduced in the recent paper \cite{GfrOut21a}.

In what follows we denote by $\Z_n$ the metric space of all $n$-dimensional subspaces of $\R^{2n}$ equipped with the metric
\[d_\Z(L_1,L_2):=\norm{P_{L_1}-P_{L_2}}\]
where $P_{L_i}$ is the symmetric $2n\times 2n$ matrix representing the orthogonal projection on $L_i$, $i=1,2$.

Sometimes we will also work with bases for the subspaces $L\in\Z_n$. Let $\M_n$ denote the collection of all $2n\times n$ matrices with full column rank $n$ and for $L\in \Z_n$ we define
\[\M(L):=\{Z\in \M_n\mv \rge Z =L\},\]
i.e., the columns of $Z\in\M(L)$ are a basis for $L$.

We treat every element of $\R^{2n}$ as a column vector. In order to keep notation simple we write $(u,v)$ instead of $\myvec{u\\v}\in\R^{2n}$ when this does not lead to confusion. In order to refer to the components of the vector $z=\myvec{u\\ v}$ we set $\pi_1(z):=u,\ \pi_2(z)=v$.

Let $L\in\Z_n$ and consider $Z\in \M(L)$. Then we can partition $Z$ into two $n\times n$ matrices $A$ and $B$ and we will write $Z=(A,B)$ instead of $Z=\myvec{A\\B}$. It follows that $\rge(A,B):=\{(Au,Bu)\mv u\in\R^n\}\doteq \{\myvec{Au\\Bu}\mv u\in\R^n\}=L$. Similarly as before, we will also use $\pi_1(Z):=A$, $\pi_2(Z):=B$ for referring to the two $n\times n$ parts of $Z$.

Further, for every $L\in \Z_n$ we can define
\begin{align*}
  L^*&:=\{(-v^*,u^*)\mv (u^*,v^*)\in L^\perp\},
\end{align*}
where $L^\perp$ denotes as usual the orthogonal complement of $L$. Then it can be shown that $(L^*)^*=L$ and $d_\Z(L_1,L_2)=d_\Z(L_1^*,L_2^*)$. Thus the mapping $L\to L^*$ defines an isometry on $\Z_n$.

We denote by $S_n$ the $2n\times 2n$ orthogonal matrix
\[S_n:=\begin{pmatrix}0&-I\\I&0\end{pmatrix}\]
so that $L^*=S_nL^\perp$.

\begin{definition}\label{DefSCDProperty}
  Consider a mapping  $F:R^n\tto\R^n$.
  \begin{enumerate}
    \item We call $F$ {\em graphically smooth of dimension $n$} at $(x,y)\in \gph F$, if $T_{\gph F}(x,y)=\gph DF(x,y)\in \Z_n$. Further we denote by $\OO_F$ the set of all points where $F$ is graphically smooth of dimension $n$.
    \item We associate with $F$ the four mappings $\widehat\Sp F:\gph F\tto \Z_n$, $\widehat\Sp^* F:\gph F\tto \Z_n$, $\Sp F:\gph F\tto \Z_n$, $\Sp^* F:\gph F\tto \Z_n$, given by
    \begin{align*}\widehat\Sp F(x,y)&:=\begin{cases}\{\gph DF(x,y)\}& \mbox{if $(x,y)\in\OO_F$,}\\
    \emptyset&\mbox{else,}\end{cases}\\
    \widehat\Sp^* F(x,y)&:=\begin{cases}\{\gph DF(x,y)^*\}& \mbox{if $(x,y)\in\OO_F$,}\\
    \emptyset&\mbox{else,}\end{cases}\\
    \Sp F(x,y)&:=\Limsup_{(u,v)\longsetto{{\gph F}}(x,y)} \widehat\Sp F(u,v) \\
    &=\{L\in \Z_n\mv \exists (x_k,y_k)\longsetto{{\OO_F}}(x,y):\ \lim_{k\to\infty} d_\Z(L,\gph DF(x_k,y_k))=0\},\\
    \Sp^* F(x,y)&:=\Limsup_{(u,v)\longsetto{{\gph F}}(x,y)} \widehat\Sp^* F(u,v)\\
    &=\{L\in \Z_n\mv \exists (x_k,y_k)\longsetto{{\OO_F}}(x,y):\ \lim_{k\to\infty} d_\Z(L,\gph DF(x_k,y_k)^*)=0\}.
    \end{align*}
    \item We say that $F$ has the {\em\SCD (subspace containing derivative) property at} $(x,y)\in\gph F$, if $\Sp^*F(x,y)\not=\emptyset$. We say that $F$ has the \SCD property {\em around} $(x,y)\in\gph F$, if there is a neighborhood $W$ of $(x,y)$ such that $F$ has the \SCD property at every $(x',y')\in\gph F\cap W$.
     Finally, we call $F$ an {\em \SCD mapping} if
    $F$ has the \SCD property at every point of its graph.
  \end{enumerate}
\end{definition}
Since $L\to L^*$ is an isometry on $\Z_n$ and $(L^*)^*=L$, the mappings $\Sp^*F$ and $\Sp F$ are related via
  \[\Sp^* F(x,y)=\{L^*\mv L\in \Sp F(x,y)\},\ \Sp F(x,y)=\{L^*\mv L\in \Sp^* F(x,y)\}.\]
The name \SCD property is motivated by the following statement.
\begin{lemma}[cf.{\cite[Lemma 3.7]{GfrOut21a}}] Let $F:\R^n\tto\R^n$ and let $(x,y)\in \gph F$. Then $L\subseteq \gph D^*F(x,y)$ $\forall L\in \Sp^*F(x,y)$.
\end{lemma}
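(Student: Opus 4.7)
The plan is to unwind the $*$-operation at graphically smooth points, use convergence of orthogonal projections implied by the metric $d_\Z$ to build approximating sequences of regular normals, and then pass to the limiting normal cone.

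First I would verify the following reformulation at any $(u,v)\in\OO_F$: since $T_{\gph F}(u,v)=\gph DF(u,v)$ is an $n$-dimensional subspace, its polar is its orthogonal complement, so $\widehat N_{\gph F}(u,v)=\gph DF(u,v)^\perp$. Unpacking the definitions of $\gph\widehat D^*F(u,v)$ and of $L^*$ then gives the identity
\[
\gph\widehat D^*F(u,v)=\gph DF(u,v)^*,
\]
i.e.\ $(y^*,x^*)\in\gph DF(u,v)^*$ iff $(x^*,-y^*)\in\widehat N_{\gph F}(u,v)$. This is the bridge that connects the subspace object $\Sp^*F(x,y)$ to regular (and hence limiting) normals.

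Next, fix $L\in\Sp^*F(x,y)$. By definition of $\Sp^*F$ there exists a sequence $(x_k,y_k)\longsetto{\gph F}(x,y)$ with $(x_k,y_k)\in\OO_F$ such that the subspaces $L_k:=\gph DF(x_k,y_k)^*\in\Z_n$ satisfy $d_\Z(L_k,L)\to 0$; equivalently, the orthogonal projections satisfy $P_{L_k}\to P_L$ in the operator norm. Given any $(y^*,x^*)\in L$, set $(y^*_k,x^*_k):=P_{L_k}(y^*,x^*)\in L_k$. Then
\[
(y^*_k,x^*_k)-(y^*,x^*)=(P_{L_k}-P_L)(y^*,x^*)\to 0,
\]
so $(y^*_k,x^*_k)\to(y^*,x^*)$. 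By the first step, $(y^*_k,x^*_k)\in\gph\widehat D^*F(x_k,y_k)$, which means $(x^*_k,-y^*_k)\in\widehat N_{\gph F}(x_k,y_k)$.

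Finally, since $(x_k,y_k)\to(x,y)$ in $\gph F$ and $(x^*_k,-y^*_k)\to(x^*,-y^*)$, the definition of the limiting normal cone as the outer limit of regular normal cones yields $(x^*,-y^*)\in N_{\gph F}(x,y)$, i.e.\ $(y^*,x^*)\in\gph D^*F(x,y)$. As $(y^*,x^*)\in L$ was arbitrary, this establishes $L\subseteq\gph D^*F(x,y)$.

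The only nonroutine step is the first one—keeping the four formally different $*$/$\perp$ conventions straight so that the identity $\gph\widehat D^*F(u,v)=\gph DF(u,v)^*$ at graphically smooth points is verified without sign errors; everything after that is a standard projection-plus-outer-limit argument.
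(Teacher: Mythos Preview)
Your argument is correct. The paper does not actually prove this lemma; it merely cites \cite[Lemma 3.7]{GfrOut21a}. Your approach---identifying $\gph DF(u,v)^*$ with the graph of the regular coderivative at graphically smooth points, then using convergence of projections together with the outer-limit definition of the limiting normal cone---is precisely the natural route and is presumably what the cited reference does as well. The verification of the identity $\gph\widehat D^*F(u,v)=\gph DF(u,v)^*$ at $(u,v)\in\OO_F$ is straightforward once one observes that for a subspace the polar equals the orthogonal complement, so $\widehat N_{\gph F}(u,v)=\gph DF(u,v)^\perp$, and then both sides unwind to ``$(x^*,-y^*)\in\gph DF(u,v)^\perp$''. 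One minor remark: you invoke the regular coderivative $\widehat D^*F$, which the paper does not explicitly define; it suffices (and is cleaner in this context) to work directly with $\widehat N_{\gph F}$ and the definition of $D^*F$ via $N_{\gph F}$, bypassing the intermediate symbol.
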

Next we turn to the notion of \SCD regularity.
\begin{definition}
\begin{enumerate}
\item We denote by $\Z_n^{\rm reg}$ the collection of all subspaces $L\in\Z_n$ such that
  \begin{equation*}
    (y^*,0)\in L\ \Rightarrow\ y^*=0.
  \end{equation*}
  \item  A mapping $F:\R^n\tto\R^n$ is called {\em \SCD regular around} $(x,y)\in\gph F$, if $F$ has the \SCD property around $(x,y)$ and
  \begin{equation}\label{EqSCDReg}
    (y^*,0)\in L \Rightarrow\ y^*=0\ \forall L\in \Sp^*F(x,y),
  \end{equation}
  i.e., $L\in \Z_n^{\rm reg}$ for all $L\in \Sp^*F(x,y)$. Further, we will denote by
  \[{\rm scd\,reg\;}F(x,y):=\sup\{\norm{y^*}\mv (y^*,x^*)\in L, L\in \Sp^*F(x,y), \norm{x^*}\leq 1\}\] 
  the {\em modulus of \SCD regularity} of $F$ around $(x,y)$.
\end{enumerate}
\end{definition}
Since the elements of $\Sp^*F(x,y)$ are contained in $\gph D^*F(x,y)$, it follows from the Mordukhovich criterion \eqref{EqMoCrit} that \SCD regularity is weaker than metric regularity.

In the following propostion we state some basic properties of subspaces $L\in\Z_n^{\rm reg}$.
\begin{proposition}[cf.{\cite[Proposition 4.2]{GfrOut21a}}] \label{PropC_L}
    Given a $2n\times n$ matrix $Z$, there holds $\rge Z\in \Z_n^{\rm reg}$ if and only if the $n\times n$ matrix $\pi_2(Z)$ is nonsingular. Thus,
    for every $L\in \Z_n^{\rm reg}$   there is a  unique $n\times n$ matrix $C_L$  such that $L=\rge(C_L,I)$. Further, $L^*=\rge(C_L^T,I)\in\Z_n^{\rm reg}$,
    \begin{equation*}
    \skalp{x^*,C_L^Tv}=\skalp{y^*,v}\ \forall (y^*,x^*)\in L\forall v\in\R^n.
    \end{equation*}
    and
    \begin{equation*}
    \norm{y^*}\leq \norm{C_L}\norm{x^*}\ \forall (y^*,x^*)\in L.
  \end{equation*}
\end{proposition}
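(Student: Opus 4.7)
The whole proposition is essentially a clean piece of linear algebra on two-by-one block matrices, so my plan is to unpack the definitions and then compute. Writing $Z=(A,B)$ with $A,B\in\R^{n\times n}$, the range is $\{(Au,Bu)\mv u\in\R^n\}$, and since $Z$ has full column rank, $\rge Z\in\Z_n$ automatically. The condition $(y^*,0)\in\rge Z$ translates to the existence of $u$ with $Bu=0$ and $Au=y^*$; thus $\rge Z\in\Z_n^{\rm reg}$ is equivalent to $\ker B\subseteq\ker A$. I would then observe that full column rank of $Z$ means $\ker A\cap\ker B=\{0\}$, so the inclusion $\ker B\subseteq\ker A$ forces $\ker B=\{0\}$, i.e.\ $\pi_2(Z)=B$ nonsingular; conversely, nonsingularity of $B$ trivially gives the inclusion. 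This settles the first equivalence.

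For the existence of $C_L$, given $L\in\Z_n^{\rm reg}$ I pick any basis $Z=(A,B)$ and set $C_L:=AB^{-1}$; then $(A,B)=(C_L B, B)$ and $\rge(A,B)=\rge(C_L,I)$ because $u\mapsto Bu$ is a bijection on $\R^n$. Uniqueness is a one-line check: if $\rge(C,I)=\rge(C',I)$, then for every $v\in\R^n$ the pair $(Cv,v)$ must equal some $(C'u,u)$, which forces $u=v$ and $Cv=C'v$.

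For the formula $L^*=\rge(C_L^T,I)$, I compute $L^\perp$ directly. A pair $(u^*,v^*)$ lies in $L^\perp$ iff $\skalp{u^*,C_L v}+\skalp{v^*,v}=0$ for every $v\in\R^n$, which is equivalent to $v^*=-C_L^T u^*$. Hence $L^\perp=\{(u^*,-C_L^T u^*)\mv u^*\in\R^n\}$, and plugging this into the definition $L^*=\{(-v^*,u^*)\mv(u^*,v^*)\in L^\perp\}$ gives $L^*=\{(C_L^T u^*,u^*)\mv u^*\in\R^n\}=\rge(C_L^T,I)$. Since the second block is the identity, $L^*\in\Z_n^{\rm reg}$ by the equivalence already proved.

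Finally, any $(y^*,x^*)\in L=\rge(C_L,I)$ must be of the form $(C_L x^*,x^*)$, i.e.\ $y^*=C_L x^*$. The identity $\skalp{x^*,C_L^T v}=\skalp{C_L x^*,v}=\skalp{y^*,v}$ is then just the definition of the transpose, and the norm bound $\norm{y^*}=\norm{C_L x^*}\le\norm{C_L}\norm{x^*}$ is immediate from the definition of the spectral norm. I do not anticipate a genuine obstacle; the only thing one has to be slightly attentive to is matching the sign and order convention in the definition of $L^*$, so that $L^*$ comes out as $\rge(C_L^T,I)$ rather than, say, $\rge(-C_L^T,I)$.
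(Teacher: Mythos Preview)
Your argument is correct and complete; the only cosmetic point is that the proposition is stated for an arbitrary $2n\times n$ matrix $Z$, so in the backward direction you should remark that nonsingularity of $B=\pi_2(Z)$ already forces $Z$ to have full column rank (hence $\rge Z\in\Z_n$), rather than assuming this up front---but this is trivial. Note that the paper itself does not give a proof of this proposition: it is quoted from \cite[Proposition 4.2]{GfrOut21a}, so there is no in-paper argument to compare against. Your direct linear-algebra computation is exactly the natural way to establish the result.
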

Note that for every $L\in\Z_n^{\reg}$ and every $(A,B)\in \M(L)$ the matrix $B$ is nonsingular and $C_L=AB^{-1}$.

Combining \cite[Equation  (34), Lemma 4.7, Proposition 4.8]{GfrOut21a} we obtain the following lemma
\begin{lemma}\label{LemSCDReg}
   Assume that $F:\R^n\tto\R^n$ is \SCD regular around $(\xb,\yb)\in\gph F$. Then
   \[{\rm scd\,reg\;}F(\xb,\yb)=\sup\{\norm{C_L}\mv L\in\Sp^*F(\xb,\yb)\}<\infty\]
   Moreover, $F$ is \SCD regular around every $(x,y)\in\gph F$ sufficiently close to $(\xb,\yb)$ and
  \[\limsup_{(x,y)\longsetto{\gph F}(\xb,\yb)}{\rm scd\,reg\;}F(x,y)\leq{\rm scd\,reg\;}F(\xb,\yb).\]
\end{lemma}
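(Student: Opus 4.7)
The plan is to combine three ingredients: the explicit parametrization of subspaces in $\Z_n^{\rm reg}$ via the matrices $C_L$ (Proposition 3.4 above), the compactness and topological structure of $\Z_n$, and the outer semicontinuity of $\Sp^*F$ built into its definition as a Painlev\'e--Kuratowski outer limit.

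First I would establish the explicit formula for $\scdreg F(\xb,\yb)$. Under \SCD regularity, every $L\in \Sp^*F(\xb,\yb)$ lies in $\Z_n^{\rm reg}$, so by Proposition 3.4 one may write $L=\rge(C_L,I)$, i.e.\ the elements of $L$ are exactly the pairs $(C_Lv,v)$ with $v\in\R^n$. Substituting this parametrization into the defining formula
\[\scdreg F(\xb,\yb)=\sup\{\norm{y^*}\mv (y^*,x^*)\in L,\ L\in\Sp^*F(\xb,\yb),\ \norm{x^*}\leq 1\},\]
one sees that the inner supremum over $(y^*,x^*)\in L$ with $\norm{x^*}\leq 1$ equals $\norm{C_L}$, so the right-hand side collapses to $\sup\{\norm{C_L}\mv L\in\Sp^*F(\xb,\yb)\}$. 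This is exactly the role of Eq.~(34) in \cite{GfrOut21a}.

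Next I would argue finiteness and local preservation. The metric space $\Z_n$ is compact (it is the Grassmannian $G(n,2n)$ equipped with the projector metric), and the subset $\Z_n^{\rm reg}$ is open because, by Proposition 3.4, membership in $\Z_n^{\rm reg}$ is equivalent to nonsingularity of the lower $n\times n$ block of any basis matrix, a condition stable under small perturbations of $L$ in the projector metric. Moreover, on $\Z_n^{\rm reg}$ the assignment $L\mapsto C_L=\pi_1(Z)\pi_2(Z)^{-1}$ (for any $Z\in\M(L)$) is continuous. Since $\Sp^*F(\xb,\yb)$ is, by its very definition as an outer limit, a closed subset of the compact space $\Z_n$, it is itself compact; it is contained in the open set $\Z_n^{\rm reg}$ by \SCD regularity. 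Continuity of $L\mapsto\norm{C_L}$ on $\Z_n^{\rm reg}$ combined with compactness then yields both finiteness of $\scdreg F(\xb,\yb)$ and the attainment of the supremum. For nearby points, the outer semicontinuity of $\Sp^*F$ at $(\xb,\yb)$ (inherited from its definition as an outer limit together with the nested monotonicity $\Sp^*F(x,y)\subseteq \Sp^*F(\xb,\yb)+\mbox{arbitrarily small ball}$ for $(x,y)$ near $(\xb,\yb)$ on $\gph F$) forces $\Sp^*F(x,y)\subseteq \Z_n^{\rm reg}$ for all $(x,y)\in\gph F$ sufficiently close to $(\xb,\yb)$, establishing \SCD regularity on a neighborhood.

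For the outer limit inequality, I would argue by contradiction. Suppose there exist $(x_k,y_k)\longsetto{\gph F}(\xb,\yb)$ with $\scdreg F(x_k,y_k)>\scdreg F(\xb,\yb)+\epsilon$ for some $\epsilon>0$ and all $k$. Pick $L_k\in\Sp^*F(x_k,y_k)$ with $\norm{C_{L_k}}>\scdreg F(\xb,\yb)+\epsilon/2$. By compactness of $\Z_n$ a subsequence $L_k$ converges to some $L\in\Z_n$, and by outer semicontinuity of $\Sp^*F$ we have $L\in\Sp^*F(\xb,\yb)\subseteq\Z_n^{\rm reg}$. Continuity of $L\mapsto C_L$ on $\Z_n^{\rm reg}$ then gives $\norm{C_L}\geq\scdreg F(\xb,\yb)+\epsilon/2$, contradicting the formula of the first step. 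The main obstacle here is the delicate interplay between the compactness of $\Z_n$, the openness of $\Z_n^{\rm reg}$, and the fact that the map $L\mapsto C_L$ blows up as $L$ approaches $\Z_n\setminus \Z_n^{\rm reg}$; once one is sure that the limit $L$ stays in $\Z_n^{\rm reg}$ (which is precisely what outer semicontinuity plus \SCD regularity at $(\xb,\yb)$ guarantees), the continuity argument goes through cleanly.
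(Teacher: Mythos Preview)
Your argument is correct. The paper does not give a self-contained proof of this lemma; it simply records that the statement follows by combining \cite[Equation~(34), Lemma~4.7, Proposition~4.8]{GfrOut21a}. Your write-up reconstructs exactly the content of those citations: Equation~(34) is the identity $\scdreg F(\xb,\yb)=\sup\{\norm{C_L}\mv L\in\Sp^*F(\xb,\yb)\}$ obtained from the parametrization $L=\rge(C_L,I)$ of Proposition~\ref{PropC_L}; Lemma~4.7 is the outer semicontinuity of $\Sp^*F$ on $\gph F$ (an automatic consequence of its definition as a Painlev\'e--Kuratowski outer limit); and Proposition~4.8 packages the compactness of the Grassmannian $\Z_n$, the openness of $\Z_n^{\rm reg}$, and the continuity of $L\mapsto C_L$ on $\Z_n^{\rm reg}$ into the finiteness and $\limsup$ statements, precisely along the lines of your contradiction argument. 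So your approach and the paper's (cited) approach coincide.
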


\subsection{
On the \SCD property of the subdifferential  of convex functions}

\begin{theorem}[cf.{\cite[Corollary 3.28]{GfrOut21a}}]\label{ThSubdiffConv}
  For every proper lsc convex function $q:\R^n\to\bar\R$ the subdifferential mapping $\partial q$ is   an \SCD mapping and for every $(x,x^*)\in \gph\partial q$ and for every $L\in \Sp^*\partial q(x,x^*)=\Sp\partial q(x,x^*)$ there is a symmetric positive semidefinite $n\times n$ matrix $B$ with $\norm{B}\leq 1$ such that $L=\rge(B,I-B)=L^*$.
\end{theorem}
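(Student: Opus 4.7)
The proof rests on the Minty parametrization of $\gph\partial q$. Let $J_q:=(I+\partial q)^{-1}$ be the resolvent, which by the Minty--Rockafellar theorem is single-valued and $1$-Lipschitz on $\R^n$. The map $\phi:\R^n\to\gph\partial q$, $\phi(z):=(J_q(z),\,z-J_q(z))$, is a Lipschitz bijection with linear inverse $(x,x^*)\mapsto x+x^*$. By Rademacher's theorem $J_q$ is Fr\'echet differentiable on a full-measure set $\Omega\subseteq\R^n$; for $z\in\Omega$ denote $B(z):=DJ_q(z)$, and observe that the $2n\times n$ Jacobian $D\phi(z)=(B(z),\,I-B(z))$ has full column rank $n$ (if $B(z)u=0$ and $(I-B(z))u=0$ then $u=0$). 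Standard tangent-cone calculus for the image of a Lipschitz chart with Lipschitz inverse then yields $T_{\gph\partial q}(\phi(z))=\rge(B(z),I-B(z))\in\Z_n$, so $\phi(z)\in\OO_{\partial q}$. In particular, graphically smooth points are dense in $\gph\partial q$.

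The technical heart is to show that at \emph{every} $(x_0,x^*_0)\in\OO_{\partial q}$ the tangent subspace $L:=T_{\gph\partial q}(x_0,x^*_0)$ has the form $\rge(F,I-F)$ with $F$ symmetric positive semidefinite and $\norm{F}\leq 1$. Monotonicity of $\partial q$ forces $\skalp{v,u}\geq 0$ on $L$ and excludes any nontrivial $(u,-u)\in L$, so, by dimension, $L=\rge(F,I-F)$ for a unique $n\times n$ matrix $F$. To extract symmetry of $F$, approximate any $(u_i,v_i)\in L$, $i=1,2$, by sequences $(x^i_k,x^{*,i}_k)=(x_0,x^*_0)+t_k(u_i,v_i)+o(t_k)$ in $\gph\partial q$ and apply the three-cycle monotonicity inequality; after division by $t_k^2$ and passage to the limit one obtains
\[
  \skalp{v_1,u_2}\leq\skalp{v_1,u_1}+\skalp{v_2,u_2}\quad\text{for all }(u_i,v_i)\in L.
\]
Substituting $(u_i,v_i)=(Fw_i,(I-F)w_i)$ and setting $M:=(I-F)^TF$ turns this into $\skalp{w_1,Mw_2}\leq\skalp{w_1,Mw_1}+\skalp{w_2,Mw_2}$; subtracting the version obtained by swapping $w_1\leftrightarrow w_2$ and then replacing $w_1\mapsto -w_1$ gives $\skalp{w_1,(M-M^T)w_2}=0$ for all $w_1,w_2$. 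Hence $M$, and therefore $F$, is symmetric. Symmetry together with the monotonicity bound $\skalp{(I-F)w,Fw}\geq 0$ yields $F(I-F)\succeq 0$, equivalently $0\preceq F\preceq I$, so $\norm{F}\leq 1$. A direct computation (using the fact that $F$ and $I-F$ commute) further shows $\rge(F,I-F)=\rge(F,I-F)^*$ as soon as $F=F^T$.

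To conclude, fix $(x,x^*)\in\gph\partial q$, set $z:=x+x^*$, and choose $z_k\in\Omega$ with $z_k\to z$. The matrices $B_k:=B(z_k)$ lie in the compact set $\{B=B^T:\,0\preceq B\preceq I\}$, so after extraction one has $B_k\to B$ and consequently $\rge(B_k,I-B_k)\to\rge(B,I-B)$ in $d_\Z$. Since each $\rge(B_k,I-B_k)$ already equals its own star, this limit lies simultaneously in $\Sp\partial q(x,x^*)$ and $\Sp^*\partial q(x,x^*)$, proving the \SCD property. An arbitrary $L\in\Sp^*\partial q(x,x^*)$ is by definition a $d_\Z$-limit of (stars of) tangent subspaces $\rge(F_k,I-F_k)$ at points of $\OO_{\partial q}$; by the previous paragraph each $F_k$ is symmetric PSD with $\norm{F_k}\leq 1$ and $\rge(F_k,I-F_k)^*=\rge(F_k,I-F_k)$, and closedness of this matrix class under convergence delivers the asserted representation $L=\rge(B,I-B)=L^*$ together with the identity $\Sp\partial q(x,x^*)=\Sp^*\partial q(x,x^*)$. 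The principal obstacle is the cyclic-monotonicity step that upgrades the positive semidefiniteness of the symmetric part of $F$ (guaranteed by plain monotonicity) to the full symmetry of $F$; everything else consists of Rademacher differentiability, a compactness argument in the cone of PSD contractions, and routine tangent-cone bookkeeping.
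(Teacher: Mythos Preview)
The paper does not give a proof of this theorem; it is imported verbatim from \cite[Corollary~3.28]{GfrOut21a}. Your overall architecture---Minty parametrization $\phi(z)=(J_q(z),z-J_q(z))$, Rademacher differentiability of the nonexpansive resolvent, identification of $\OO_{\partial q}$ with the Fr\'echet differentiability points of $J_q$, and a compactness argument in $\{B=B^T:\ 0\preceq B\preceq I\}$---is correct and is essentially the route taken in the cited source.

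There is, however, a genuine gap in your symmetry step. The three--cycle inequality you derive,
\[
\skalp{w_1,Mw_2}\leq\skalp{w_1,Mw_1}+\skalp{w_2,Mw_2}\quad\forall w_1,w_2,
\]
does \emph{not} force $M=(I-F)^TF$ (equivalently $F$) to be symmetric, and the algebra you indicate (``subtract the swapped version, then replace $w_1\mapsto-w_1$'') cannot work: you are subtracting two inequalities that point the same way and share the same right--hand side. A concrete obstruction in $\R^2$: take $F=\tfrac15\left(\begin{smallmatrix}2&-1\\1&2\end{smallmatrix}\right)$, so that $L=\rge(F,I-F)$ is the graph of $A=(I-F)F^{-1}=I+\left(\begin{smallmatrix}0&1\\-1&0\end{smallmatrix}\right)$. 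This $A$ is $3$--cyclically monotone (your inequality reduces to $\skalp{u_1,u_2}+\skalp{Ku_1,u_2}\leq\norm{u_1}^2+\norm{u_2}^2$ with $K$ the rotation generator, whose left side is at most $\sqrt2\,\norm{u_1}\norm{u_2}$), yet $F$ is not symmetric. For linear relations, $3$--cyclic monotonicity is strictly weaker than full cyclic monotonicity; only the latter characterizes symmetric positive semidefinite operators.

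The clean repair is to avoid cyclic monotonicity altogether: by Moreau's theorem $J_q=\nabla e_1q$, where $e_1q$ is the $C^{1,1}$ convex Moreau envelope. You have effectively shown that $(x,x^*)\in\OO_{\partial q}$ if and only if $J_q$ is Fr\'echet differentiable at $z=x+x^*$, with $F=DJ_q(z)$. Now invoke the classical fact that if a real--valued function is $C^1$ near a point and its gradient is Fr\'echet differentiable there, then that derivative is symmetric (a double mean--value argument, e.g.\ Dieudonn\'e). This gives $F=F^T$; convexity of $e_1q$ yields $F\succeq0$, and nonexpansiveness of $J_q$ yields $\norm{F}\leq1$. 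With this replacement, the remainder of your argument (including $L=L^*$ and the passage to limits) goes through unchanged.
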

The representation of $L$ via the matrix $B$ is only one possibility. E.g., if $q$ is twice continuously differentiable then $\rge(I,\nabla^2 q(x))=\gph D^*\partial q(x,\nabla q(x))$ and the relation between $B$ and $\nabla^2q(x)$ is given by $B=(I+\nabla^2 q(x))^{-1}$, $I-B=(I+\nabla^2 q(x))^{-1}\nabla^2 q(x)$ and $\nabla^2 q(x)=B^{-1}(I-B)$.
\begin{example}
  Assume that $q(x)=\norm{x}$ so that
  \[\partial q(x)=\begin{cases}\B&\mbox{for $x=0$}\\\frac x{\norm{x}}&\mbox{otherwise.}\end{cases}\]
  By virtue of Theorem \ref{ThSubdiffConv}, $\partial q$ is an \SCD mapping. When considering a pair $(\xb,\xba)\in\gph\partial q$ with $\xb=0$ and $\norm{\xba}<1$, then it is easy to see that $\partial q$ is graphically smooth of dimension $n$ at $(\xb,\xba)$ and, by Definition \ref{DefSCDProperty},
  \[\Sp\partial q(\xb,\xba)=\Sp^*\partial q(\xb,\xba)=\big\{\{0\}\times\R^n\big\}.\]
  In this case we have the representation $\{0\}\times\R^n=\rge(B,I-B)$ with $B=0$.
  If $x\not=0$ then $q$ is even twice continuously differentiable near $x$ and, as pointed out below Theorem \ref{ThSubdiffConv}, with $\xba=\frac x{\norm{x}}$ one has
  \[\Sp\partial q(\xb,\xba)=\Sp^*\partial q(\xb,\xba)=\rge(B_x,I-B_x)=\rge(I,\nabla^2 q(x))\]
  with
  \begin{align*}
    B_x=\big(I+\nabla^2 q(x)\big)^{-1}=\left(I+\frac1{\norm{x}}\Big(I-\frac {xx^T}{\norm{x}^2} \Big)\right)^{-1}=\left(\frac{\norm{x}+1}{\norm{x}}\Big(I-\frac{xx^T}{\norm{x}^2(1+\norm{x})}\Big)\right)^{-1}.
  \end{align*}
  We claim that
  \begin{equation}\label{EqB_x}B_x=\frac{\norm{x}}{\norm{x}+1}\Big(I+\frac{xx^T}{\norm{x}^3}\Big).\end{equation}
  Indeed,
  \begin{align*}\Big(I-\frac{xx^T}{\norm{x}^2(1+\norm{x})}\Big)\Big(I+\frac{xx^T}{\norm{x}^3}\Big)&= I+xx^T\Big(\frac1{\norm{x}^3}-\frac1{\norm{x}^2(1+\norm{x})}-\frac{\norm{x}^2}{\norm{x^5}(1+\norm{x})}\Big)\\ &=I+xx^T\Big(\frac{1+\norm{x}-\norm{x}-1}{\norm{x}^3(1+\norm{x})}\Big)=I,\end{align*}
  and so formula \eqref{EqB_x} holds true.

  Finally consider the point $(\xb,\xba)$ with $\xb=0$ and $\norm{\xba}=1$.  By Definition \ref{DefSCDProperty} and Theorem \ref{ThSubdiffConv} one has that
  \[\Sp\partial q(\xb,\xba)=\Sp^*\partial q(\xb,\xba)=\big\{\{0\}\times\R^n\big\}\;\cup\;\Limsup_{\AT{x\to 0,x\not=0}{\frac x{\norm{x}}\to\xba}}\;\rge(B_x,I-B_x).\]
  Since the matrices $B_x$ are bounded, the above $\Limsup$ amounts to $\rge(B,I-B)$ where, taking into account \eqref{EqB_x},
  \[B=\lim_{\AT{x\to 0,x\not=0}{\frac x{\norm{x}}\to\xba}}B_x= \xba\xba{}^T\]
    However, note that
    \[\lim_{\AT{x\to 0,x\not=0}{\frac x{\norm{x}}\to\xba}}\nabla^2 q(x)\]
    does not exist.
  Finally note that, at points $(\xb,\xba)$ with $\xb=0$ and $\norm{\xba}=1$, one has
  \[\gph D^*\partial q(\xb,\xba)=\Sp^*\partial q(\xb,\xba)\cup\{(s,s^*)\mv s\in\R_-\{\xba\}, \skalp{s^*,\xba}\leq 0\},\]
  where the last term is generated by sequences $(0,x^*)\to(0,\xba)$ with $\norm{x^*}=1$. Thus, in this situation the mapping $\Sp^*\partial q(\xb,\xba)$ has a simpler structure than the limiting coderivative $D^*\partial q(\xb,\xba)$ (similarly as in \cite[Example 3.29]{GfrOut21a}.)
\end{example}
\if{
The representation of $L$ via the matrix $B$ is mainly of theoretical interest, for actual computations the use of another basis of $L$ is probably more appropriate. E.g., if $q$ is twice continuously differentiable then $\rge(I,\nabla^2 q(x))=\gph D^*\partial q(x,\nabla q(x))$ and the relation between $B$ and $\nabla^2q(x)$ is given by $B=(I+\nabla^2 q(x))^{-1}$, $I-B=(I+\nabla^2 q(x))^{-1}\nabla^2 q(x)$ and $\nabla^2 q(x)=B^{-1}(I-B)$. It seems to be advantageous to work with $I$ and $\nabla^2 q(x)$ instead of $B$ and $I-B$.\\
Therefore, let us now discuss another possible representation of the subspace  $L=\rge(B,I-B)$.
\begin{lemma}\label{LemBases}
\begin{enumerate}
\item Let $B$ be a symmetric positive semidefinite $n\times n$ matrix with $\norm{B}\leq 1$ and let $\V:=\ker B$ and $\U=\V^\perp=\rge B$. Further let $P$ denote the symmetric $n\times n$ matrix representing the orthogonal projection on $\U$ and set $W:=B^\dag (I-B) + (I-P)$, where $B^\dag$ denotes the Moore-Penrose inverse of $B$. Then
    \[\rge(B,I-B)=\rge(P,W),\]
  where $W$ is symmetric positive semidefinite and satisfies $W(I-P)=(I-P)W=I-P$.
\item Let $P$ be a symmetric $n\times n$ matrix representing the orthogonal projection on some subspace $\U$ and let $W$ be a symmetric positive semidefinite $n\times n$ matrix satisfying $W(I-P)=I-P$. Then $W+P$ is positive definite and the matrix $B:=(W+P)^{-1}P$ is symmetric and positive semidefinite, $\norm{B}\leq 1$ and
    \[\rge(B,I-B)=\rge(P,W).\]
\end{enumerate}
\end{lemma}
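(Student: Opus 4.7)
My plan is to prove both parts by exhibiting, in each direction, an explicit invertible $n\times n$ matrix $M$ realizing the basis change between the two $2n\times n$ matrices $(B,I-B)$ and $(P,W)$; this immediately forces $\rge(B,I-B)=\rge(P,W)$, and the remaining properties follow from linear algebra in the orthogonal splitting $\R^n=\U\oplus\V$. Since $B$ is symmetric PSD with $\U=\rge B$ and $\V=\ker B$, I will repeatedly use the standard identities $BB^\dag=B^\dag B=P$, $BP=PB=B$, and $B^\dag(I-P)=(I-P)B^\dag=0$, all immediate from the spectral decomposition of $B$.

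For part (1), I take $M:=B^\dag+(I-P)$. Direct computation gives $BM=BB^\dag+B(I-P)=P+0=P$, and $(I-B)M=B^\dag-P+(I-P)-B+BP=B^\dag+I-2P$, which is exactly $B^\dag(I-B)+(I-P)=W$. In the block form adapted to $\R^n=\U\oplus\V$ one has $M|_\U=(B|_\U)^{-1}$ and $M|_\V=I|_\V$, so $M$ is invertible; hence $\rge(P,W)=\rge((B,I-B)M)=\rge(B,I-B)$. The same block form shows $W|_\U=(B|_\U)^{-1}-I|_\U\succeq 0$ (because the eigenvalues of $B|_\U$ lie in $(0,1]$ by $\|B\|\le 1$) and $W|_\V=I|_\V$, so $W$ is symmetric PSD; finally, $W(I-P)=(I-P)W=I-P$ drops out of the block form or, directly, from $B^\dag(I-P)=0$ and $(I-P)^2=I-P$.

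For part (2), the crucial first step is to observe that the two-sided relation $W(I-P)=(I-P)W=I-P$ yields $WP=PW$; indeed, both equalities rearrange to $WP=PW=W-I+P$. This commutativity is what allows me to conclude that $(W+P)^{-1}$ commutes with $P$, making $B=(W+P)^{-1}P=P(W+P)^{-1}$ symmetric. Positive definiteness of $W+P$ I establish via $\skalp{(W+P)v,v}=\skalp{Wv,v}+\|Pv\|^2$: vanishing forces $Wv=0$ and $Pv=0$, but then $Wv=W(I-P)v=(I-P)v=v$ gives $v=0$. From $(P+W)(W+P)^{-1}=I$ I split $I=P(W+P)^{-1}+W(W+P)^{-1}=B+(I-B)$, identifying $I-B=W(W+P)^{-1}$; hence $(B,I-B)=(P,W)(W+P)^{-1}$ and $\rge(B,I-B)=\rge(P,W)$. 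In the $\U\oplus\V$ decomposition, $W$ commutes with $P$ and satisfies $W|_\V=I|_\V$, so $B$ is block-diagonal with blocks $(W|_\U+I|_\U)^{-1}\succeq 0$ on $\U$ (with spectrum in $(0,1]$) and $0$ on $\V$, yielding $B$ symmetric PSD with $\|B\|\le 1$.

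The main obstacle is the commutativity $WP=PW$ in part (2): without the two-sided form of the hypothesis this identity would fail, and both the symmetry of $B$ and the simultaneous block-diagonalization of $P$, $W$, $B$ along $\U\oplus\V$ would collapse. Once this is in place, the remaining PSD and norm claims are routine eigenvalue bookkeeping, and the subspace identities in both directions reduce to reading off the explicit change-of-basis matrix.
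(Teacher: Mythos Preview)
Your proof is correct and follows essentially the same approach as the paper's: the same change-of-basis matrix $M=B^\dag+(I-P)=(B+I-P)^{-1}$ in part~(1), and the same commutativity-then-invertibility structure in part~(2). One small presentational point: the hypothesis in part~(2) is only the one-sided relation $W(I-P)=I-P$; the two-sided form you invoke follows immediately by transposing (since $W$ and $I-P$ are symmetric), which the paper states explicitly and you should too. Your positive-definiteness argument for $W+P$ (via $Wv=0$, $Pv=0\Rightarrow v=(I-P)v=Wv=0$) is a slight variant of the paper's direct estimate $\skalp{u,(W+P)u}\geq\norm{u}^2$, and your block-diagonal eigenvalue bookkeeping replaces the paper's ``product of commuting PSD matrices'' reasoning, but these are cosmetic differences.
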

\begin{proof}
\begin{enumerate}
\item
  Taking into account $BB^\dag=B^\dag B=P$ and $B(I-P)=B^\dag(I-P)=0$, we obtain
\[\big(B+ I-P\big)\big(B^\dag+ I-P\big)=B B^\dag+(I-P)=I.\]
Hence $B+ I-P$ is nonsingular and
\begin{gather*}(B+I-P)^{-1}B=\big(B^\dag+(I-P)\big)B=P,\\
(B+I-P)^{-1}(I-B)=B^\dag (I-B) + (I-P)(I-B)=B^\dag (I-B) + (I-P)=W.
\end{gather*}
Since $B$ is symmetric, $B^\dag$ is symmetric as well and from the identity $B^\dag (I-B)=-B^\dag B+ B^\dag=-P+B^\dag=(I-B)B^\dag$ we conclude that $W$ is symmetric. Further, both matrices $I-B$ and $B^\dag$ are symmetric and positive semidefinite and since they commute, their product is also positive semidefinte. Since $I-P$ is also positive semidefinite, so must be $W$ as well. From $B^\dag(I-P)=0$ we obtain $W(I-P)=(I-P)=(I-P)^TW^T=(I-P)W$.
\item Since $W$ and $I-P$ are symmetric, $W(I-P)=(I-P)=(W(I-P))^T=(I-P)W$ implying that $W$ and $P$ commute. Since $(I-P)(W+P)P=(I-P)P+(I-P)P^2=0$ and $P(W+P)(I-P)=P(I-P)+P^2(I-P)=0$, for every $u\in\R^n$ we  obtain
\begin{align*}\skalp{u, (W+P)u}&=\skalp{Pu,(W+P)Pu}+\skalp{(I-P)u,(W+P)Pu}+\skalp{Pu,(W+P)(I-P)u}\\&\qquad\qquad+\skalp{(I-P)u,(W+P)(I-P)Pu}\\
&=\skalp{Pu,WPu}+\norm{Pu}^2+\norm{(I-P)u}^2\geq \norm{u}^2\end{align*}
showing that $W+P$ is positive definite. Since $W$ and $P$ commute, it follows that $W+P$  commutes both with $W$ and $P$ and consequently so does $(W+P)^{-1}$.
Hence, both $B=(W+P)^{-1}P$ and $I-B=I-(W+P)^{-1}P=(W+P)^{-1}W$ are  symmetric and positive semidefinite and  are symmetric and positive semidefinite and from the definiteness of $I-B$ we deduce $\norm{B}\leq 1$. By observing that $\rge(P,W)=\rge(P(W+P)^{-1},W(P+W)^{-1})=\rge((W+P)^{-1}P,(P+W)^{-1}W)=\rge(B,I-B)$, the proof is complete.
\end{enumerate}
\end{proof}
Let us illustrate the above development via a simple example.
\begin{example}
  Assume that $q(x)=\norm{x}$ so that
  \[\partial q(x)=\begin{cases}\B&\mbox{for $x=0$}\\\frac x{\norm{x}}&\mbox{otherwise.}\end{cases}\]
  By virtue of Theorem \ref{ThSubdiffConv}, $\partial q$ is an \SCD mapping. When considering a pair $(\xb,\xba)\in\gph\partial q$ with $\xb=0$ and $\norm{\xba}<1$, then it is easy to see that $\partial q$ is graphically smooth of dimension $n$ at $(\xb,\xba)$ and, by Definition \ref{DefSCDProperty},
  \[\Sp\partial q(\xb,\xba)=\Sp^*\partial q(\xb,\xba)=\big\{\{0\}\times\R^n\big\}.\]
  In this case we have the representation $\{0\}\times\R^n=\rge(B,I-B)=\rge(P,W)$ with $B=P=0$. $\V=\R^n$, $\U=\{0\}$ and $W=I$.
  If $x\not=0$ then $q$ is even twice continuously differentiable near $x$ and, as pointed out below Theorem \ref{ThSubdiffConv}, with $\xba=\frac x{\norm{x}}$ one has
  \[\Sp\partial q(\xb,\xba)=\Sp^*\partial q(\xb,\xba)=\rge(B_x,I-B_x)=\rge(P_x,W_x)\]
  with $P_x=I$, $\U=\R^n$, $\V=0$, $W_x=\nabla^2q(x)=\frac1{\norm{x}}\Big(I-\frac {xx^T}{\norm{x}^2}\Big)$ and
  \begin{align*}
    B_x=\big(I+\nabla^2 q(x)\big)^{-1}=\left(I+\frac1{\norm{x}}\Big(I-\frac {xx^T}{\norm{x}^2} \Big)\right)^{-1}=\left(\frac{\norm{x}+1}{\norm{x}}\Big(I-\frac{xx^T}{\norm{x}^2(1+\norm{x})}\Big)\right)^{-1}.
  \end{align*}
  We claim that
  \begin{equation}\label{EqB_x}B_x=\frac{\norm{x}}{\norm{x}+1}\Big(I+\frac{xx^T}{\norm{x}^3}\Big).\end{equation}
  Indeed,
  \begin{align*}\Big(I-\frac{xx^T}{\norm{x}^2(1+\norm{x})}\Big)\Big(I+\frac{xx^T}{\norm{x}^3}\Big)&= I+xx^T\Big(\frac1{\norm{x}^3}-\frac1{\norm{x}^2(1+\norm{x})}-\frac{\norm{x}^2}{\norm{x^5}(1+\norm{x})}\Big)\\ &=I+xx^T\Big(\frac{1+\norm{x}-\norm{x}-1}{\norm{x}^3(1+\norm{x})}\Big)=I,\end{align*}
  and so formula \eqref{EqB_x} holds true.

  Finally consider the point $(\xb,\xba)$ with $\xb=0$ and $\norm{\xba}=1$.  By Definition \ref{DefSCDProperty} and Theorem \ref{ThSubdiffConv} one has that
  \[\Sp\partial q(\xb,\xba)=\Sp^*\partial q(\xb,\xba)=\big\{\{0\}\times\R^n\big\}\;\cup\;\Limsup_{\AT{x\to 0,x\not=0}{\frac x{\norm{x}}\to\xba}}\;\rge(B_x,I-B_x).\]
  Since the matrices $B_x$ are bounded, the above $\Limsup$ amounts to $\rge(B,I-B)$ where, taking into account \eqref{EqB_x},
  \[B=\lim_{\AT{x\to 0,x\not=0}{\frac x{\norm{x}}\to\xba}}B_x= \xba\xba{}^T\]
  In terms of Lemma \ref{LemBases} we have $\rge(B,I-B)=rge(P,W)$ with $P=B$ and $W=I-P$, but there does not hold
  \[P=\lim_{\AT{x\to 0,x\not=0}{\frac x{\norm{x}}\to\xba}}P_x,\quad W=\lim_{\AT{x\to 0,x\not=0}{\frac x{\norm{x}}\to\xba}}W_x.\]
  Finally note that, at points $(\xb,\xba)$ with $\xb=0$ and $\norm{\xba}=1$, one has
  \[\gph D^*\partial q(\xb,\xba)=\Sp^*\partial q(\xb,\xba)\cup\{(s,s^*)\mv s\in\R_-\xba, \skalp{s^*,\xba}\leq 0\},\]
  where the last term is generated by sequences $(0,x^*)\to(0,\xba)$ with $\norm{x^*}=1$. Thus, in this situation the mapping $\Sp^*\partial q(\xb,\xba)$ has a simpler structure than the limiting coderivative $D^*\partial q(\xb,\xba)$ (similarly as in \cite[Example 3.29]{GfrOut21a}.)
\end{example}
Note that the $\V\U$-space decomposition provided by Lemma \ref{LemBases} depends on $\ker B$ and not on $\Span \partial q(x)$ as the one introduced in \cite{LeOuSz00}. Therefore these $\V\U$-space decompositions might be different.

The alternative representation of subspaces $L\in\Sp\partial q(x,x^*)$ given by Lemma \ref{LemBases} can be geometrically interpreted as follows.

\begin{proposition}Let $q:\R^n\to\bar \R$ be a proper lsc convex function and let $L\in\Sp \partial q(\xb,\xba)$. Let $L=\rge(P,W)$ , where $P$ and $W$ are symmetric positive semidefinite $n\times n$ matrices such that $P$ represents the orthogonal projection on some subspace $\U$ and $W(I-P)=I-P$. Let $G$ be a symmetric $n\times n$ matrix with $P(G-W)P=0$ and consider the function $\tilde q:\R^n\to\bar \R$ given by
\[\tilde q(x)=q(\xb)+\skalp{\xba,x-\xb}+\frac 12\skalp{x-\xb,G(x-\xb)}+\delta_\U(x-\xb).\]
Then $\tilde q$ is convex, $\tilde q(\xb)=q(\xb)$, $\xba\in \partial \tilde q(\xb)$ and for  $(x,x^*)\in\gph\partial \tilde q$ one has $\gph D\partial\tilde q(x,x^*)=\rge(P,W)$, $\Sp \partial\tilde q(x,x^*)=\{\rge(P,W)\}$.
\end{proposition}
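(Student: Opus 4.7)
The plan is to rewrite $\tilde q$ so that the subdifferential sum rule applies cleanly. Since $P(G-W)P=0$, for every $u\in\U$ one has
\[\skalp{u,Gu}=\skalp{Pu,GPu}=\skalp{u,PGPu}=\skalp{u,PWPu}=\skalp{u,Wu},\]
so the quadratic forms defined by $G$ and $W$ agree on $\U$. I would therefore introduce $\tilde f(x):=q(\xb)+\skalp{\xba,x-\xb}+\tfrac12\skalp{x-\xb,W(x-\xb)}$, which is $C^\infty$ and convex (since $W\succeq 0$), and observe that $\tilde q=\tilde f+\delta_\U(\cdot-\xb)$. This representation immediately yields the convexity of $\tilde q$, the equality $\tilde q(\xb)=q(\xb)$, and the inclusion $\xba\in\partial\tilde q(\xb)$ via the subgradient inequality, since $\tilde q(x)-\tilde q(\xb)-\skalp{\xba,x-\xb}$ is $\ge 0$ on $\xb+\U$ and $+\infty$ elsewhere.

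The Moreau--Rockafellar sum rule then gives $\partial\tilde q(x)=\xba+W(x-\xb)+\U^\perp$ for $x\in\xb+\U$ and $\emptyset$ otherwise, so
\[\gph\partial\tilde q=\big\{(x,x^*)\mv(I-P)(x-\xb)=0,\ P(x^*-\xba-W(x-\xb))=0\big\}\]
is an affine subspace of $\R^{2n}$. Since the tangent cone to an affine subspace coincides with its associated linear subspace, $\gph D\partial\tilde q(x,x^*)$ equals the constant subspace $T:=\{(u,v)\mv(I-P)u=0,\ P(v-Wu)=0\}$ at every point of $\gph\partial\tilde q$.

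The central step is the identification $T=\rge(P,W)$. Here I would first exploit the symmetry of $W$ and $I-P$ to transpose $W(I-P)=I-P$ into $(I-P)W=I-P$, which together yield the commutativity $PW=WP$. The inclusion $\rge(P,W)\subseteq T$ then follows by direct computation: $(I-P)Pa=0$ and $P(Wa-WPa)=PW(I-P)a=P(I-P)a=0$. For the reverse inclusion, given $(u,v)\in T$ I would set $a:=u+(I-P)v$ and verify $Pa=u$ together with $Wa=v$, the latter through the chain $Pv=PWu=WPu=Wu$ combined with $W(I-P)v=(I-P)v$. A short argument also shows $\rge(P,W)\in\Z_n$: if $Pa=Wa=0$ then $a\in\U^\perp$, so $Wa=W(I-P)a=(I-P)a=a=0$. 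Therefore $\partial\tilde q$ is graphically smooth of dimension $n$ everywhere on its graph, with constant derivative subspace $\rge(P,W)$, and the outer-limit definition of $\Sp\partial\tilde q(x,x^*)$ collapses to the singleton $\{\rge(P,W)\}$. The main obstacle is the identification $T=\rge(P,W)$, which rests essentially on the commutativity $PW=WP$ enforced by the symmetry of $W$ and the hypothesis $W(I-P)=I-P$.
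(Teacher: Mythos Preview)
Your proof is correct and follows essentially the same route as the paper: rewrite $\tilde q$ with $W$ in place of $G$ on $\xb+\U$, apply the sum rule to obtain $\partial\tilde q(x)=\xba+W(x-\xb)+\U^\perp$, and identify the constant tangent space with $\rge(P,W)$. The only cosmetic difference is in this last identification: the paper writes the tangent space directly as $\{(u,Wu)\mv u\in\U\}+\{0\}\times\U^\perp=\rge(P,WP)+\rge(0,W(I-P))=\rge(P,W)$ in one line, while you establish the commutativity $PW=WP$ first and then verify both inclusions by constructing an explicit preimage $a=u+(I-P)v$; both arguments are equivalent.
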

\begin{proof}Since $\dom \tilde q=\xb+\U$ and $u^TGu=u^TWu\geq 0$, $u\in\U$, we have
$\tilde q(x)=q(\xb)+\skalp{\xba,x-\xb}+\frac 12\skalp{x-\xb,W(x-\xb)}+\delta_\U(x-\xb)$, $x\in\R^n$. Hence $\tilde q$ is convex as the sum of two convex functions and $\tilde q(\xb)=q(\xb)$ and $\xba\in\partial \tilde q(\xb)$ follows.
  For every $x\in\xb+\U$ we have $\partial \delta_\U(x-\xb)=N_\U(x-\xb)=\U^\perp$ implying that $\partial \tilde q(x)=\xba+W(x-\xb)+\U^\perp$. Hence, for every $x^*\in\partial \tilde q(x)$ we obtain
  \[T_{\gph \partial\tilde q}(x,x^*)=\{(u,Wu)\mv u\in\U\}+\{0\}\times \U^\perp=:\bar L\]
  and   $\Sp \partial\tilde q(x,x^*)=\{\bar L\}$, $(x,x^*)\in\gph \partial \tilde q$ follows. Since $W(I-P)=I-P$ and $\U^\perp=\rge(I-P)$, we obtain
  \[\bar L=\{(Pp,WPp)\mv p\in\R^n\}+\{(0)\}\times\rge(I-P)=\rge(P,WP)+\rge(0,W(I-P))=\rge(P,W)\]
  and the proof is complete.
\end{proof}
\begin{remark}
  In \cite{Ro85}, a function of the form
  \[\frac 12u^TGu+\delta_\U(u)\]
  is called a {\em generalized purely quadratic function}.
\end{remark}
}\fi
In our numerical experiments we will use convex functions with some separable structure, which carries over to $\Sp^*\partial q$.
\begin{lemma}\label{LemSepStruct}
If $q(x_1,x_2)=q(x_1)+q_2(x_2)$ for lsc convex functions $q_i:\R^{n_i}\to\bar\R$, $i=1,2$, then for every $((\xb_1,\xb_2),(\xb_1^*,\xb_2^*))\in\gph \partial q$ there holds
\begin{equation*}
\Sp \partial q((\xb_1,\xb_2),(\xb_1^*,\xb_2^*))=\Big\{\{((u_1,u_2), (u_1^*,u_2^*))\mv (u_i,u_i^*)\in L_i, i=1,2\}\mv L_i\in \Sp\partial q_i(\xb_i,\xb_i^*),\ i=1,2\Big\}.
\end{equation*}
\end{lemma}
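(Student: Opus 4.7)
The plan is to exploit the Cartesian-product structure of $\gph \partial q$. Since $q$ is the sum of two functions depending on disjoint groups of variables, the subdifferential calculus for separable convex sums gives $\partial q(x_1,x_2)=\partial q_1(x_1)\times\partial q_2(x_2)$. I would introduce the coordinate-rearranging linear isometry $\sigma:\R^{2(n_1+n_2)}\to \R^{2n_1}\times\R^{2n_2}$, $((x_1,x_2),(x_1^*,x_2^*))\mapsto ((x_1,x_1^*),(x_2,x_2^*))$, so that $\sigma(\gph \partial q)=\gph\partial q_1\times \gph \partial q_2$. The standard product rule for tangent cones to Cartesian products then yields
\[\sigma\bigl(T_{\gph\partial q}((\xb_1,\xb_2),(\xb_1^*,\xb_2^*))\bigr)=T_{\gph \partial q_1}(\xb_1,\xb_1^*)\times T_{\gph\partial q_2}(\xb_2,\xb_2^*).\]

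Next I would characterize the graphically smooth points of $\partial q$ in terms of those of the $\partial q_i$. Since $\sigma$ is a linear isometry that preserves dimension, a Cartesian product $L_1\times L_2$ is an $(n_1+n_2)$-dimensional subspace of $\R^{2(n_1+n_2)}$ precisely when each $L_i$ is a subspace with $\dim L_i=n_i$. Theorem \ref{ThSubdiffConv} forces any tangent subspace arising at a graphically smooth point of $\partial q_i$ to have dimension exactly $n_i$, which rules out unbalanced decompositions. Consequently $\partial q$ is graphically smooth at $((\xb_1,\xb_2),(\xb_1^*,\xb_2^*))$ iff each $\partial q_i$ is graphically smooth at $(\xb_i,\xb_i^*)$, and in that case $\sigma(\gph D\partial q)$ factors as $\gph D\partial q_1\times\gph D\partial q_2$.

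Finally I would pass to the $\Limsup$ defining $\Sp\partial q$. The orthogonal projection onto a product subspace is block diagonal, $P_{L_1\times L_2}=\mathrm{diag}(P_{L_1},P_{L_2})$, so, after accounting for $\sigma$, a sequence of product subspaces converges in $d_\Z$ iff each factor converges separately. Starting from $(x^{(k)},x^{*(k)})\in \OO_{\partial q}$ tending to $((\xb_1,\xb_2),(\xb_1^*,\xb_2^*))$ with $\gph D\partial q(x^{(k)},x^{*(k)})\to L$, the components $(x_i^{(k)},x_i^{*(k)})$ lie in $\OO_{\partial q_i}$ and tend to $(\xb_i,\xb_i^*)$; compactness of $\Z_{n_i}$ permits extracting subsequences along which each factor graph converges, and the block-diagonal identity then forces $L$ to be the product of the limits $L_i\in \Sp\partial q_i$. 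The reverse inclusion is obtained by assembling a product sequence from approximating sequences for given $L_i\in \Sp\partial q_i$.

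The main obstacle I expect is the bookkeeping around the permutation $\sigma$, combined with the verification that $d_\Z$-convergence of products is separately equivalent to $d_\Z$-convergence on each factor subspace. Once these identifications are in place, the claimed characterization reduces to the tangent-cone product rule together with the block-diagonal structure of orthogonal projections onto product subspaces.
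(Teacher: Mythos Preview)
Your approach is essentially the paper's: exploit $\sigma(\gph\partial q)=\gph\partial q_1\times\gph\partial q_2$, identify $\OO_{\partial q}$ with the product of the $\OO_{\partial q_i}$, factor the tangent space accordingly, and pass to the $\Limsup$. The paper proceeds exactly this way, only more tersely in the limit step.

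There is, however, one genuine gap. You invoke ``the standard product rule for tangent cones to Cartesian products'' to get
\[
\sigma\bigl(T_{\gph\partial q}((x_1,x_2),(x_1^*,x_2^*))\bigr)=T_{\gph \partial q_1}(x_1,x_1^*)\times T_{\gph\partial q_2}(x_2,x_2^*),
\]
but only the inclusion $\subseteq$ holds in general; the reverse inclusion fails without extra hypotheses (see, e.g., \cite[Proposition~6.41]{RoWe98}). This is precisely the point the paper addresses explicitly: it cites \cite[Corollary~3.28, Remark~3.18]{GfrOut21a} to conclude that $\gph\partial q_i$ is \emph{geometrically derivable} at each $(x_i,x_i^*)\in\OO_{\partial q_i}$, and then appeals to \cite[Proposition~1]{GfrYe17a} to obtain equality in the product formula. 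Without this derivability input, your argument for the inclusion $\OO_{\partial q_1}\times\OO_{\partial q_2}\subseteq\OO_{\partial q}$ (and hence the direction ``$\supseteq$'' of the lemma) breaks down: from $(x_i,x_i^*)\in\OO_{\partial q_i}$ you cannot conclude that the product of the two $n_i$-dimensional tangent subspaces equals $T_{\gph\partial q}$, only that it contains it. Your dimension-balancing sentence invoking Theorem~\ref{ThSubdiffConv} does not repair this, because it presupposes the very product decomposition you are trying to establish. Once you insert the geometric derivability of the factor graphs (which does hold, ultimately because maximal monotone graphs are Lipschitz manifolds via the Minty parametrization), the rest of your outline goes through.
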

\begin{proof}
  We claim that $\OO_{\partial q}=\{((x_1,x_2),(x_1^*,x_2^*))\mv (x_i,x_i^*)\in\OO_{\partial q_i},\ i=1,2\}$ and that
  \begin{equation}\label{EqAuxTanCone}T_{\gph \partial q}((x_1,x_2),(x_1^*,x_2^*))=\{((u_1,u_2), (u_1^*,u_2^*))\mv (u_i,u_i^*)\in T_{\gph \partial q_i}(x_i,x_i^*),\ i=1,2\}\end{equation}
  holds for all $((x_1,x_2),(x_1^*,x_2^*))\in\OO_{\partial q}$. Indeed, if $((x_1,x_2),(x_1^*,x_2^*))\in\OO_{\partial q}$ then $(x_1,x_1^*)\in \OO_{\partial q_1}$ because of $\{((u_1,0),(u_1^*,0))\mv (u_1,u_1^*)\in\partial q_1(x_1,x_1^*)\}\subseteq T_{\gph \partial q}((x_1,x_2),(x_1^*,x_2^*))$ and, analogously, $(x_2,x_2^*)\in \OO_{\partial q_1}$. This proves $\OO_{\partial q}\subseteq\{((x_1,x_2),(x_1^*,x_2^*))\mv (x_i,x_i^*)\in\OO_{\partial q_i},\ i=1,2\}$. To show the reverse inclusion, consider $(x_i,x_i^*)\in \OO_{q_i}$, $i=1,2$. Taking into account \cite[Corollary 3.28, Remark 3.18]{GfrOut21a}, the sets $\gph\partial q_i$ are geometrically derivable at points $(x_i,x_i^*)\in \OO_{\partial q_i}$, $i=1,2$ and therefore
  \begin{equation}\label{EqAuxTanCone1}
  T_{\gph \partial q_1\times \gph \partial q_2}((x_1,x_1^*),(x_2,x_2^*))=T_{\gph \partial q_1}(x_1,x_1^*)\times T_{\gph \partial q_2}(x_2,x_2^*)
  \end{equation}
  by \cite[Proposition 1]{GfrYe17a}. Thus, $T_{\gph \partial q_1\times \gph \partial q_2}((x_1,x_1^*),(x_2,x_2^*))$ is an $n_1+n_2$ dimensional subspace and, since the tangent cones in \eqref{EqAuxTanCone} and \eqref{EqAuxTanCone1} coincide up to a reordering of the elements, $((x_1,x_2),(x_1^*,x_2^*))\in \OO_{\partial q}$ together with the validity of \eqref{EqAuxTanCone} follows. Hence our claim holds true and the assertion of the lemma follows from the definition.
\end{proof}
Clearly, the assertion of Lemma 2.10 can be extended to the general case when the sum defining $q$ has an arbitrary finite number of terms.

\section{On \ssstar Newton methods for \SCD mappings}

In this section we recall the general framework for the \ssstar Newton method  introduced in \cite{GfrOut21} and adapted to \SCD mappings in \cite{GfrOut21a}.
Consider the inclusion
\begin{equation}\label{EqIncl}
  0\in F(x),
\end{equation}
where $F:\R^n\tto\R^n$ is a mapping having the \SCD property around some point $(\xb,0)\in\gph F$.
\begin{definition}
We say that $F:\R^n\tto\R^n$ is {\em\SCD \ssstar}  at $(\xb,\yb)\in\gph F$ if $F$ has the \SCD property around $(\xb,\yb)$ and
for every $\epsilon>0$ there is some $\delta>0$ such that the inequality
\begin{align*}
\vert \skalp{x^*,x-\xb}-\skalp{y^*,y-\yb}\vert&\leq \epsilon
\norm{(x,y)-(\xb,\yb)}\norm{(x^*,y^*)}
\end{align*}
holds for all $(x,y)\in \gph F\cap \B_\delta(\xb,\yb)$ and all $(y^*,x^*)$ belonging to any $L\in\Sp^*F(x,y)$.
\end{definition}
 Clearly, every mapping with the SCD property around  $(\xb,\yb) \in\gph F$
which is \ssstar at  $(\xb,\yb)$ is automatically SCD \ssstar at  $(\xb,\yb)$. Therefore, the class of \SCD \ssstar mappings is even richer than the class of \ssstar maps. In particular, it follows from \cite[Theorem 2]{Jou07} that every mapping whose graph is a closed subanalytic set is \SCD \ssstar, cf. \cite{GfrOut21a}.

The following proposition provides the key estimate for the \ssstar Newton method for \SCD mappings.
\begin{proposition}[cf. {\cite[Proposition 5.3]{GfrOut21a}}]\label{PropConvNewton}
  Assume that $F:\R^n\tto\R^n$ is \SCD \ssstar at $(\xb,\yb)\in\gph F$. Then for every  $\epsilon>0$ there is  some $\delta>0$ such that the estimate
  \begin{equation*}
  \norm{x-C_L^T(y-\yb)-\xb}\leq \epsilon\sqrt{n(1+\norm{C_L}^2)}\norm{(x,y)-(\xb,\yb)}
  \end{equation*}
  holds for every $(x,y)\in\gph F\cap \B_\delta(\xb,\yb)$ and every $L\in\Sp^*F(x,y)\cap\Z_n^{\rm reg}$.
\end{proposition}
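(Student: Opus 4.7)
The plan is to apply the \SCD \ssstar inequality to cleverly chosen elements of $L$, exploiting the fact that any $L\in\Z_n^{\rm reg}$ admits a specific parametrization furnished by Proposition \ref{PropC_L}.

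First I would invoke the \SCD \ssstar property at $(\xb,\yb)$: given $\epsilon>0$, there exists $\delta>0$ such that
\[
\vert \skalp{x^*,x-\xb}-\skalp{y^*,y-\yb}\vert\leq \epsilon\,\norm{(x,y)-(\xb,\yb)}\,\norm{(x^*,y^*)}
\]
holds for all $(x,y)\in\gph F\cap \B_\delta(\xb,\yb)$ and all $(y^*,x^*)\in L$ with $L\in\Sp^*F(x,y)$. Crucially, this $\delta$ is uniform in $L$, which is what lets the conclusion be stated uniformly over $L\in\Sp^*F(x,y)\cap\Z_n^{\rm reg}$.

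Next I would fix such $(x,y)$ and $L\in\Sp^*F(x,y)\cap\Z_n^{\rm reg}$. By Proposition \ref{PropC_L}, $L=\rge(C_L,I)$, so the pairs $(y^*,x^*)\in L$ are precisely $(C_Lu,u)$ with $u$ ranging over $\R^n$. Substituting these into the inequality above and rearranging via $\skalp{C_Lu,y-\yb}=\skalp{u,C_L^T(y-\yb)}$ yields
\[
\bigl|\skalp{u,(x-\xb)-C_L^T(y-\yb)}\bigr|\leq \epsilon\,\norm{(x,y)-(\xb,\yb)}\,\sqrt{\norm{u}^2+\norm{C_Lu}^2}\leq \epsilon\sqrt{1+\norm{C_L}^2}\,\norm{u}\,\norm{(x,y)-(\xb,\yb)},
\]
the second inequality using $\norm{C_Lu}\leq \norm{C_L}\norm{u}$.

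Finally, to pass from this scalar bound to a norm bound on the vector $w:=(x-\xb)-C_L^T(y-\yb)$, I would choose $u=e_i$ for each standard basis vector, which gives $|w_i|\leq \epsilon\sqrt{1+\norm{C_L}^2}\,\norm{(x,y)-(\xb,\yb)}$ for $i=1,\dots,n$. Squaring and summing produces $\norm{w}^2\leq n\epsilon^2(1+\norm{C_L}^2)\norm{(x,y)-(\xb,\yb)}^2$, which is exactly the claimed estimate. There is essentially no technical obstacle here; the only thing to be careful about is that the \SCD \ssstar inequality is invoked with a single $\epsilon$ that is passed through unchanged, and that the parametrization of $L\in\Z_n^{\rm reg}$ collapses the two-sided term $|\skalp{x^*,\cdot}-\skalp{y^*,\cdot}|$ to an inner product with $w$ via a single free vector $u$.
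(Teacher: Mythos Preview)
Your argument is correct and is the natural derivation of this estimate. The paper itself does not prove Proposition~\ref{PropConvNewton} but merely cites it from \cite[Proposition 5.3]{GfrOut21a}; your parametrization $L=\rge(C_L,I)$ followed by testing against the standard basis vectors $e_i$ is exactly the standard route, and nothing is missing.
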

We now describe the \SCD variant of the \ssstar Newton method. Given a solution $\xb\in F^{-1}(0)$ of \eqref{EqIncl} and some positive scalar, we define the mappings $\A_{\eta,\xb}:\R^n\tto\R^n\times\R^n$ and $\New_{\eta,\xb}:\R^n\tto\R^n$ by
\begin{gather*}
  \A_{\eta,\xb}(x):=\{(\hat x,\hat y)\in\gph F\mv \norm{(\hat x,\hat y)-(\xb,0)}\leq \eta\norm{x-\xb}\},\\
  \New_{\eta,\xb}(x):=\{\hat x-C_L^T\hat y\mv (\hat x,\hat y)\in \A_{\eta,\xb}(x), L\in\Sp^*F(\hat x,\hat y)\cap \Z_n^{\rm reg}\}.
\end{gather*}
\begin{proposition}\label{PropSingleStep}
  Assume that $F$ is \SCD \ssstar at $(\xb,0) \in\gph F$ and \SCD regular around $(\xb,0)$ and let $\eta>0$. Then there is some $\bar\delta>0$  such that for every $x\in \B_{\bar\delta}(\xb)$ the mapping $F$ is \SCD regular around every point $(\hat x,\hat y)\in \A_{\eta,\xb}(x)$. Moreover, for every $\epsilon>0$ there is some $\delta\in(0,\bar\delta]$ such that
  \[\norm{z-\xb}\leq\epsilon\norm{x-\xb}\ \forall x\in \B_\delta(\xb), \forall z\in \New_{\eta,\xb}(x).\]
\end{proposition}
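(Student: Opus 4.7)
The first assertion is essentially a localization statement that follows from Lemma \ref{LemSCDReg}. Since $F$ is \SCD regular around $(\xb,0)$, that lemma yields a neighborhood $W$ of $(\xb,0)$ on which $F$ is \SCD regular at every point of $\gph F\cap W$, together with a uniform bound
\[\limsup_{(x,y)\longsetto{\gph F}(\xb,0)}\scdreg F(x,y)\leq\scdreg F(\xb,0)=:\kappa_0.\]
Pick $\kappa>\kappa_0$ and choose $\rho>0$ so small that $\B_{\rho}(\xb,0)\subseteq W$ and $\scdreg F(x,y)\leq\kappa$ for every $(x,y)\in\gph F\cap \B_\rho(\xb,0)$. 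Now set $\bar\delta:=\rho/\eta$. Then for any $x\in \B_{\bar\delta}(\xb)$ and any $(\hat x,\hat y)\in\A_{\eta,\xb}(x)$ we have $\norm{(\hat x,\hat y)-(\xb,0)}\leq\eta\norm{x-\xb}\leq\rho$, so $F$ is \SCD regular around $(\hat x,\hat y)$ and, via Lemma \ref{LemSCDReg} again, $\norm{C_L}\leq\kappa$ for every $L\in\Sp^*F(\hat x,\hat y)$.

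For the quantitative part I would invoke Proposition \ref{PropConvNewton}. Let $\epsilon>0$ be given and set
\[\epsilon':=\frac{\epsilon}{\eta\sqrt{n(1+\kappa^2)}}.\]
Since $F$ is \SCD \ssstar at $(\xb,0)$, Proposition \ref{PropConvNewton} supplies some $\delta'>0$ such that
\[\norm{\hat x-C_L^T\hat y-\xb}\leq \epsilon'\sqrt{n(1+\norm{C_L}^2)}\,\norm{(\hat x,\hat y)-(\xb,0)}\]
for every $(\hat x,\hat y)\in\gph F\cap \B_{\delta'}(\xb,0)$ and every $L\in\Sp^*F(\hat x,\hat y)\cap\Z_n^{\rm reg}$. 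Choose
\[\delta:=\min\bigl\{\bar\delta,\,\delta'/\eta\bigr\}.\]
Then for every $x\in \B_\delta(\xb)$ and every $(\hat x,\hat y)\in \A_{\eta,\xb}(x)$ we have $\norm{(\hat x,\hat y)-(\xb,0)}\leq \eta\norm{x-\xb}\leq\delta'$, so the above inequality applies. Using $\norm{C_L}\leq\kappa$ (from the first paragraph) and the definition of $\A_{\eta,\xb}(x)$ I get, for any $z=\hat x-C_L^T\hat y\in\New_{\eta,\xb}(x)$,
\[\norm{z-\xb}\leq \epsilon'\sqrt{n(1+\kappa^2)}\cdot\eta\norm{x-\xb}=\epsilon\norm{x-\xb},\]
which is exactly the required estimate.

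The main difficulty is really just orchestrating the quantifiers so that both ingredients (the \SCD \ssstar estimate of Proposition \ref{PropConvNewton} and the uniform bound on $\norm{C_L}$ from Lemma \ref{LemSCDReg}) are simultaneously available on the same neighborhood of $\xb$. Once the uniform bound $\kappa$ on $\norm{C_L}$ is secured on $\B_\rho(\xb,0)\cap\gph F$, the factor $\sqrt{n(1+\norm{C_L}^2)}$ in Proposition \ref{PropConvNewton} can be absorbed into the constant, and the one-sided localization $\norm{(\hat x,\hat y)-(\xb,0)}\leq\eta\norm{x-\xb}$ built into $\A_{\eta,\xb}$ does the rest, since it turns an estimate in terms of $\norm{(\hat x,\hat y)-(\xb,0)}$ into one in terms of $\norm{x-\xb}$ with the extra factor $\eta$.
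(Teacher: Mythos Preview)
Your proof is correct and follows essentially the same approach as the paper's: both use Lemma \ref{LemSCDReg} to obtain a uniform bound on $\norm{C_L}$ near $(\xb,0)$, then feed that bound into Proposition \ref{PropConvNewton} and scale the neighborhood by $1/\eta$ to pass from $\norm{(\hat x,\hat y)-(\xb,0)}$ to $\norm{x-\xb}$. The only cosmetic differences are that the paper fixes the uniform bound as $\kappa+1$ (with $\kappa=\scdreg F(\xb,0)$) rather than an arbitrary $\kappa>\kappa_0$, and arranges $\tilde\delta\leq\delta'$ up front rather than taking a minimum at the end.
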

\begin{proof}
  Let $\kappa:={\rm scd\,reg\;}F(\xb,0)$. Then, by Lemma \ref{LemSCDReg} there is some $\delta'>0$ such that $F$ is \SCD regular with ${\rm scd\,reg\;}F(x,y)\leq \kappa+1$ around any $(\hat x,\hat y)\in\gph F\cap \B_{\delta'}(\xb,0)$ and the first assertion follows with $\bar\delta:=\delta'/\eta$. Now consider $\epsilon>0$ and set $\tilde \epsilon:=\epsilon/(\eta\sqrt{n(1+(1+\kappa)^2)})$. By Proposition \ref{PropConvNewton} there is some $\tilde \delta\in (0,\delta']$ such that the inequality
  \[\norm{\hat x-C_L^T\hat y}\leq \tilde \epsilon\sqrt{n(1+\norm{C_L}^2)}\norm{(\hat x,\hat y)-(\xb,0)}\]
  holds for every $(\hat x,\hat y)\in\gph F\cap \B_{\tilde \delta}$ and every $L\in \Sp^*F(\hat x,\hat y)\cap\Z_n^{\rm reg}$. Set $\delta:=\tilde\delta/\eta$ and consider $x\in \B_\delta(\xb)$. For every $(\hat x,\hat y)\in \A_{\eta,\xb}(x)$ we have $\norm{(\hat x,\hat y)-(\xb,0)}\leq \eta\norm{x-\xb}\leq \tilde \delta\leq\delta'$ and consequently
  \[\norm{C_L}\leq {\rm scd\,reg\;}F(\hat x,\hat y)\leq \kappa+1\ \forall L\in \Sp^*F(\hat x,\hat y)\]
  Thus
  \[\norm{\hat x-C_L^T\hat y}\leq \tilde \epsilon\sqrt{n(1+(1+\kappa)^2)}\norm{(\hat x,\hat y)-(\xb,0)}\leq \epsilon\norm{x-\xb}\]
  and the second assertion follows.
\end{proof}
Assuming we are given some iterate $x^{(k)}$, the next iterate is formally given by $\ee x{k+1}\in \New_{\eta,\xb}(\ee xk)$. Let us have a closer look at this rule. Since we cannot expect in general that $F(x^{(k)})\not=\emptyset$ or that $0$ is close to
$F(x^{(k)})$, even if $x^{(k)}$ is close to a solution $\xb$,  we first perform  some  step which
yields $(\hat x^{(k)},\hat y^{(k)})\in\gph F$ as  an approximate projection of $(x^{(k)},0)$ onto $\gph F$. We require that
\begin{equation}\label{EqBndApprStep}
\norm{(\hat x^{(k)},\hat y^{(k)})-(\xb,0)}\leq \eta\norm{x^{(k)}-\xb}
\end{equation}
for some constant $\eta>0$, i.e. $(\ee{\hat x}k,\ee{\hat y}k)\in\A_{\eta,\xb}(\ee xk)$. For instance, if
\[\norm{(\hat x^{(k)},\hat y^{(k)})-(x^{(k)},0)}\leq \beta\dist{(x^{(k)},0),\gph F}\]
holds with some $\beta\geq 1$, then
\begin{align*}\norm{(\hat x^{(k)},\hat y^{(k)})-(\xb,0)}&\leq \norm{(\hat x^{(k)},\hat y^{(k)})-(x^{(k)},0)}+\norm{(x^{(k)},0)-(\xb,0)}\\
&\leq  \beta\dist{(x^{(k)},0),\gph F}+\norm{(x^{(k)},0)-(\xb,0)}\leq (\beta+1)\norm{(x^{(k)},0)-(\xb,0)}
\end{align*}
and thus \eqref{EqBndApprStep} holds with $\eta=\beta+1$ and we can fulfill \eqref{EqBndApprStep} without knowing the solution $\xb$. Further we require that $\Sp^*F(\hat x^{(k)},\hat y^{(k)})\cap\Z_n^{\rm reg}\not=\emptyset$ and  compute the new iterate as $x^{(k+1)}=\hat x^{(k)}-C_L^T\hat y^{(k)}$ for some $L\in \Sp^*F(\hat x^{(k)},\hat y^{(k)})\cap\Z_n^{\rm reg}$. In fact, in our numerical implementation we will not compute the matrix $C_L$, but two $n\times n$ matrices $A,B$ such that $L=\rge(B^T,A^T)$. The next iterate $x^{(k+1)}$ is then obtained by  $x^{(k+1)}=\hat x^{(k)}+\Delta x^{(k)}$ where  $\Delta x^{(k)}$ is a solution of the system $A\Delta x=-B\hat y^{(k)}$.
\if{
An  iteration step of the \SCD variant of the \ssstar Newton method can now be described as follows. Assume we are given some iterate $x^{(k)}$. Since we cannot expect in general that $F(x^{(k)})\not=\emptyset$ or that $0$ is close to
$F(x^{(k)})$, even if $x^{(k)}$ is close to a solution $\xb$,  we first perform  some  step which
yields $(\hat x^{(k)},\hat y^{(k)})\in\gph F$ as  an approximate projection of $(x^{(k)},0)$ onto $\gph F$. We require that
\begin{equation}\label{EqBndApprStep}
\norm{(\hat x^{(k)},\hat y^{(k)})-(\xb,0)}\leq \eta\norm{x^{(k)}-\xb}
\end{equation}
for some constant $\eta>0$. E.g., if
\[\norm{(\hat x^{(k)},\hat y^{(k)})-(x^{(k)},0)}\leq \beta\dist{(x^{(k)},0),\gph F}\]
holds with some $\beta\geq 1$, then
\begin{align*}\norm{(\hat x^{(k)},\hat y^{(k)})-(\xb,0)}&\leq \norm{(\hat x^{(k)},\hat y^{(k)})-(x^{(k)},0)}+\norm{(x^{(k)},0)-(\xb,0)}\\
&\leq  \beta\dist{(x^{(k)},0),\gph F}+\norm{(x^{(k)},0)-(\xb,0)}\leq (\beta+1)\norm{(x^{(k)},0)-(\xb,0)}
\end{align*}
and thus \eqref{EqBndApprStep} holds with $\eta=\beta+1$. Further we require that $\Sp^*F(\hat x^{(k)},\hat y^{(k)})\cap\Z_n^{\rm reg}\not=\emptyset$ and  compute the new iterate as $x^{(k+1)}=\hat x^{(k)}-C_L^T\hat y^{(k)}$ for some $L\in \Sp^*F(\hat x^{(k)},\hat y^{(k)})\cap\Z_n^{\rm reg}$. In fact, in our numerical implementation we will not compute the matrix $C_L$, but two $n\times n$ matrices $A,B$ such that $L=\rge(B^T,A^T)$. The next iterate $x^{(k+1)}$ is then obtained by  $x^{(k+1)}=\hat x^{(k)}+\Delta x^{(k)}$ where  $\Delta x^{(k)}$ is a solution of the system $A\Delta x=-B\hat y^{(k)}$.
}\fi
This leads to the following conceptual algorithm.
\begin{algorithm}[\SCD \ssstar Newton-type method for inclusions]\label{AlgNewton}\mbox{ }\\
 1. Choose a starting point $x^{(0)}$, set the iteration counter $k:=0$.\\
 2. If ~ $0\in F(x^{(k)})$, stop the algorithm.\\
  3. \begin{minipage}[t]{\myAlgBox} {\bf Approximation step: } Compute
  $$(\hat x^{(k)},\hat y^{(k)})\in\gph F$$ satisfying \eqref{EqBndApprStep} and such that $\Sp^*F(\hat x^{(k)},\hat y^{(k)})\cap\Z_n^{\rm reg}\not=\emptyset$.\end{minipage}\\
  4. \begin{minipage}[t]{\myAlgBox} {\bf Newton step: }Select $n\times n$ matrices $A^{(k)},B^{(k)}$ with
  $$L^{(k)}:=\rge\big({B^{(k)}}^T,{A^{(k)}}^T)\in \Sp^*F(\hat x^{(k)},\hat y^{(k)})\cap\Z_n^{\rm reg},$$ calculate the Newton direction $\Delta x^{(k)}$ as a solution of the linear system $$A^{(k)}\Delta x=-B^{(k)}\hat y^{(k)}$$ and obtain the new iterate via $x^{(k+1)}=\hat x^{(k)}+\Delta x^{(k)}.$\end{minipage}\\
  \strut5. Set $k:=k+1$ and go to 2.
\end{algorithm}
For this algorithm, locally superlinear convergence follows from Proposition \ref{PropSingleStep}, see also \cite[Corollary 5.6]{GfrOut21a}.
\begin{theorem}\label{ThConvSSNewton}
 Assume that $F$ is \SCD \ssstar at $(\xb,0) \in\gph F$ and \SCD regular around $(\xb,0)$. Then for every $\eta>0$ there is a neighborhood $U$ of $\xb$ such that
 for every starting point $x^{(0)}\in U$ Algorithm \ref{AlgNewton} is well-defined and either stops after finitely many iterations at a solution of \eqref{EqIncl} or produces a sequence $x^{(k)}$ converging superlinearly to $\xb$  for any choice of $(\hat x^{(k)} ,\hat y^{(k)})$ satisfying \eqref{EqBndApprStep} and any $L^{(k)}\in\Sp^*F(\hat x^{(k)} ,\hat y^{(k)})$.
\end{theorem}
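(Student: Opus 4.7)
The plan is to derive the theorem essentially as a corollary of Proposition \ref{PropSingleStep}, since that proposition already packages the decisive contraction estimate for the set-valued Newton mapping $\New_{\eta,\xb}$. The main conceptual step is to recognize that every iterate $x^{(k+1)}$ produced by Algorithm \ref{AlgNewton} belongs to $\New_{\eta,\xb}(x^{(k)})$. Indeed, the requirement \eqref{EqBndApprStep} guarantees $(\hat x^{(k)},\hat y^{(k)})\in\A_{\eta,\xb}(x^{(k)})$. Further, since $L^{(k)}=\rge({B^{(k)}}^T,{A^{(k)}}^T)\in \Z_n^{\rm reg}$, Proposition \ref{PropC_L} implies that ${A^{(k)}}^T$, and hence $A^{(k)}$, is nonsingular, so the Newton system has the unique solution $\Delta x^{(k)}=-(A^{(k)})^{-1}B^{(k)}\hat y^{(k)}=-C_{L^{(k)}}^T\hat y^{(k)}$. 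Thus $x^{(k+1)}=\hat x^{(k)}-C_{L^{(k)}}^T\hat y^{(k)}\in \New_{\eta,\xb}(x^{(k)})$, and this identification is the conceptual bridge between the abstract setup and the concrete linear-algebra recipe used in the algorithm.

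Next I would define the candidate neighborhood $U$. Applying Proposition \ref{PropSingleStep} once with $\epsilon=1/2$, I obtain some $\bar\delta>0$ and some $\delta_0\in(0,\bar\delta]$ such that for every $x\in \B_{\delta_0}(\xb)$ the mapping $F$ is \SCD regular around every $(\hat x,\hat y)\in \A_{\eta,\xb}(x)$ (in particular $\Sp^*F(\hat x,\hat y)\cap \Z_n^{\reg}\neq\emptyset$, so the Newton step of the algorithm is well defined) and moreover $\|z-\xb\|\leq \frac12\|x-\xb\|$ for every $z\in \New_{\eta,\xb}(x)$. Setting $U:=\B_{\delta_0}(\xb)$, an easy induction argument shows that if $x^{(0)}\in U$ then either the algorithm stops in finitely many steps with $0\in F(x^{(k)})$ (and $x^{(k)}=\xb$ thanks to strong metric subregularity implied by \SCD regularity), or every $x^{(k)}$ lies in $U$, the algorithm is well defined at every iteration, and $\|x^{(k)}-\xb\|\to 0$ at least linearly with rate $1/2$.

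For the superlinear rate I would then re-apply Proposition \ref{PropSingleStep} along a null sequence $\epsilon_j\downarrow 0$. For each $j$ there exists $\delta_j\in(0,\delta_0]$ such that $\|z-\xb\|\leq\epsilon_j\|x-\xb\|$ for all $x\in \B_{\delta_j}(\xb)$ and all $z\in \New_{\eta,\xb}(x)$. Because the sequence $x^{(k)}$ already converges to $\xb$, for every $j$ there is an index $k_j$ with $x^{(k)}\in \B_{\delta_j}(\xb)$ for all $k\geq k_j$; hence
\[
\frac{\|x^{(k+1)}-\xb\|}{\|x^{(k)}-\xb\|}\leq \epsilon_j\qquad(k\geq k_j),
\]
and since $\epsilon_j\downarrow 0$ this quotient tends to zero, which is precisely superlinear convergence.

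The work is genuinely modest because Proposition \ref{PropSingleStep} already performs the heavy lifting (estimate from \SCD semismooth${}^{*}$ plus preservation of \SCD regularity from Lemma \ref{LemSCDReg}). The only point requiring care is the first paragraph — verifying that the algebraic output $x^{(k+1)}=\hat x^{(k)}+\Delta x^{(k)}$ coincides with a selection from $\New_{\eta,\xb}(x^{(k)})$, so that the Newton step is both executable (nonsingularity of $A^{(k)}$) and matches the abstract contraction mapping. Once that identification is in place, the superlinear rate falls out of the proposition by a diagonal-type argument with $\epsilon_j\downarrow 0$.
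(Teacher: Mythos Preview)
Your proposal is correct and follows exactly the approach indicated in the paper, which simply states that the theorem follows from Proposition~\ref{PropSingleStep} (citing also \cite[Corollary 5.6]{GfrOut21a}) without spelling out the details. One minor remark: your parenthetical claim that a finite-time stopping point must equal $\xb$ via strong metric subregularity is not needed for the theorem (which only asserts stopping at \emph{a} solution of \eqref{EqIncl}) and would require an additional reference, so it is best omitted.
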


 As shown in \cite[Corollary 6.4]{GfrOut21a}, if $F$ happens to be SCD \ssstar around $(\xb,0)$, then the assumptions of the above statement are fulfilled whenever $F$ is strongly metrically subregular at all points from a neighborhood of $(\xb,0)$. Hence, in particular, these assumptions are satisfied provided $F$ is strongly metrically regular around $(\xb,0)$, which is used in the test problem discussed in Section 7.

There is an alternative for the computation of the Newton direction $\Delta x^{(k)}$ based on the subspaces from $\Sp F(\ee{\hat x}k,\ee{\hat y}k)$, cf. \cite{GfrOut21a}:\\
4. \begin{minipage}[t]{\myAlgBox} {\bf Newton step: }\em Select $n\times n$ matrices $A^{(k)},B^{(k)}$ with
$$L^{(k)}:= \rge\big({A^{(k)}},{B^{(k)}})\in \Sp F(\hat x^{(k)},\hat y^{(k)})\cap\Z_n^{\rm reg},$$ compute a solution $p$ of the linear system
$${B^{(k)}}p =-\hat y^{(k)}$$  and obtain the new iterate $x^{(k+1)}=\hat x^{(k)}+\Delta x^{(k)}$ with Newton direction $\Delta x^{(k)}=A^{(k)}p$.\end{minipage} \\ \\

  For the choice between the two approaches for calculating the Newton direction it is important to consider whether elements from $\Sp^*F(\hat x^{(k)} ,\hat y^{(k)})$ or from $\Sp F(\hat x^{(k)} ,\hat y^{(k)})$ are easier to compute.

Note that for an implementation of the Newton step we need not to know the whole derivative $\Sp^*F(\hat x^{(k)},\hat y^{(k)})$ (or $\Sp F(\hat x^{(k)},\hat y^{(k)})$) but only one element $L^{(k)}\in \Sp^*F(\hat x^{(k)},\hat y^{(k)})$.

\section{Implementation of the \ssstar Newton method
}

There is a lot of possibilities how to implement the \ssstar Newton method. Apart from the Newton step, which is not uniquely determined by different choices of subspaces contained in $\Sp^* F(\hat x^{(k)},\hat y^{(k)})$, there is a multitude of possibilities how to perform the approximation step. In this section we will construct  an implementable version of the \ssstar Newton method for the numerical solution of GE \eqref{EqVI2ndK} under the assumption that the {\em proximal} mapping $P_\lambda q$, defined by
\[P_\lambda q(y):=\argmin_x\{\frac 1{2\lambda}\norm{x-y}^2+q(x)\},\ y\in\R^n,\]
can be efficiently evaluated for every $y\in\R^n$ and parameter $\lambda>0$. Since $q$ is convex, it is well known that for every $\lambda>0$ the proximal mapping $P_\lambda q$ is single-valued and nonexpansive and $P_\lambda q=(I+\lambda\partial q)^{-1}$, see, e.g. \cite[Proposition 12.19]{RoWe98}.

Given some scaling parameter $\gamma>0$, we will denote
\[u_\gamma(x):=P_{\frac 1\gamma}q(x-\frac 1\gamma f(x))-x.\]
From the definition of the proximal mapping we obtain that $u_\gamma(x)$ is the unique solution of the uniformly convex optimization problem
\[\min_u \frac \gamma2\norm{u}^2+\skalp{f(x),u}+q(x+u).\]
The first-order (necessary and sufficient) optimality condition reads as
\begin{equation}\label{EqOptCond_u}0\in \gamma u_\gamma(x)+f(x)+\partial q(x+u_\gamma(x)).\end{equation}
Since $P_\lambda q$ is nonexpansive, we obtain the bounds
\begin{align}
\nonumber  &\norm{(x+u_\gamma(x))-(x'+u_\gamma(x')}\leq \norm{(x-x')-\frac 1\gamma(f(x)-f(x'))}\leq \norm{x-x'}+\frac 1\gamma\norm{f(x)-f(x')},\\
\label{EqLip2}  &\norm{u_\gamma(x)-u_\gamma(x')}\leq  2\norm{x-x'}+\frac 1\gamma\norm{f(x)-f(x')}.
\end{align}

Our approach is based on an equivalent reformulation of \eqref{EqVI2ndK} in form of the GE
\begin{equation}\label{EqVI-alt}0\in\F(x,d):=\myvec{f(x)+\partial q(d)\\x-d}\end{equation}
in variables $(x,d)\in\R^n\times \R^n$. Clearly, $\xb$ is a solution of \eqref{EqVI2ndK} if and only if $(\xb,\xb)$ is a solution of \eqref{EqVI-alt}.
\begin{proposition}\label{PropF}\begin{enumerate}
\item[(i)]
  Let $x\in\R^n$, $(d,d^*)\in\gph\partial q$. Then
  \begin{align}
     \nonumber&\lefteqn{\Sp^* \F((x,d),(f(x)+d^*,x-d))}\\
    \label{EqSCD_F2}&=\left\{\rge\left(\left(\begin{matrix}Y^*&0\\0&-I\end{matrix}\right), \left(\begin{matrix}\nabla f(x)^TY^*&-I\\X^*&I\end{matrix}\right)\right)\mv \rge(Y^*,X^*)\in \Sp^*\partial q(d,d^*)\right\}.
  \end{align}
\item[(ii)]Let $\xb$ be a solution to \eqref{EqVI2ndK}. Then the following statements are equivalent:
    \begin{enumerate}
    \item[(a)] $H$ is \SCD regular around $(\xb,0)$.
    \item[(b)] For every $L\in \Sp^* \partial q(\xb, -f(\xb))$  and every  $(Y^*,X^*)\in\M(L)$ the matrix $\nabla f(x)^TY^*+X^*$ is nonsingular.
    \item[(c)] The mapping $\F$ is \SCD regular around $((\xb,\xb),(0,0))$.
    \end{enumerate}
\item[(iii)]Let $\xb$ be a solution to \eqref{EqVI2ndK}. If $\partial q$ is \SCD \ssstar at $(\xb,-f(\xb))$ then $\F$ is \SCD \ssstar at $((\xb,\xb),(0,0))$.
\end{enumerate}
\end{proposition}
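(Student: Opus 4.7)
The plan is to tackle the three parts in sequence, with (i) providing the algebraic template that drives (ii) and (iii). The key structural observation is that $\gph\F$ admits a smooth parametrization over $\gph\partial q$ via
\[
(x,d,d^*)\ \longmapsto\ \bigl((x,d),(f(x)+d^*,\,x-d)\bigr),
\]
so the only non-smoothness of $\F$ is inherited from $\partial q$.

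For (i), I would first verify that $((x,d),(f(x)+d^*,x-d))\in\OO_\F$ whenever $(d,d^*)\in\OO_{\partial q}$: differentiating the parametrization shows that $T_{\gph\F}$ is spanned by the vectors $((u_1,u),(\nabla f(x)u_1+v^*,u_1-u))$ with $u_1\in\R^n$ free and $(u,v^*)\in T_{\gph\partial q}(d,d^*)$, which is exactly a $2n$-dimensional subspace. Choosing a basis $(A,B)$ of $T_{\gph\partial q}(d,d^*)$ writes $T_{\gph\F}=\rge(P_0,Q_0)$ for explicit $2n\times 2n$ blocks $P_0,Q_0$. Then $L_\F^*=S_{2n}L_\F^\perp$ is computed by solving the block equations $P_0^T u^*+Q_0^T v^*=0$; re-expressing the result in terms of a basis $(Y^*,X^*)$ of $L_{\partial q}^*$ produces exactly the $4n\times 2n$ matrix in \eqref{EqSCD_F2}. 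The outer limit as $(d_k,d_k^*)\to(d,d^*)$ in $\OO_{\partial q}$ with $\gph D\partial q(d_k,d_k^*)^*\to L'$ preserves the formula by continuity of the linear algebra and of $\nabla f$.

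For (ii), the equivalence (b)$\Leftrightarrow$(c) is immediate from (i): an element $(y^*,x^*)\in L\in\Sp^*\F(\cdot)$ with $x^*=0$ corresponds to $(t,s)\in\R^{2n}$ satisfying $\nabla f(\xb)^T Y^* t - s = 0$ and $X^* t + s = 0$; summing gives $(\nabla f(\xb)^T Y^*+X^*)t=0$, and the remaining block $(Y^*t,-s)$ vanishes iff $t=0$, since $(Y^*,X^*)\in\M(L')$ has full column rank $n$. For (a)$\Leftrightarrow$(b) I would run the parallel computation for $H$: $\gph H$ has the smooth parametrization $(x,x^*)\mapsto(x,f(x)+x^*)$ over $\gph\partial q$, so at graphically smooth points $T_{\gph H}=\rge(A,\nabla f(x)A+B)$, and a short dualization yields $L_H^*=\rge(Y^*,X^*+\nabla f(x)^T Y^*)$. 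By Proposition \ref{PropC_L}, SCD regularity of $H$ at $(\xb,0)$ is then equivalent to nonsingularity of the second block $X^*+\nabla f(\xb)^T Y^*$ for every basis of every $L'\in\Sp^*\partial q(\xb,-f(\xb))$, which is exactly (b).

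For (iii), I fix $((x,d),(y_1,y_2))\in\gph\F$ near $((\xb,\xb),(0,0))$, so $d^*:=y_1-f(x)\in\partial q(d)$, and pick $(y^*,x^*)\in L$ for any $L\in\Sp^*\F(\cdot)$. By (i), $y^*=(Y^*t,-s)$ and $x^*=(\nabla f(x)^T Y^*t-s,\,X^*t+s)$ for some $(t,s)\in\R^{2n}$ and some $\rge(Y^*,X^*)\in\Sp^*\partial q(d,d^*)$. Expanding $\skalp{x^*,(x,d)-(\xb,\xb)}-\skalp{y^*,(y_1,y_2)}$ and using $y_1=f(x)+d^*$, $y_2=x-d$, the $s$-dependent terms cancel exactly (both contribute $\skalp{s,d-x}$ with opposite signs), leaving
\[
\bigl[\skalp{X^*t,d-\xb}-\skalp{Y^*t,d^*-(-f(\xb))}\bigr]+\skalp{Y^*t,\nabla f(x)(x-\xb)-(f(x)-f(\xb))}.
\]
The bracketed term is the SCD semismooth* residual for $\partial q$ at $(\xb,-f(\xb))$ applied to $(Y^*t,X^*t)\in L'$, hence bounded by $\epsilon\,\norm{(d,d^*)-(\xb,-f(\xb))}\,\norm{(Y^*t,X^*t)}$ on a small enough neighborhood. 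The second term is bounded by $\norm{Y^*t}\cdot\omega(\norm{x-\xb})\cdot\norm{x-\xb}$ with $\omega$ the modulus of continuity of $\nabla f$ at $\xb$, which tends to $0$. The routine estimates $\norm{Y^*t}\le\norm{y^*}$, $\norm{X^*t}\le\norm{x^*}+\norm{y^*}$, and $\norm{(d,d^*)-(\xb,-f(\xb))}\le(2+L)\norm{((x,d),(y_1,y_2))-((\xb,\xb),(0,0))}$ (with $L$ a local Lipschitz constant of $f$) combine to the required SCD semismooth* estimate for $\F$. The main obstacle is the bookkeeping in part (i) --- relating the tangent basis $(A,B)$ of $\gph\partial q$ to a coderivative basis $(Y^*,X^*)$ of $L_{\partial q}^*$ and checking that the limiting objects still lie in $\Z_{2n}$; once (i) is in place, (ii) is pure linear algebra and (iii) reduces to the $s$-cancellation plus a first-order Taylor residual for $f$.
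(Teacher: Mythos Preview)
Your proposal is correct, and parts (ii) and (iii) are essentially the paper's own argument: the same block-matrix nonsingularity check for (ii), and for (iii) the same cancellation of the auxiliary parameter (your $s$, the paper's $c$) leaving the $\partial q$ semismooth$^*$ residual plus the first-order Taylor remainder of $f$, followed by equivalent norm estimates.

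The genuine difference is in part (i). The paper does not compute $\Sp^*\F$ from scratch via the parametrization $(x,d,d^*)\mapsto((x,d),(f(x)+d^*,x-d))$; instead it writes $\F=h+F$ with $h(x,d)=(f(x),x-d)$ smooth and $F(x,d)=\partial q(d)\times\partial g(x)$ (for $g\equiv 0$), and then invokes ready-made SCD calculus rules from \cite{GfrOut21a}: Proposition 3.15 for the smooth perturbation, Proposition 3.14 for the coordinate permutation linking $F$ to $G(x,d)=\partial(g(x)+q(d))$, and the separable-structure Lemma \ref{LemSepStruct} together with Theorem \ref{ThSubdiffConv} to obtain $\Sp^*G$ explicitly. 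These rules already encapsulate the limiting argument over $\OO$-points, so the paper never touches tangent cones directly. Your route is more self-contained and arguably more transparent for this specific $\F$, since the parametrization is a global diffeomorphism onto the hyperplane $\{y_2=x-d\}$ and the dualization $L_\F^*=S_{2n}L_\F^\perp$ is a clean linear-algebra computation; the paper's route, by contrast, is modular and would extend without change to other composite structures. Either way the final formula \eqref{EqSCD_F2} drops out, and the remaining parts proceed identically.
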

\begin{proof} (i)  GE \eqref{EqVI-alt} can be written down in the form
\[0\in \F(x,d)=h(x,d)+F(x,d)\]
where $h(x,d):=(f(x),x-d)$ and $F(x,d):=\partial q(d)\times \partial g(x)$ with $g:\R^n\to\R$ given by $g(x)=0$ for all $x$. By virtue of \cite[Proposition 3.15]{GfrOut21a} we obtain that, at the point $\big((x,d),(f(x)+d^*,x-d)\big)\in\gph\F\subseteq \R^{2n}\times\R^{2n}$ one has
\[\Sp^*\F\big((x,d),(f(x)+d^*,x-d)\big)=\left(\begin{matrix}
  I&0\\
  \nabla h(x,d)^T&I
\end{matrix}\right)\Sp^*F\big((x,d),(d^*,0)\big).\]
Next consider the mapping $G:\R^{2n}\tto\R^{2n}$ given by $G(x,d)=\partial\big(g(x)+q(d)\big)$. Since
\begin{align*}&\gph F=\{\big((x,d),(d^*,x^*))\mv (d,d^*)\in\gph\partial q, (x,x^*)\in\gph\partial g\},\\
 &\gph G=\{\big((x,d),(x^*,d^*))\mv (d,d^*)\in\gph\partial q, (x,x^*)\in\gph\partial g\},\end{align*}
we can employ \cite[Proposition 3.14]{GfrOut21a} with $\Phi(x,d,d^*,x^*):=(x,d,x^*,d^*)$ to obtain that
\[\Sp^*F\big((x,d),(d^*,0))=S_{2n}\nabla\Phi(x,d,d^*,0)^TS_{2n}^T\Sp^*G(\big((x,d),(0,d^*)\big).\]
It remains to compute $\Sp^*G(\big((x,d),(0,d^*)\big)$. By virtue of Theorem \ref{ThSubdiffConv} and Lemma \ref{LemSepStruct} we have
\[\Sp^*G\big((x,d),(0,d^*)\big)=\left\{\rge\left(\left(\begin{matrix}I&0\\0&Y^*\end{matrix}\right),\left(\begin{matrix}0&0\\0&X^*\end{matrix}\right)\right)\mv \rge(Y^*,X^*)\in\Sp^*\partial q(d,d^*)\right\}.\]
Putting these ingredients together we may conclude that
\begin{align*}\Sp^*\F\big((x,d),&(f(x)+d^*,x-d)\big)=\left(\begin{matrix}
  I&0\\
  \nabla h(x,d)^T&I
\end{matrix}\right)S_{2n}\nabla\Phi(x,d,d^*,0)^TS_{2n}^T\Sp^*G(\big((x,d),(0,d^*)\big)\\
&=\left(\begin{matrix}
  0&I&0&0\\
  I&0&0&0\\
  I&\nabla f(x)^T&I&0\\
  -I&0&0&I
\end{matrix}\right)\Sp^*G\big((x,d),(0,d^*)\big)\\
&=\left\{\rge\left[\left(\begin{matrix}
  0&I&0&0\\
  I&0&0&0\\
  I&\nabla f(x)^T&I&0\\
  -I&0&0&I\end{matrix}\right)\left(\begin{matrix}
    I&0\\0&Y^*\\0&0\\0&X^*
  \end{matrix}\right)\right]\mv \rge(Y^*,X^*)\in \Sp^*\partial q(d,d^*)\right\}\\
  &=\left\{\rge\left(\left(\begin{matrix}
    0&Y^*\\I&0
  \end{matrix}\right),\left(\begin{matrix}
    I&\nabla f(x)^TY^*\\-I&X^*\end{matrix}\right)\right)\mv\rge(Y^*,X^*)\in \Sp^*\partial q(d,d^*)\right\}\\
  &=\left\{\rge\left(\left(\begin{matrix}
    0&Y^*\\I&0
  \end{matrix}\right)S_n,\left(\begin{matrix}
    I&\nabla f(x)^TY^*\\-I&X^*\end{matrix}\right)S_n\right)\mv\rge(Y^*,X^*)\in \Sp^*\partial q(d,d^*)\right\}
\end{align*}
leading to formula \eqref{EqSCD_F2}.

(ii) By \cite[Proposition 3.15]{GfrOut21a} we have
\begin{align*}\Sp^* H(\xb,0)&=\left(\begin{matrix}
  I&0\\\nabla f(\xb)^T&I\end{matrix}\right)\Sp^*\partial q(\xb,-f(\xb))\\
  &=\big\{\rge(Y^*,\nabla f(\xb)^TY^*+X^*)\mv \rge (Y^*,X^*)\in \Sp^*\partial q(\xb,-f(\xb))\big\}
\end{align*}
and the equivalence between (a) and (b) is implied by Proposition \ref{PropC_L}. By \eqref{EqSCD_F2}, the mapping $\F$ is \SCD regular around $((\xb,\xb),(0,0))$ if and only if for every pair $Y^*,X^*$ with $\rge(Y^*,X^*)\in \Sp^*\partial q(\xb,-f(\xb))$ the matrix
\[\left(\begin{matrix}\nabla f(x)^TY^*&-I\\X^*&I\end{matrix}\right)=\left(\begin{matrix}\nabla f(x)^TY^*+X^*&-I\\0&I\end{matrix}\right)\left(\begin{matrix}I&0\\X^*&I\end{matrix}\right)\]
is nonsingular and, by the representation above, this holds if and only if $\nabla f(x)^TY^*+X^*$ is nonsingular. Hence, (b) is equivalent to (c).

(iii) Let $f$ be Lipschitz continuous with constant $l$ in some  ball $\B_r(\xb)$ around $\xb$. Consider $\epsilon>0$, choose $\delta_q>0$ such that
\begin{align*}
  &\vert \skalp{e^*, d-\xb}-\skalp{e,d^*+f(\xb)}\vert\leq \frac\epsilon{2\sqrt{2}(l+1)} \norm{(e,e^*)}\norm{(d-\xb,d^*+f(\xb))}\\
   &\qquad\qquad\mbox{for all }(d,d^*)\in\gph \partial q\cap \B_{\delta_q}(\xb,-f(\xb))\mbox{ and all } (e,e^*)\in L\in\Sp^*\partial q(d,d^*)
\end{align*}
and then choose $\delta\leq \min\{\frac {\delta_q}{1+l},r\}$ such that
\[\norm{f(x)-f(\xb)-\nabla f(x)(x-\xb)}\leq \frac\epsilon{2\sqrt{2}(l+1)}\norm{x-\xb},\ x\in \B_\delta(\xb).\]
Consider $((x,d),(y_1,y_2))\in\gph \F\cap \B_\delta((\xb,\xb),(0,0))$, $((z_1,z_2),(z_1^*,z_2^*))\in \bar L\in \Sp^*\F((x,d),(y_1,y_2))$.
Then $y_1=f(x)+d^*$ with $d^*\in\partial q(d)$, $y_2=x-d$ and by \eqref{EqSCD_F2} there are $(e,e^*)\in L\in \Sp^*\partial q(d,d^*)$, $c\in\R^n$ with
$((z_1,z_2),(z_1^*,z_2^*))=((e,-c), (\nabla f(x)^Te-c,e^*+c))$. Then $\norm{x-\xb}\leq \delta$ and
\begin{align*}\norm{(d-\xb,d^*+f(\xb))}&\leq\norm{(d-\xb,y_1)}+\norm{f(x)-f(\xb)}\\
&\leq \norm{((x,d),(y_1,y_2))-((\xb,\xb),(0,0))}+\norm{f(x)-f(\xb)}\leq \delta+l\delta\leq \delta_q.\end{align*}
It follows that
\begin{align*}\lefteqn{\vert\skalp{(z_1,z_2),(y_1,y_2)}-\skalp{(z_1^*,z_2^*),(x,d)-(\xb,\xb)}\vert}\\
&=\vert\skalp{e,f(x)+d^*}-\skalp{c,x-d}-\skalp{\nabla f(x)^Te-c,x-\xb}-\skalp{e^*+c,d-\xb}\vert\\
&\leq\vert\skalp{e,f(x)-f(\xb)-\nabla f(x)(x-\xb)}\vert+\vert\skalp{e, d^*+f(\xb)}+\skalp{e^*,d-\xb}\vert\\
&\leq \frac\epsilon{2\sqrt{2}(l+1)}\norm{e}\norm{x-\xb}+\frac \epsilon{2\sqrt{2}(l+1)}\norm{(e,e^*)}\norm{(d-\xb, d^*+f(\xb))}\\
&\leq\frac\epsilon{\sqrt{2}(l+1)}\norm{(e,e^*)}\norm{(x-\xb,d-\xb,d^*+f(\xb))}\\
&\leq \frac\epsilon{\sqrt{2}(l+1)}\norm{(e,e^*)}\big(\norm{(x-\xb,d-\xb,d^*+f(x))}+\norm{f(x)-f(\xb)}\big)\\
&\leq \frac \epsilon {\sqrt{2}}\norm{(e,e^*)}\norm{(x-\xb,d-\xb,d^*+f(x),x-d)}=\frac \epsilon{\sqrt{2}}\norm{(e,e^*)}\norm{((x,d),(y_1,y_2))-((\xb,\xb),(0,0))}.\end{align*}
Since $\min_{c}\norm{c}^2+\norm{e^*-c}^2=\frac 12\norm{e^*}^2$, we obtain $\norm{((z_1,z_2),(z_1^*,z_2^*))}^2\geq \norm{e}^2+\frac12\norm{e^*}^2\geq \frac 12\norm{(e,e^*)}^2$ and
\[\vert\skalp{(z_1,z_2),(y_1,y_2)}-\skalp{(z_1^*,z_2^*),(x,d)-(\xb,\xb)}\vert\leq \epsilon\norm{((z_1,z_2),(z_1^*,z_2^*))}\norm{((x,d),(y_1,y_2))-((\xb,\xb),(0,0))}.\]
Thus $\F$ is \SCD \ssstar at $((\xb,\xb),(0,0))$.
\end{proof}
We proceed now with the description of the approximation step. Given $(x^{(k)},d^{(k)})$ and a scaling parameter $\gamma^{(k)}$, we compute $u^{(k)}:=u_{\gamma^{(k)}}(x^{(k)})$  and  set
\begin{equation}
  \label{EqResApprStep1}\hat x^{(k)}= x^{(k)},\ \hat d^{(k)}=x^{(k)}+u^{(k)}\quad \mbox{and}\quad \hat y^{(k)}=(\hat y_1^{(k)},\hat y_2^{(k)})=-(\gamma^{(k)} u^{(k)},u^{(k)}).
\end{equation}
We observe that
\[((\hat x^{(k)},\hat d^{(k)}),(\hat y_1^{(k)},\hat y_2^{(k)}))\in \gph \F,\]
which follows immediately from the first-order optimality condition \eqref{EqOptCond_u}. Note that the outcome of the approximation step does not depend on the auxiliary variable $d^{(k)}$.  In order to apply Theorem \ref{ThConvSSNewton}, we have to show the existence of a real $\eta>0$ such that the estimate
\begin{equation}\label{EqEstApprStep1}
  \norm{((\hat x^{(k)}-\xb, \hat d^{(k)}-\xb), \hat y^{(k)}}\leq \eta\norm{(x^{(k)}-\xb,d^{(k)}-\xb)},
\end{equation}
corresponding to \eqref{EqBndApprStep}, holds for all $(x^{(k)},d^{(k)})$ with $x^{(k)}$ close to $\xb$. By virtue of \eqref{EqResApprStep1} the left-hand side of \eqref{EqEstApprStep1} amounts to
\begin{align}\nonumber
  \norm{((\hat x^{(k)}-\xb, \hat x^{(k)}+ u^{(k)}-\xb),(-\gamma^{(k)} u^{(k)}, -u^{(k)})}&\leq \norm{(\hat x^{(k)}-\xb, \hat x^{(k)}-\xb,0,0)}+\norm{(0, u^{(k)},-\gamma^{(k)} u^{(k)}, -u^{(k)})}\\
  \label{EqAuxBnd1}&\leq  2\norm{\hat x^{(k)}-\xb}+(2+\gamma^{(k)})\norm{u^{(k)}}.
\end{align}
Since $u_{\gamma^{(k)}}(\xb)=0$, we obtain from \eqref{EqLip2} the bounds
\begin{align*}&\norm{\hat d^{(k)}-\xb}\leq \norm{x^{(k)}-\xb}+\frac 1{\gamma^{(k)}}\norm{f(x^{(k)})-f(\xb)}\\
&\norm{u^{(k)}}\leq 2\norm{x^{(k)}-\xb}+\frac 1{\gamma^{(k)}}\norm{f(x^{(k)})-f(\xb)}.
\end{align*}
The latter estimate, together with \eqref{EqAuxBnd1}, imply
\begin{align}\nonumber\norm{((\hat x^{(k)}-\xb, \hat d^{(k)}-\xb), \hat y^{(k)}}&\leq \Big(2+(2+\gamma^{(k)})\big(2+ \frac l{\gamma^{(k)}}\big)\Big)\norm{x^{(k)}-\xb}\\
\label{EqAppr1L}&\leq  \Big(2+(2+\gamma^{(k)})\big(2+ \frac l{\gamma^{(k)}}\big)\Big)\norm{(x^{(k)}-\xb,d^{(k)}-\xb)},\end{align}
where $l$ is the Lipschitz constant of $f$ on a neighborhood of $\xb$. Thus the desired inequality \eqref{EqEstApprStep1} holds, as long as $\gamma^{(k)}$ remains bounded and bounded away from zero.

Next we describe the Newton step. According to Algorithm \ref{AlgNewton} and \eqref{EqSCD_F2}, we have to compute a pair ${Y^*}^{(k)},{X^*}^{(k)}$ with $\rge({Y^*}^{(k)},{X^*}^{(k)})\in\Sp^*\partial q(\hat d^{(k)},\hat d^*{}^{(k)}) $ and then to solve the linear system
\begin{equation*}
   \left(\begin{matrix}{{Y^*}^{(k)}}^T\nabla f(x^{(k)})&{{X^*}^{(k)}}^T\\-I&I\end{matrix}\right)\myvec{\Delta x^{(k)}\\\Delta d^{(k)}}=-\left(\begin{matrix}{{Y^*}^{(k)}}^T&0\\0&-I\end{matrix}\right)\myvec{\hat y_1^{(k)}\\\hat y_2^{(k)}}
\end{equation*}
Simple algebraic transformations yield
\begin{equation}\label{EqNewtonSyst}({{Y^*}^{(k)}}^T\nabla f(x^{(k)})+{{X^*}^{(k)}}^T)\Delta x^{(k)}= -({{Y^*}^{(k)}}^T\hat y_1^{(k)}+{{X^*}^{(k)}}^T\hat y_2^{(k)})
\end{equation}
and $\Delta d^{(k)}=\hat y_2^{(k)}+\Delta x^{(k)}$. Using \eqref{EqResApprStep1} the system \eqref{EqNewtonSyst} amounts to
\begin{equation}\label{EqNewtonSyst1}({{Y^*}^{(k)}}^T\nabla f(x)+{{X^*}^{(k)}}^T)\Delta x^{(k)}= (\gamma^{(k)}{{Y^*}^{(k)}}^T+{{X^*}^{(k)}}^T)u^{(k)}.\end{equation}
Having computed the Newton direction, the new iterate is given by $x^{(k+1)}=x^{(k)}+\Delta x^{(k)}=d^{(k+1)}$. We summarize our considerations in the following algorithm, where the auxiliary variable $d^{(k)}$ is omitted.
\begin{algorithm}[\ssstar Newton Method for VI of the second kind \eqref{EqVI2ndK}]\label{AlgSSNewtVIBasic}\mbox{ }\\
1. Choose starting point $x^{(0)}$ and set the iteration counter $k:=0$.\\
2. If $0\in H(x^{(k)})$ stop the algorithm.\\
3. \begin{minipage}[t]{\myAlgBox}Select a parameter $\gamma^{(k)}>0$, compute $u^{(k)}:=u_{\gamma^{(k)}}(x^{(k)})$ and set $\hat d^{(k)}:=x^{(k)}+u^{(k)}$, ${\hat d^*}{}^{(k)}:=-\gamma^{(k)}u^{(k)}-f(x^{(k)})$.\end{minipage}\\
4. \begin{minipage}[t]{\myAlgBox} Select $({X^*}^{(k)}, {Y^*}^{(k)})$ with $\rge({Y^*}^{(k)}, {X^*}^{(k)} )\in\Sp^* \partial q((\hat d^{(k)},\hat d^*{}^{(k)})$, compute the Newton direction $\Delta x^{(k)}$ from  \eqref{EqNewtonSyst1} and set $x^{(k+1)}=x^{(k)}+\Delta x^{(k)}$.\end{minipage}\\
5. Increase the iteration counter $k:=k+1$ and go to Step 2.
\end{algorithm}
Combining Theorem \ref{ThConvSSNewton} with Proposition \ref{PropF} we obtain the following convergence result.
\begin{theorem}\label{ThConvSSNewtonVI}Let $\xb\in H^{-1}(0)$ be a solution of \eqref{EqVI2ndK} and assume that $\partial q$ is \SCD \ssstar at $(\xb,-f(\xb)$. Further suppose that $H$ is \SCD regular around $(\xb,0)$. Then for every pair $\underline{\gamma},\bar\gamma$ with $0<\underline{\gamma}\leq \bar\gamma$ there exists a neighborhood $U$ of $\xb$ such that for every starting point $x^{(0)}\in U$ Algorithm \ref{AlgSSNewtVIBasic} produces a sequence $x^{(k)}$ converging superlinearly to $\xb$, provided we choose in every iteration step $\gamma^{(k)}\in [\underline{\gamma},\bar\gamma]$.
\end{theorem}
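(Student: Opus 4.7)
The plan is to cast Algorithm~\ref{AlgSSNewtVIBasic} as an implementation of Algorithm~\ref{AlgNewton} for the reformulated mapping $\F$ from \eqref{EqVI-alt} with iterates $(x^{(k)},d^{(k)})$ satisfying $d^{(k)}=x^{(k)}$, and then invoke Theorem~\ref{ThConvSSNewton}.

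First, I transfer the hypotheses from $H$ to $\F$ using Proposition~\ref{PropF}. Part~(ii) gives that \SCD regularity of $H$ around $(\xb,0)$ is equivalent to \SCD regularity of $\F$ around $((\xb,\xb),(0,0))$; part~(iii) promotes the \SCD \ssstar property of $\partial q$ at $(\xb,-f(\xb))$ to \SCD \ssstar for $\F$ at $((\xb,\xb),(0,0))$. Hence both standing assumptions of Theorem~\ref{ThConvSSNewton} are available for $\F$ at this point.

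Second, I verify that the steps of Algorithm~\ref{AlgSSNewtVIBasic} coincide with those of Algorithm~\ref{AlgNewton} applied to $\F$. Starting each iteration with $d^{(k)}=x^{(k)}$, the approximation step assigns $(\hat x^{(k)},\hat d^{(k)},\hat y^{(k)})$ by \eqref{EqResApprStep1}; the first-order optimality condition \eqref{EqOptCond_u} is exactly the statement that this triple lies in $\gph\F$. The Newton step in Algorithm~\ref{AlgSSNewtVIBasic} solves \eqref{EqNewtonSyst1}, which is the algebraic reduction of the $\F$-Newton system associated with the subspace from \eqref{EqSCD_F2} generated by the basis $({Y^*}^{(k)},{X^*}^{(k)})$ picked in step~4. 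Moreover, setting $x^{(k+1)}=d^{(k+1)}=x^{(k)}+\Delta x^{(k)}$ preserves the invariant $d^{(k+1)}=x^{(k+1)}$, so the algorithm can indeed be described on the single variable $x^{(k)}$.

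Third, I supply the uniform constant $\eta$ required to invoke Theorem~\ref{ThConvSSNewton}. The estimate \eqref{EqAppr1L}, applied with any Lipschitz constant $l$ of $f$ on a neighborhood of $\xb$, shows that for every $x^{(k)}$ near $\xb$ and every $\gamma^{(k)}\in[\underline{\gamma},\bar\gamma]$,
\[
\bigl\|((\hat x^{(k)}-\xb,\hat d^{(k)}-\xb),\hat y^{(k)})\bigr\|\leq\Big(2+(2+\gamma^{(k)})\bigl(2+\tfrac{l}{\gamma^{(k)}}\bigr)\Big)\,\norm{(x^{(k)}-\xb,d^{(k)}-\xb)}.
\]
Since the prefactor is continuous in $\gamma^{(k)}$, it attains a finite maximum $\eta$ on the compact interval $[\underline{\gamma},\bar\gamma]$, yielding \eqref{EqBndApprStep} for $\F$ with this single $\eta$ independently of the chosen $\gamma^{(k)}$.

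Finally, Theorem~\ref{ThConvSSNewton} (applied to $\F$ with this $\eta$) furnishes a neighborhood $W$ of $(\xb,\xb)$ such that whenever $(x^{(0)},d^{(0)})\in W$, every admissible run of Algorithm~\ref{AlgNewton} either terminates at a solution or converges superlinearly to $(\xb,\xb)$. Taking $U:=\{x\mid (x,x)\in W\}$ and exploiting the invariant $d^{(k)}=x^{(k)}$, the corresponding $x^{(k)}$ sequence produced by Algorithm~\ref{AlgSSNewtVIBasic} converges superlinearly to $\xb$. The only delicate point is that $U$ must be independent of the particular choice of $\gamma^{(k)}$ inside $[\underline{\gamma},\bar\gamma]$; this is resolved precisely by the uniform bound on $\eta$ obtained in the previous step, so that a single $W$ from Theorem~\ref{ThConvSSNewton} serves all admissible parameter sequences.
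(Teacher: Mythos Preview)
Your proposal is correct and follows essentially the same approach as the paper, which simply states that the result is obtained by combining Theorem~\ref{ThConvSSNewton} with Proposition~\ref{PropF}. You have spelled out in detail exactly what this combination entails: transferring the hypotheses to $\F$ via Proposition~\ref{PropF}(ii),(iii), identifying Algorithm~\ref{AlgSSNewtVIBasic} as an instance of Algorithm~\ref{AlgNewton} applied to $\F$, and using the bound \eqref{EqAppr1L} together with $\gamma^{(k)}\in[\underline{\gamma},\bar\gamma]$ to obtain a uniform $\eta$ so that a single neighborhood from Theorem~\ref{ThConvSSNewton} works for all admissible parameter sequences.
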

\section{Globalization}
In the preceding section we showed locally superlinear convergence of our implementation of the semismooth* Newton method. However, we do not only want fast local convergence but also convergence from arbitrary starting points. To this end we consider  a non-monotone line-search heuristic as well as hybrid approaches which combine this heuristic with some globally convergent method.

To perform the line search we need some merit function. Similar to the damped Newton method for solving smooth equations, we use some kind of residual. Here we  define the residual by means of the approximation step, i.e., given $x$ and $\gamma>0$, we use
\begin{align}
  \label{EqRes1}
  r_\gamma(x):=\norm{(\hat y_1^{(k)},\hat y_2^{(k)})}=\norm{(\gamma u_\gamma(x),u_\gamma(x))}=\sqrt{1+\gamma^2}\norm{u_\gamma(x)}
\end{align}
as motivated by \eqref{EqResApprStep1}. Note that every evaluation of the residual function $r_\gamma(x)$ requires the computation of $u_\gamma(x)$.

Our globalization approaches are intended mainly for the case when the variational inequality \eqref{EqVI2ndK} does not correspond to the solution of some nonsmooth optimization problem. For the solution of optimization problems, namely, there exist  more efficient globalization strategies based on merit functions derived from the objective and this case will be treated in a forthcoming paper.

\subsection{\label{SecHeuristic}A non-monotone line-search heuristic}
In general, we replace the full Newton step 4. in Algorithm \ref{AlgSSNewtVIBasic} by a damped step of the form
\[x^{(k+1)}=\hat x^{(k)}+\alpha^{(k)}\triangle x^{(k)}, \]
where $\alpha^{(k)}\in(0,1]$ is chosen such that the line search condition
\begin{equation}\label{EqLineSearch}r_{\gamma^{(k)}}(\hat x^{(k)}+\alpha^{(k)}s^{(k)})\leq (1+\delta^{(k)}-\mu \alpha^{(k)})r_{\gamma^{(k)}}(\hat x^{(k)})\end{equation}
is fulfilled, where $\mu\in(0,1)$ and $\delta^{(k)}$ is a given sequence of positive numbers converging to $0$.

Obviously, the step size $\alpha^{(k)}$ exists since the residual function $r_\gamma(x)$ is continuous. However, it is not guaranteed that the residual is decreasing, i.e., that $r_{\gamma^{(k)}}(x^{(k+1)})<r_{\gamma^{(k)}}(\hat x^{(k)})$.

The computation of $\alpha^{(k)}$ can be done in the usual way. For instance, we can choose the first element of a sequence $(\beta_j)$, which fulfills  $\beta_0=1$ and converges monotonically to zero, such that the line search condition \eqref{EqLineSearch} is fulfilled.

 For $\gamma^{(k)}$ we suggest a choice with $\gamma^{(k)}\approx\norm{\nabla f(x^{(k)}}$. Since the spectral norm $\norm{\nabla f(x^{(k)}}$ is difficult to compute, we use an easy computable norm instead, e.g., the  maximum absolute column sum norm $\norm{\nabla f(x^{(k)})}_1$.

 Although we are not able to show convergence properties for this heuristic, it showed good convergence properties in practice.

 \subsection{
 Globally convergent hybrid approaches}

 In this subsection we suggest a combination of the \ssstar Newton method with some existing globally convergent method which exhibits both global convergence and local superlinear convergence. Assume that the used globally convergent method is formally given by some mapping $\T:\R^n\to\R^n$, which computes from some iterate $x^{(k)}$ the next iterate by
 \[x^{(k+1)}=\T(x^{(k)}).\]
 Of course, $\T$ must depend on the problem \eqref{EqVI2ndK} which we want to solve and will presumably depend also on some additional parameters which control the behavior of the method. In our notation we neglect to a large extent these dependencies.

 Consider the following well-known examples for such a mapping $\T$.
 \begin{enumerate}
   \item For the forward-backward splitting method, the mapping $\T$ is given by
   \begin{equation*}
     \T^{\rm FB}_\lambda(x)=(I+\lambda\partial q)^{-1}(I-\lambda f)(x),
   \end{equation*}
   where $\lambda>0$ is a suitable prarameter. Note that $\T^{\rm FB}_\lambda(x)=x+u_{1/\lambda}(x)$.
   \item For the Douglas-Rachford splitting method we  have
   \begin{equation*}
     \T^{\rm DR}_\lambda(x)=(I+\lambda f)^{-1}\Big((I+\lambda\partial q)^{-1}(I-\lambda f) +\lambda f\Big)(x)=(I+\lambda f)^{-1}(\T^{\rm FB}_\lambda+\lambda f)(x),
   \end{equation*}
   where $\lambda>0$ is again some parameter.
   \item A third method is given by the hybrid projection-proximal point algorithm due to Solodov and Svaiter \cite{SolSv99}. Let $x$ and $\gamma>0$ be given and consider $\hat x=\T^{\rm FB}_{1/\gamma}(x)$, i.e. $\hat x-x=u_{\gamma}(x)$. Then $0\in \gamma(\hat x-x)+f(x)+\partial q(\hat x)$ and consequently
       \begin{equation*}
         0\in v + \gamma(\hat x-x)+(f(x)-f(\hat x)),
       \end{equation*}
       where $v:=-\gamma(\hat x-x)+f(\hat x)-f(x)\in H(\hat x).$
   Then, in the hybrid projection-proximal point algorithm the mapping $\T$ is given by the projection of $x$ on the hyperplane $\{z\mv \skalp{v,z-\hat x}=0\}$, i.e.,
   \begin{equation*}
   \T^{\rm PM}_\gamma(x)=x-\frac{\skalp{v,\hat x-x}}{\norm{v}^2}v.
   \end{equation*}
 \end{enumerate}
 Note that in principle we could also use other methods which depend not only on the last iterate like the golden ratio algorithm \cite{Mal20}, but for ease of presentation  these methods are omitted.
 \begin{algorithm}[Globally convergent hybrid \ssstar Newton method for VI of the second kind]\label{AlgSSNewt_Hybrid}\mbox{ }\\
Input: A method for solving \eqref{EqVI2ndK} given by the iteration operator $\T:\R^n\to\R^n$, a starting point $x^{(0)}$,  line search parameter $0<\nu<1$, a sequence $\delta^{(k)}\in (0,1)$, a sequence $\beta_j\downarrow 0$ with $\beta_0=1$  and a stopping tolerance $\epsilon_{tol}>0$.\\
1. Choose $\gamma^{(0)}$, set $r_N^{(0)}:= r_{\gamma^{(0)}}( x^{(0)})$ and set the counters $k:=0$, $l:=0$.\\
2. If $r_{\gamma^{(k)}}(x^{(k)})\leq\epsilon_{tol}$ stop the algorithm.\\
3. \parbox[t]{\myAlgBox}{Perform the approximation step as in Algorithm \ref{AlgSSNewtVIBasic} and compute  the Newton direction $\Delta x^{(k)}$ by solving \eqref{EqNewtonSyst1}. Try to determine the step size $\alpha^{(k)}$ as the first element from the sequence $\beta_j$ satisfying $\beta_j>\delta^{(l)}$ and
\[r_{\gamma^{(k)}}(x^{(k)}+\beta_j\Delta x^{(k)})\leq (1-\nu \beta_j) r_N^{(l)}.\]}
4. If both $\Delta x^{(k)}$ and $\alpha^{(k)}$ exist, set $x^{(k+1)}=x^{(k)}+ \alpha^{(k)} \Delta x^{(k)}$, $r_N^{(l+1)} =r_{\gamma^{(k)}}(x^{(k+1)})$ and increase $l:=l+1$.\\
5. Otherwise, if the Newton direction $\Delta x^{(k)}$ or the step length  $\alpha^{(k)}$ does not exist, compute $x^{(k+1)}=\T(x^{(k)})$.\\
6. Update $\gamma^{(k+1)}$ and increase the iteration counter $k:=k+1$ and go to Step 2.
\end{algorithm}
In what follows we denote by $k_l$ the subsequence of iterations where the new iterate $x^{k+1}$ is computed by the damped Newton Step 4, i.e.,
\[x^{(k_l)}= x^{(k_l-1)}+\alpha^{(k_l-1)}\Delta x^{(k_l-1)},\ r_N^{(l)}=r_{\gamma^{(k_l-1)}}(x^{(k_l)}).\]
\begin{theorem}
  Assume that the GE \eqref{EqVI2ndK} has at least one solution and assume that the solution method given by the iteration mapping $\T:\R^n\to\R^n$ has the property that for every starting point $y^{(0)}\in\R^n$ the sequence $y^{(k)}$, given by the recursion $y^{(k+1)}=\T(y^{(k)})$, has at least one accumulation point which is a solution to the GE \eqref{EqVI2ndK}. Then for every starting point $x^{(0)}$ the sequence $x^{(k)}$ produced by Algorithm \ref{AlgSSNewt_Hybrid} with $\epsilon_{tol}=0$ and $\sum_{k=0}^\infty \delta^{(k)}=\infty$ has the following properties.
  \begin{enumerate}
  \item[(i)] If the Newton step is accepted only finitely many times in step 4, then the sequence $x^{(k)}$ has at least one accumulation point which solves \eqref{EqVI2ndK}.
  \item[(ii)] If the Newton step is accepted infinitely many times in step 4, then every accumulation point of the subsequence $x^{(k_l)}$ is a solution to \eqref{EqVI2ndK}.
  \item[(iii)] If there exists an accumulation point $\xb$ of the sequence $x^{(k)}$ which solves \eqref{EqVI2ndK}, the mapping $H$ is  \SCD regular around $(\xb,0)$ and $\partial q$ is  \SCD \ssstar at $(\xb,-f(\xb))$, then the sequence $x^{(k)}$ converges superlinearly to $\xb$ and the Newton step in step 3 is accepted with step length $\alpha^{(k)}=1$ for all $k$ sufficiently large, provided the sequence $\gamma^{(k)}$ satisfies
      \begin{equation*}
      0<\underline{\gamma}\leq \gamma^{(k)}\leq\bar\gamma\ \forall k
      \end{equation*}
       for some positive reals $\underline{\gamma},\bar\gamma$.
  \end{enumerate}
\end{theorem}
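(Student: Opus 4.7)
Parts (i) and (ii) are straightforward consequences of the construction. For (i), if step 4 is taken only finitely often, then from some index $k_0$ onward $x^{(k+1)}=\T(x^{(k)})$, so $(x^{(k)})_{k\ge k_0}$ is exactly the pure $\T$-iteration starting at $x^{(k_0)}$ and the assumed accumulation-point property of $\T$ yields the claim. For (ii), whenever step 4 succeeds the line-search condition together with $\alpha^{(k_l-1)}>\delta^{(l-1)}$ gives $r_N^{(l)}\le(1-\nu\delta^{(l-1)})r_N^{(l-1)}$; since $\sum_l\delta^{(l)}=\infty$, telescoping forces $r_N^{(l)}\to 0$ and hence $u_{\gamma^{(k_l-1)}}(x^{(k_l)})\to 0$. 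Passing to the limit in the first-order condition \eqref{EqOptCond_u} along any convergent sub-subsequence of $(x^{(k_l)})$, using outer semicontinuity of $\gph\partial q$, continuity of $f$ and $\gamma^{(k)}\in[\underline\gamma,\bar\gamma]$, every such accumulation point solves \eqref{EqVI2ndK}.

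The real work is in (iii), where I would collect three local ingredients. Step A: Theorem \ref{ThConvSSNewtonVI} furnishes a neighborhood $U$ of $\xb$ and a function $\phi$ with $\phi(t)/t\to 0$ such that the Newton direction from \eqref{EqNewtonSyst1} satisfies $\|x+\Delta x-\xb\|\le\phi(\|x-\xb\|)$ for every $x\in U$ and every $\gamma\in[\underline\gamma,\bar\gamma]$. Step B: from \eqref{EqLip2} and $u_\gamma(\xb)=0$ one obtains $r_\gamma(x)\le M\|x-\xb\|$ uniformly in $\gamma\in[\underline\gamma,\bar\gamma]$, and the fact noted after Theorem \ref{ThConvSSNewton} that SCD regularity around $(\xb,0)$ combined with SCD \ssstar of $\partial q$ delivers strong metric subregularity of $H$ at $(\xb,0)$ allows, through the inclusion $-\gamma u_\gamma(x)-f(x)\in\partial q(x+u_\gamma(x))$ from \eqref{EqOptCond_u}, to derive a matching lower bound $r_\gamma(x)\ge\mu\|x-\xb\|$. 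Step C: shrinking $U$ so that $M\phi(\|x-\xb\|)\le(1-\nu)\mu\|x-\xb\|$ throughout $U$ yields the key one-step estimate
\[r_{\gamma'}(x+\Delta x)\le M\|x+\Delta x-\xb\|\le M\phi(\|x-\xb\|)\le(1-\nu)\mu\|x-\xb\|\le(1-\nu)\,r_\gamma(x),\]
valid for all $x\in U$ and $\gamma,\gamma'\in[\underline\gamma,\bar\gamma]$.

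With these in hand the rest of (iii) becomes an induction. The claim is: if at some $k_0$ one has $x^{(k_0)}\in U$ and $r_N^{(l_0)}\ge r_{\gamma^{(k_0-1)}}(x^{(k_0)})$, then the displayed estimate forces acceptance of the undamped Newton step; after the update $r_N^{(l_0+1)}=r_{\gamma^{(k_0)}}(x^{(k_0+1)})$ and $x^{(k_0+1)}\in U$ (by a further shrinkage of $U$), the hypothesis is reproduced at $k_0+1$ and the induction proceeds, giving $\alpha^{(k)}=1$ for every $k\ge k_0$; the superlinear rate then follows from Step A. The main obstacle is the production of the starting index $k_0$, because the bookkeeping variable $r_N^{(l)}$ is updated only at accepted Newton steps and may couple the current iteration to an arbitrarily old one. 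I would close this by dovetailing with (i)--(ii): if step 4 is accepted infinitely often, part (ii) gives $r_N^{(l)}\to 0$, which together with the Step-B lower bound forces $x^{(k_l)}\to\xb$ along a sub-subsequence (the isolation of $\xb$, implied by SCD regularity, excludes any other limit), providing $k_0$ directly; if step 4 is accepted only finitely often, part (i) together with continuity of $r_\gamma$ at $\xb$ and the accumulation of $(x^{(k)})$ at $\xb$ would eventually make a Newton attempt pass the line search, contradicting only-finite acceptance. Tying the old residual $r_N^{(l)}$ to the current iterate $x^{(k)}$ through the uniform Step-B bounds is where the proof spends most of its care.
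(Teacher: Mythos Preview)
Your outline follows the paper's proof closely. Parts (i) and (ii) match; one correction in (ii): the hypothesis $\gamma^{(k)}\in[\underline\gamma,\bar\gamma]$ belongs only to part (iii), so you may not invoke it here. The paper avoids it by noting that $r_N^{(l)}=\sqrt{1+(\gamma^{(k_l-1)})^2}\,\|u_{\gamma^{(k_l-1)}}(x^{(k_l)})\|\to 0$ already forces both $\|u_{\gamma^{(k_l-1)}}(x^{(k_l)})\|\to 0$ and $\gamma^{(k_l-1)}\|u_{\gamma^{(k_l-1)}}(x^{(k_l)})\|\to 0$, which is exactly what is needed to pass to the limit in \eqref{EqOptCond_u}.

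For (iii) the paper uses the same three local ingredients you list, but it runs the whole argument through the extended mapping $\F$ of \eqref{EqVI-alt} rather than through $H$. Proposition \ref{PropF} transfers SCD regularity and the SCD \ssstar property to $\F$ at $((\xb,\xb),(0,0))$; then \cite[Theorem 6.2]{GfrOut21a} gives simultaneously strong metric subregularity of $\F$ and a uniform bound $\|C_L\|\le\kappa$ on a neighborhood, so Proposition \ref{PropConvNewton} applied to $\F$ yields the superlinear one-step estimate directly. Your Step~B lower bound via $H$ can be made to work, but the paper gets it in one line from $\|x^{(k)}-\xb\|\le\|(\hat x^{(k)},\hat d^{(k)})-(\xb,\xb)\|\le\kappa\,\dist{(0,0),\F(\hat x^{(k)},\hat d^{(k)})}\le\kappa\|\hat y^{(k)}\|$, which is cleaner because the residual $\hat y^{(k)}$ is an element of $\F(\hat x^{(k)},\hat d^{(k)})$ by construction.

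You are right that tying $r_N^{(l)}$ to the current iterate is the delicate point. Interestingly, the paper does \emph{not} carry out the bookkeeping you worry about: after establishing $r_{\gamma^{(k)}}(x^{(k)}+\Delta x^{(k)})\le(1-\nu)\,r_{\gamma^{(k)}}(x^{(k)})$ whenever $x^{(k)}\in\B_{\bar\delta}(\xb)$, it takes the first index $\bar k$ with $x^{(\bar k)}\in\B_{\bar\delta}(\xb)$ and asserts acceptance with $\alpha^{(k)}=1$ from then on, without comparing $r_{\gamma^{(k)}}(x^{(k)})$ to $r_N^{(l)}$. So your dovetailing via (i)--(ii) is actually adding rigor the paper omits; just note that your fix still leaves a loose end, since $\xb$ is assumed to be an accumulation point of the full sequence, not of the Newton subsequence $x^{(k_l)}$, and isolation of $\xb$ in $H^{-1}(0)$ does not by itself force the latter to approach $\xb$.
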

\begin{proof}
  The first statement is an immediate consequence of our assumption on $\T$. In order to show the second statement, observe that the sequence $r_N^{(l)}$ satisfies
  $r_N^{(l+1)}\leq (1-\nu\delta^{(l)})r_N^{(l)}$ implying
  \[\lim_{l\to\infty} \ln(r_N^{(l+1)})-\ln(r_N^{(0)}) \leq\lim_{l\to\infty}\sum_{i=0}^l\ln(1-\nu\delta^{(i)})\leq -\lim_{l\to\infty}\sum_{i=0}^l\nu \delta^{(i)}=-\infty.\]
  Thus $\lim_{l\to\infty}r_N^{(l)}=\lim_{l\to\infty}\sqrt{1+{\gamma^{(k_l-1)}}^2}\norm{u_{\gamma^{(k_l-1)}}(x^{(k_l)})}=0$ and we can conclude that \[\lim_{l\to\infty}\norm{u_{\gamma^{(k_l-1)}}(x^{(k_l)})}=\lim_{l\to\infty}\gamma^{(k_l-1)}\norm{u_{\gamma^{(k_l-1)}}(x^{(k_l)})}=0.\]
   Together with the inclusion
  \[0\in \gamma^{(k_l-1)}u_{\gamma^{(k_l-1)}}(x^{(k_l)})+f(x^{(k_l)})+ \partial q(x^{(k_l)}+u_{\gamma^{(k_l-1)}}(x^{(k_l)})),\]
  the continuity of $f$ and the closedness of $\gph\partial q$, it follows that $0\in f(\xb)+\partial q(\xb)$ holds for every accumulation point $\xb$ of the subsequence $x^{(k_l)}$. This proves our second assertion.

  Finally we want to show (iii). Assume that $\xb$ is an accumulation point of the sequence $x^{(k)}$ such that the mapping $H$ is \SCD regular around $(\xb,0)$ and $\partial q$ is  \SCD \ssstar at $(\xb,-f(\xb))$.  By Proposition \ref{PropF} the mapping $\F$ is \SCD regular and \SCD \ssstar at $((\xb,\xb),(0,0))$. By invoking \cite[Theorem 6.2]{GfrOut21a}, the mapping $\F$ is strongly metrically subregular at $((\xb,\xb),(0,0))$ and, moreover, there is some $\kappa>0$ and some neighborhoods $U$ of $(\xb,\xb)$ and $V$ of $(0,0)$ such that
  \begin{align}\label{EqAuxStrMetrSubr}&\norm{(x,d)-(\xb,\xb)}\leq \kappa \dist{(0,0),\F(x,d)}\ \forall (x,d)\in U,\\
  \label{EqAuxSCDReg}&L\in\Z_{2n}^{\rm reg}\mbox{ and }\norm{C_L}\leq \kappa\ \forall L\in\Sp^*\F ((x,d),(y_1,y_2))\ \forall ((x,d),(y_1,y_2)) \in U\times V.
  \end{align}
  Thus, whenever $((\hat x^{(k)},\hat d^{(k)}),\hat y^{(k)})\in U\times V$, the Newton direction $(\Delta x^{(k)},\Delta d^{(k)})$ exists and satisfies $\norm{(\Delta x^{(k)},\Delta d^{(k)}}\leq \kappa\norm{\hat y^{(k)}}$.

  By Proposition \ref{PropConvNewton} and \eqref{EqAuxSCDReg}, for every $\epsilon>0$ there is some $\delta>0$ such that
  \begin{align}\nonumber\norm{\hat x^{(k)}+\Delta x^{(k)}-\xb}&\leq \norm{\myvec{\hat x^{(k)}+\Delta x^{(k)}-\xb\\\hat d^{(k)}+\Delta d^{(k)}-\xb}}\leq \epsilon \sqrt{2n(1+\kappa^2)}\norm{((\hat x^{(k)}-\xb,\hat d^{(k)}-\xb),\hat y^{(k)})}
  \end{align}
  whenever $((\hat x^{(k)},\hat d^{(k)}),\hat y^{(k)})\in \B_\delta((\xb,\xb),(0,0))$. Thus we  can find some $\delta'\in(0,1]$ such that $\B_{\delta'}((\xb,\xb),(0,0))\subset U\times V$ and
  \[\norm{\hat x^{(k)}+\Delta x^{(k)}-\xb}\leq \min\Big\{\frac{1-\nu}{c_1c_2\kappa \sqrt{1+
  \bar\gamma^2}},\frac 1{2c_2}\Big\}\norm{((\hat x^{(k)}-\xb,\hat d^{(k)}-\xb),\hat y^{(k)})}\]
  for $((\hat x^{(k)},\hat d^{(k)}),\hat y^{(k)})\in \B_{\delta'}((\xb,\xb),(0,0)$, where $c_1:=2+\frac l{\underline{\gamma}}$, $c_2:=2+(2+\bar\gamma) c_1$ and $l$ is some Lipschitz constant of $f$ in $\B_1(\xb)$. From \eqref{EqAppr1L} we deduce $\norm{((\hat x^{(k)}-\xb,\hat d^{(k)}-\xb),\hat y^{(k)})}\leq c_2\norm{x^{(k)}-\xb}$ yielding
  \begin{align}\label{EqAuxBnd2}
    \norm{\hat x^{(k)}+\Delta x^{(k)}-\xb}\leq \min\Big\{\frac{1-\nu}{c_1\kappa\sqrt{1+
  \bar\gamma^2}},\frac 12\Big\}\norm{\hat x^{(k)}-\xb}
  \end{align}
  for $x^{(k)}\in\B_{\bar \delta}(\xb)$ with $\bar\delta:=\delta'/c_2$.
  We now claim that for every iterate $x^{(k)}\in \B_{\bar\delta}(\xb)$ the Newton step  with step size $\alpha^{(k)}=1$ is accepted.
  If $x^{(k)}\in \B_{\bar\delta}(\xb)$ then $((\hat x^{(k)},\hat d^{(k)}),\hat y^{(k)})\in\B_{\delta'}((\xb,\xb),(0,0))\subset U\times V$ and from \eqref{EqAuxStrMetrSubr} we obtain
  \begin{equation*}\norm{x^{(k)}-\xb}\leq \norm{(\hat x^{(k)}, \hat d^{(k)})-(\xb,\xb)}\leq \kappa \dist{(0,0),\F(\hat x^{(k)}, \hat d^{(k)})}\leq\kappa\norm{\hat y^{(k)}}\leq \kappa\sqrt{1+\bar\gamma^2}\norm{u^{(k)}}.\end{equation*}
  Since $u_{\gamma^{(k)}}(\bar x)=0$, we obtain from \eqref{EqLip2} and \eqref{EqAuxBnd2} that
  \begin{align*}
  \norm{u_{\gamma^{(k)}}(x^{(k)}+\Delta x^{(k)})}&\leq c_1\norm{x^{(k)}+\Delta x^{(k)}-\xb}\leq \frac{1-\nu}{\kappa\sqrt{1+\bar\gamma^2})}\norm{x^{(k)}-\xb}\leq (1-\nu)\norm{u^{(k)}}\\
  &=(1-\nu)\norm{u_{\gamma^{(k)}}(x^{(k)})}
  \end{align*}
  showing
  \[ r_{\gamma^{(k)}}(x^{(k)}+\Delta x^{(k)})=\sqrt{1+{\gamma^{(k)}}^2}\norm{u_{\gamma^{(k)}}(x^{(k)}+\Delta x^{(k)})}\leq (1-\nu) \sqrt{1+{\gamma^{(k)}}^2}\norm{u_{\gamma^{(k)}}(x^{(k)})}=(1-\nu)r_{\gamma^{(k)}}(x^{(k)}).\]
  From this we conclude that the step size $\alpha^{(k)}=1$ is accepted.
  Now let $\bar k$ denote the first index such that $x^{(\bar k)}$ enters the ball $\B_{\bar\delta}$. Then for all $k\geq \bar k$ we have
  \[x^{(k+1)}=x^{(k)}+\Delta x^{(k)}, \quad \norm{x^{(k+1)}-\xb}\leq \frac 12\norm{x^{(k)}-\xb}\]
  and superlinear convergence follows from Theorem \ref{ThConvSSNewtonVI}.
\end{proof}

\section{Numerical Experiments}
Based on the general results from \cite{Fl}, the authors in \cite{OV} considered an evolutionary  Cournot-Nash equilibrium, where in the course of time the players (producers) adjust their productions to respond adequately to changing external parameters. Following \cite{Fl}, however, each change of production is generally associated with some expenses, called {\em costs of change}. In this way one obtains a generalized equation \eqref{EqVI2ndK} which has to be solved repeatedly in each
selected time step.

In this paper  we make the model from \cite{OV} more involved by admitting multiple commodities  and more realistic production constraints. As the solver of the respective generalized equation \eqref{EqVI2ndK}, the \SCD \ssstar Newton method (Algorithm \ref{AlgSSNewtVIBasic}) will be employed. The new model is described  as follows:
Let $n,m$ be the number of players and the number of produced commodities, respectively. Further, let $x=(x^1,\ldots,x^n)\in (\R^m)^n$ be the cumulative vector of productions, where
\[x^i=(x_1^i,x_2^i,\ldots, x_m^i)\in\R^m_+,\quad i=1,2,\ldots,n\]
stands for the {\em production portfolio} of the $i$-th player. With each player we associate
\begin{itemize}
  \item the mapping $c^i:\R^m_+\to\R$ which assigns  $x^i$ the respective  production cost;
  \item the linear system of inequalities $\Xi^ix^i\leq \zeta^i$ with a $p^i\times m$ matrix $\Xi^i$ and a vector $\zeta^i\in \R^{p^i}$ which specifies the set of {\em feasible productions} $\Omega^i=\{x^i\in\R^m\mv \Xi^ix^i\leq \zeta^i\}\subseteq \R^m_+$, and
  \item the cost of change $z^i:\R^m\to\R$ which assigns each change of the production portfolio $\triangle x^i\in\R^m$ the corresponding cost.
\end{itemize}
Clearly, the vector $t=(t_1,t_2,\ldots,t_m)$ with $t_j=\sum_{i=1}^n x_j^i$, $j=1,\ldots,m$, provides the overall amounts of single commodities which are available on the market in the considered time period. The price of the $j$-th commodity is given via the respective {\em inverse demand function} $\pi_j:\R_+\to\R_+$ assigning each value $t_j$ the corresponding price, at which the consumers are willing to buy.

Putting everything together, one arrives at the GE \eqref{EqVI2ndK}, where
\[f(x)=\left(\begin{matrix}
  f^1(x)\\\vdots\\f^n(x)
\end{matrix}\right)\quad \mbox{with}\quad f^i(x)=\nabla c^i(x^i)-\myvec{\pi_1(t_1)\\\vdots\\\pi_m(t_m)}-\myvec{x^i_1\nabla\pi_1(t_1)\\\vdots\\x_m^i\nabla\pi_m(t_m)}\]
and $q(x)=\sum_{i=1}^n\big(z^i(x^i)+\delta_{\Omega^i}(x^i)\big)$, $i=1,2,\ldots,n$. Concerning functions $c^i$, $i=1,\ldots,n$, and $\pi_j$, $j=1,\ldots,m$, we use functions of the same type as in \cite{MSS}, i.e.,
\begin{equation}\label{EqOligo1}c^i(x^i)=\sum_{j=1}^m\Big(b_j^ix^i_j+\frac{\delta_j^i}{\delta_j^i+1}{K_j^i}^{-\frac1{\delta_j^i}}\vert x_j^i \vert^{\frac{\delta_j^i+1}{\delta_j^i}}\Big),\ i=1,\ldots,n\end{equation}
with positive parameters $b_j^i$, $\delta_j^i$ and $K_j^i$, and
\begin{equation}\label{EqOligo2}\pi_j(t_j)=(1000n)^{\frac 1{\gamma_j}}t_j^{-\frac 1{\gamma_j}},\ j=1,\ldots,m\end{equation}
with positive parameters $\gamma_j$.

The functions $z^i$ are modeled in the form
\begin{equation}\label{EqOligo3}
z^i(\triangle x^i)=z^i(x^i- a^i)=\sum_{j=1}^m\beta_j^i\vert x_j^i-a_j^i\vert,\ i=1,\ldots,n,\end{equation}
where $a^i\in\Omega^i$ signifies the ''previous'' production portfolio of the $i$-th player and and the weights $\beta_j^i$ are positive reals indicating the costs of a ''unit'' change of production of the $j$-th commodity by the $i$-th player.

On the basis of \cite{OV} and \cite{MSS} it can be shown that for each fixed choice of the parameters in \eqref{EqOligo1},\eqref{EqOligo2} and \eqref{EqOligo3} the mapping $H(x)=f(x)+\partial q(x)$ is strictly monotone and the respective GE \eqref{EqVI2ndK} has a unique solution $\xb$ such that $H$ is strongly metrically regular around $(\xb,0)$.
From Theorem \ref{ThSubdiffConv} and \cite[Proposition 3.15]{GfrOut21a} it follows that $H$ is an \SCD mapping whenever $f$ is continuously differentiable near $\xb$. Consequently, since $\gph \partial q$ is a polyhedral mapping, we infer from \cite[Propositions 3.5, 3.6, 3.7]{GfrOut21} that in such a situation $H$ is SCD \ssstar and so the conceptual Algorithm \ref{AlgNewton} may be used. However, when implementing Algorithm \ref{AlgSSNewtVIBasic}, one has to be careful because the mapping $f$ does not meet the requirement of continuous differentiability on $\R^n$.
Therefore we replace $\pi_j$ by the twice continuously differentiable functions
\[\hat \pi_j(t_j):=\begin{cases}\pi_j(t_j)&\mbox{if $t_j>\epsilon_1$}\\\pi_j(\epsilon_1)+\pi_j'(\epsilon_1)(t_j-\epsilon_1)+\frac 12\pi_j''(\epsilon_1)(t_j-\epsilon_1)^2&\mbox{if $t_j\leq\epsilon_1$}\end{cases}\]
and, in the definition of $c^i(x^i)$, we replace the term $\vert x_j^i\vert$ by $\sqrt{(x_j^i)^2+\epsilon_2^2}$ whenever $\delta_j^i<1$ (in our implementation we used $\epsilon_1:=10^{-1}, \epsilon_2:=10^{-10}$). Since the functions $c^i$ are convex, one could alternatively  incorporate them in $q$  without smoothing instead of treating them as part of $f$.

Next we describe the approximation step of Algorithm \ref{AlgSSNewtVIBasic}, where $x^{(k)}=\big((x^1)^{(k)},\ldots,(x^n)^{(k)}\big)$ stands for the $k$-th iterate. For a given scaling parameter $\gamma^{(k)}>0$ and $i=1,2,\ldots,n$ we compute consecutively the (unique) solutions $(u^i)^{(k)}$ $,i=1,\ldots,n$ of the strictly convex optimization problems
\begin{equation}\label{EqOligoAppr1}\min_{u^i\in\R^m}\frac {\gamma^{(k)}}2\norm{u^i}^2+\skalp{f^i(x^{(k)}),u^i}+q^i\big((x^i)^{(k)}+u^i\big),\end{equation}
obtaining thus the vector $u^{(k)}=\big((u^1)^{(k)},\ldots,(u^n)^{(k)}\big)\in(\R^m)^n$. Due to the specific structure of the functions $q^i$, problem \eqref{EqOligoAppr1} can be replaced by the standard quadratic program
\begin{align*}
\nonumber  \min_{(u^i,v^i)\in\R^m\times\R^m}&\frac{\gamma^{(k)}}2\norm{u^i}^2+\skalp{f^i(x^{(k)}),u^i}+\sum_{j=1}^m \beta^i_j v^i_j\\
  \mbox{subject to}\quad& \Xi^i\big((x^i)^{(k)}+u^i\big)\leq \zeta^i\\
\nonumber  &\left.\begin{array}{l}v_j^i\geq \quad \, (x^i_j)^{(k)}+u^i_j-a^i_j\\
  v_j^i\geq -\big((x^i_j)^{(k)}+u^i_j-a^i_j\big)\end{array}\right\}j=1,\ldots,m.
\end{align*}
Clearly, the u-component of the solution amounts exactly to the (unique) solution of \eqref{EqOligoAppr1}. The outcome of the projection step is then given by the update \eqref{EqResApprStep1}, i.e.,
\[\hat x^{(k)}= x^{(k)},\ \hat d^{(k)}=x^{(k)}+u^{(k)}=\big((\hat d^1)^{(k)},\ldots,(\hat d^n)^{(k)}\big)\quad \mbox{and}\quad \hat y^{(k)}=-(\gamma^{(k)} u^{(k)},u^{(k)}).\]
In the Newton step we make use of the following theorem.
\begin{theorem}\label{ThSubSpOligo}
  Let $g:\R^m\to\bar\R$ be given by $g(x)=\sum_{j=1}^m\beta_j\vert x_j-a_j\vert+\delta_\Omega(x)$, where $\beta_j\geq0$, $a_j\in\R$, $j=1,\ldots,m$ and $\Omega=\{x\in\R^n\mv \skalp{\xi_l,x}\leq \zeta_l,\ l=1,\ldots,p\}$ is a convex polyhedral set given by the vectors $\xi_l\in\R^m$ and scalars $\zeta_l\in\R$, $l=1,\ldots,p$. Then for every $(x,x^*)\in\gph \partial g$ there holds
  \[W(x)\times W(x)^\perp\in \Sp\partial g(x,x^*)=\Sp^*\partial g(x,x^*),\]
  where $W(x):=\{w\in\R^m \mv w_i=0,\ i\in J_0(x), \skalp{\xi_l,w}=0, l\in L(x)\}$ with $J_0(x):=\{j\mv \beta_j>0,\ x_j=a_j\}$ and $L(x)=\{l\mv\skalp{\xi_l,x}=\zeta_l\}$.
\end{theorem}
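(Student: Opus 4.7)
Since $g$ is the sum of a polyhedral ``weighted $\ell_1$''--function and the indicator of a convex polyhedron, it is proper, lsc and convex, hence Theorem~\ref{ThSubdiffConv} applies and yields $\Sp\partial g(x,x^*)=\Sp^*\partial g(x,x^*)$. It therefore remains to show that $L_0:=W(x)\times W(x)^\perp$ belongs to $\Sp\partial g(x,x^*)$.

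The first ingredient is an explicit formula for $\partial g$. Standard convex subdifferential calculus (the polyhedral constraint qualification justifies the sum rule) gives, for every $y\in\Omega$,
\[\partial g(y)=c(y)+\sum_{j\in J_0(y)}\beta_j[-1,1]e_j+{\rm cone}\{\xi_l\mv l\in L(y)\},\]
where $c(y):=\sum_{j:\beta_j>0,\,y_j\neq a_j}\beta_j\,{\rm sign}(y_j-a_j)\,e_j$. Evaluated at $y=x$, this shows ${\rm aff}\,\partial g(x)=c(x)+W(x)^\perp$, using the identity $W(x)^\perp=\Span\{e_j\mv j\in J_0(x)\}+\Span\{\xi_l\mv l\in L(x)\}$.

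The core step is to produce approximating points where the graph is smooth with the right tangent space. Because $\partial g(x)$ is a non-empty convex polyhedron, $\ri\partial g(x)$ is dense in it, so one can pick $\tilde x^*_k\in\ri\partial g(x)$ with $\tilde x^*_k\to x^*$. I will show $(x,\tilde x^*_k)\in \OO_{\partial g}$ and $\gph D\partial g(x,\tilde x^*_k)=L_0$ for every $k$. For $L_0\subseteq T_{\gph\partial g}(x,\tilde x^*_k)$, take $w\in W(x)$ small: then $w_j=0$ for $j\in J_0(x)$ preserves $(x+w)_j=a_j$, the signs of $x_j-a_j$ for $j\notin J_0(x)$ with $\beta_j>0$ are preserved by continuity, $\langle\xi_l,w\rangle=0$ preserves the active equalities for $l\in L(x)$, and the strict inequalities $l\notin L(x)$ survive. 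Hence $J_0(x+w)=J_0(x)$, $L(x+w)=L(x)$, $c(x+w)=c(x)$, and $\partial g(x+w)=\partial g(x)$. Since $\tilde x^*_k\in\ri\partial g(x)$ with ${\rm aff}\,\partial g(x)-\tilde x^*_k=W(x)^\perp$, for $v\in W(x)^\perp$ small one has $\tilde x^*_k+v\in\partial g(x)=\partial g(x+w)$, so $(x+w,\tilde x^*_k+v)\in\gph\partial g$.

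For the reverse inclusion I use a dimension count. The graph $\gph\partial g$ is a finite union of polyhedra, one for each triple ``kink set $J$, active set $L$, sign pattern $\sigma$'' of the remaining $\beta_j>0$ coordinates; the formula above shows each of them has dimension $\dim W_{J,L}+\dim W_{J,L}^\perp=m$, where $W_{J,L}:=\{w\mv w_j=0, j\in J,\ \langle\xi_l,w\rangle=0, l\in L\}$. Consequently every tangent cone to $\gph\partial g$ has dimension at most $m=\dim L_0$, which forces $T_{\gph\partial g}(x,\tilde x^*_k)=L_0$ and hence $(x,\tilde x^*_k)\in \OO_{\partial g}$ with graphical derivative $L_0$. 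Passing to the $\Limsup$ as $\tilde x^*_k\to x^*$ delivers $L_0\in\Sp\partial g(x,x^*)$, which closes the argument. The main technical obstacle will be rigorously justifying the local invariance $\partial g(x+w)=\partial g(x)$ for $w\in W(x)$, which requires simultaneously controlling the kinks, sign patterns, and active constraint indices; once this is in hand, the $m$-dimensional upper bound on the tangent cone is a routine consequence of the polyhedral stratification of $\gph\partial g$.
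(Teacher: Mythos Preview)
Your overall strategy coincides with the paper's: exploit the explicit polyhedral decomposition of $\gph\partial g$, pick approximating points $(x,\tilde x^*_k)$ with $\tilde x^*_k\in\ri\partial g(x)$, show that $\partial g$ is graphically smooth of dimension $m$ there with tangent space $L_0=W(x)\times W(x)^\perp$, and pass to the limit. Your argument for the inclusion $L_0\subseteq T_{\gph\partial g}(x,\tilde x^*_k)$ via the local invariance $\partial g(x+w)=\partial g(x)$ for small $w\in W(x)$ is correct and, contrary to what you flag, is not the main obstacle.

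The genuine gap is the reverse inclusion. Your dimension count does not work: $\gph\partial g$ being a finite union of $m$-dimensional polyhedra only tells you that the tangent cone at any point is a finite union of polyhedral cones each of dimension $\leq m$. It does \emph{not} bound the dimension of its linear span, nor does ``$L_0\subseteq T$ and $T$ has (topological) dimension $m$'' force $T=L_0$; one could have $T=L_0\cup R$ for some ray $R\not\subseteq L_0$ coming from another piece of the decomposition. What must be shown is that no other piece of the decomposition touches $(x,\tilde x^*_k)$. This is precisely the paper's key claim: for distinct index tuples $(J_0,J_+,J_-,L)\neq(J_0',J_+',J_-',L')$ in $\I$ one has $E_{J_0',J_+',J_-',L'}\cap \ri E_{J_0,J_+,J_-,L}=\emptyset$, established by a multiplier argument (assuming a common point, one derives a contradiction by testing the resulting identity between subgradient representations against a direction in $\ri D_{J_0',J_+',J_-',L'}-x$). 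Once this non-overlap is in hand, $T_{\gph\partial g}(x,\tilde x^*_k)$ equals the tangent cone to the single piece $E_{J_0(x),J_+(x),J_-(x),L(x)}$ at a relative interior point, namely $L_0$.

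If you prefer to avoid the combinatorial non-overlap argument, there is a clean alternative using maximal monotonicity: your invariance step shows that the $m$-dimensional affine patch $\{(x+w,\tilde x^*_k+v)\mv w\in W(x),\,v\in W(x)^\perp\}$ lies in $\gph\partial g$ near $(x,\tilde x^*_k)$. Under the Minty bijection $(a,b)\mapsto a+b$ from $\gph\partial g$ onto $\R^m$, this patch covers a full neighborhood of $x+\tilde x^*_k$; bijectivity then forces the patch to be \emph{all} of $\gph\partial g$ locally, whence $T_{\gph\partial g}(x,\tilde x^*_k)=L_0$.
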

\begin{proof}
  By standard calculus rules of convex analysis, for every $x\in\Omega$ we have
  \begin{align*}\lefteqn{\partial g(x)=N_\Omega(x)+\sum_{j:\beta_j>0}\beta_j\partial \vert x_j-a_j\vert}\\
  &=\{\sum_{l\in L(x)}\xi_l\mu_l+\sum_{j\in J_+(x)}\beta_je_j-\sum_{j\in J_-(x)}\beta_je_j+\sum_{j\in J_0(x)1}^m\beta_j\tau_je_j\mv\mu_l\geq 0,\ l\in L(x), \tau_j\in[-1,1], j\in J_0(x)\},\end{align*}
  where $J_+(x):=\{j\mv \beta_j>0,\ x_j>a_j\}$, $J_-(x):=\{j\mv \beta_j>0,\ x_j<a_j\}$ and $e_j$ denotes the $j$-th unit vector.
  For every partition $J_0,J_+,J_-$ of $\{j\in\{1,\ldots,m\}\mv\beta_j>0\}$ and every index set $L\subseteq\{1,\ldots,p\}$ let
  \begin{align*}&D_{J_0,J_+,J_-,L}:=\Big\{x\,\Big\vert\, \begin{array}{l}
  x_j=a_j,\ j\in J_0,\ x_j\geq a_j,\ j\in J_+,\   x_j\leq a_j,\ j\in J_-\\
  \skalp{\xi_l,x}=\zeta_l,\ l\in L,\ \skalp{\xi_l,x}\leq \zeta_l,\ l\not\in L\end{array}\Big\},\\
  &\tilde D_{J_0,J_+,J_-,L}=\big\{\sum_{j\in J_0}\beta_j\tau_je_j+\sum_{j\in J_+}\beta_je_j-\sum_{j\in J_-}\beta_je_j+\sum_{l\in L}\xi_l\mu_l\mv
  \tau_j\in[-1,+1],\ j\in J_0,\ \mu_l\geq 0,\ l\in L\big\},\\
  &E_{J_0,J_+,J_-,L}:=D_{J_0,J_+,J_-,L}\times \tilde D_{J_0,J_+,J_-,L}.
  \end{align*}
Further we denote by $\I$ the collection of all those index sets $(J_0,J_+,J_-,L)$ such that
\[\ri D_{J_0,J_+,J_-,L}=\Big\{x\mv \begin{array}{l}
  x_j=a_j,\ j\in J_0,\  x_j> a_j,\ j\in J_+,\  x_j<a_j,\ j\in J_-\\
  \skalp{\xi_l,x}=\zeta_l,\ l\in L,\ \skalp{\xi_l,x}< \zeta_l,\ l\not\in L
  \end{array}\Big\}\not=\emptyset.\]
  It follows that for every $(J_0,J_+,J_-, L)\in\I$ and every $x\in D_{J_0,J_+,J_-,L}$ we have $x\in\Omega$ and $\tilde D_{J_0,J_+,J_-,L}\subseteq \partial g(x)$. Further, for every $x\in\Omega$ there holds $(J_0(x),J_+(x),J_-(x),L(x))\in\I$ and $\tilde D_{J_0(x),J_+(x),J_-(x),L(x)}= \partial g(x)$ implying
  \[\gph \partial g=\bigcup_{(J_0,J_+,J_-,L)\in\I}E_{J_0,J_+,J_-,L}.\]
  We now claim that for any two elements $(J_0,J_+,J_-,L)\not=(J_0',J_+',J_-',L')\in\I$ we have $E_{J_0',J_+',J_-',L'}\cap \ri E_{J_0,J_+,J_-,L}=\emptyset$.
  Note that $\ri E_{J_0,J_+,J_-,L}=\ri D_{J_0,J_+,J_-,L}\times \ri \tilde D_{J_0,J_+,J_-,L}$ and that
  \[\ri \tilde D_{J_0,J_+,J_-,L}=\big\{\sum_{j\in J_0}\beta_j\tau_je_j+\sum_{j\in J_+}\beta_je_j-\sum_{j\in J_-}\beta_je_j+\sum_{l\in L}\xi_l\mu_l\mv
  \tau_j\in(-1,+1),\ j\in J_0,\ \mu_l> 0,\ l\in L\big\}\]
  by \cite[Theorem 6.6.]{Ro70}.
  Assuming that this claim does not hold for some $(J_0,J_+,J_-,L)\not=(J_0',J_+',J_-',L')\in\I$, there are reals $\mu_l>0$, $l\in L$, $\mu_l'\geq 0$, $l\in L'$, $\tau_j\in(-1,1)$, $j\in J_0$, $\tau_j'\in [-1,1]$, $j\in J_0'$ such that
  \begin{equation}\label{EqAuxSumMu}\sum_{j\in J_0}\beta_j\tau_je_j+\sum_{j\in J_+}\beta_je_j-\sum_{j\in J_-}\beta_je_j+\sum_{l\in L}\xi_l\mu_l=\sum_{j\in J_0'}\beta_j\tau_j'e_j+\sum_{j\in J_+'}\beta_je_j-\sum_{j\in J_-'}\beta_je_j+\sum_{l\in L'}\xi_l\mu_l'\end{equation}
  and some $x\in D_{J_0',J_+',J_-',L'}\cap \ri D_{J_0,J_+,J_-,L}$ implying $J_0'\subseteq J_0$ and $L'\subseteq L$, where equality can not simultaneously hold in both inclusions.
  Choosing $x'\in\ri D_{J_0',J_+',J_-',L'}$ and setting $u=x'-x$, we obtain
  \[u_j=0,\ j\in J_0',\ u_j>0,\ j\in (J_0\setminus J_0')\cap J_+',\ u_j<0,\ j\in (J_0\setminus J_0')\cap J_-',\ \skalp{\xi_l,u}=0,\ l\in L', \skalp{\xi_l,u}<0,\ l\in L\setminus L'.\]
  Rearranging \eqref{EqAuxSumMu} yields
  \[\sum_{j\in (J_0\setminus J_0')\cap J_+'}\beta_j(\tau_j-1)e_j+\sum_{j\in (J_0\setminus J_0')\cap J_-'}\beta_j(\xi_j+1)e_j+\sum_{l\in L\setminus L'}\mu_l\xi_l=\sum_{j\in J_0'}\beta_j(\tau_j'-\tau_j)e_j+\sum_{l\in L'}(\mu_l'-\mu_l)\xi_l\]
  and by multiplying this equation with $u$ we obtain the contradiction
  \begin{align*}0&>\sum_{j\in (J_0\setminus J_0')\cap J_+'}\beta_j(\tau_j-1)u_j+\sum_{j\in (J_0\setminus J_0')\cap J_-'}\beta_j(\tau_j+1)u_j+\sum_{l\in L\setminus L'}\mu_l\skalp{\xi_l,u}\\
  &=\sum_{j\in J_0'}\beta_j(\tau_j'-\tau_j)u_j+\sum_{l\in L'}(\mu_l'-\mu_l)\skalp{\xi_l,u}=0.\end{align*}
  Hence, our claim holds true and we may conclude that for every $(J_0,J_+,J_-,L)\in\I$ and every $(z,z^*)\in \ri  E_{J_0,J_+,J_-,L}$ we have
  \begin{align*}\lefteqn{T_{\gph \partial g}(z,z^*)=T_{E_{J_0,J_+,J_-,L}}(z,z^*)=T_{D_{J_0,J_+,J_-,L}}(z)\times T_{\tilde D_{J_0,J_+,J_-,L}}(z^*)}\\
  &=\{w\mv w_j=0, j\in J_0, \skalp{\xi_l,w}=0, l\in L\}\times
  \{\sum_{j\in J_0}\beta_j\sigma_je_j+\sum_{l\in L}\xi_l\nu_l\mv \sigma_j\in\R,\ j\in J_0,\ \nu_l\in\R,\ l\in L\}\\
  &=  W(z)\times W(z)^\perp,\end{align*}
  where the last equality follows from $J_0=J_0(z)$ and $L=L(z)$.
  Now consider $(x,x^*)\in\gph \partial g$. Then $(J_0(x),J_+(x),J_-(x),L(x))\in \I$ and $x\in \ri D_{I_0(x),I_+(x),I_-(x),J(x)}$. Selecting $z^*\in \ri \tilde D_{J_0(x),J_+(x),J_-(x),L(x)}$, for all $\alpha\in(0,1]$ we have $x_\alpha^*:= (1-\alpha)x^*+\alpha z^*)\in \ri \tilde D_{J_0(x),J_+(x),J_-(x),L(x)}$ implying $T_{\gph \partial g}(x,x_\alpha^*)= W(x)\times W(x)^\perp$. Now the assertion follows from the definition of $\Sp\partial g(x,x^*)$ together with Theorem \ref{ThSubdiffConv}.
\end{proof}
\if{
\begin{theorem}\label{ThSubSpOligo}
  Let $g:\R^m\to\bar\R$ be given by $g(x)=\sum_{j=1}^m\beta_j\vert x_j-a_j\vert+\delta_\Omega(x)$, where $\beta_j>0$, $a_j\in\R$, $j=1,\ldots,m$ and $\Omega=\{x\in\R^n\mv \skalp{\xi_l,x}\leq \zeta_l,\ l=1,\ldots,p\}$ is a convex polyhedral set given by the vectors $\xi_l\in\R^m$ and scalars $\zeta_l\in\R$, $l=1,\ldots,p$. Then for every $(x,x^*)\in\gph \partial g$ there holds
  \[W(x)\times W(x)^\perp\in \Sp\partial g(x,x^*)=\Sp^*\partial g(x,x^*),\]
  where $W(x):=\{w\in\R^m \mv w_i=0,\ i\in J_0(x), \skalp{\xi_l,w}=0, l\in L(x)\}$ with $J_0(x):=\{j\mv x_j=a_j\}$ and $L(x)=\{l\mv\skalp{\xi_l,x}=\zeta_l\}$.
\end{theorem}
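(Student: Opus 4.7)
The plan is to exploit the piecewise-polyhedral structure of $\gph \partial g$. First I would apply standard convex subdifferential calculus to write
\[
\partial g(x) = N_\Omega(x) + \sum_{j : \beta_j > 0} \beta_j\, \partial |\cdot - a_j|(x_j),
\]
and then stratify $\gph \partial g$ into finitely many convex polyhedral ``faces'' indexed by tuples $(J_0, J_+, J_-, L)$ that record which kinks of $|x_j - a_j|$ are active, which variables lie strictly above/below $a_j$, and which linear inequalities defining $\Omega$ are active. This gives the decomposition
\[
\gph \partial g = \bigcup_{(J_0,J_+,J_-,L) \in \I} E_{J_0,J_+,J_-,L}, \qquad E_{J_0,J_+,J_-,L} = D_{J_0,J_+,J_-,L} \times \tilde D_{J_0,J_+,J_-,L},
\]
where $\I$ collects the combinatorially admissible tuples whose primal piece $D$ has nonempty relative interior.

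The main obstacle is showing that the relative interiors of maximal pieces are pairwise disjoint from the other pieces, i.e.\ $E_{J_0',J_+',J_-',L'} \cap \ri E_{J_0,J_+,J_-,L} = \emptyset$ whenever the index tuples differ. I would argue by contradiction: equating two representations of the same subgradient yields an identity
\[
\sum_{j \in J_0} \beta_j \tau_j e_j + \sum_{j \in J_+} \beta_j e_j - \sum_{j \in J_-} \beta_j e_j + \sum_{l \in L} \mu_l \xi_l = \sum_{j \in J_0'} \beta_j \tau_j' e_j + \sum_{j \in J_+'} \beta_j e_j - \sum_{j \in J_-'} \beta_j e_j + \sum_{l \in L'} \mu_l' \xi_l
\]
with $\tau_j \in (-1,1)$ for $j \in J_0$ and $\mu_l > 0$ for $l \in L$. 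Picking an auxiliary point $x' \in \ri D_{J_0',J_+',J_-',L'}$ and pairing both sides with $u = x' - x$, the left-hand side is \emph{strictly} negative (because $\tau_j \pm 1$ has the unfavorable sign while $u_j$ has the favorable one, and $\langle \xi_l, u\rangle < 0$ for $l \in L\setminus L'$ combined with $\mu_l > 0$), whereas the right-hand side vanishes (since $u_j = 0$ on $J_0'$ and $\langle \xi_l, u\rangle = 0$ on $L'$), a contradiction. This bookkeeping of signs is the delicate part.

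Once the pieces have disjoint relative interiors, at any $(z,z^*) \in \ri E_{J_0,J_+,J_-,L}$ one has $T_{\gph \partial g}(z,z^*) = T_{E_{J_0,J_+,J_-,L}}(z,z^*)$, which factors as $T_{D_{J_0,J_+,J_-,L}}(z) \times T_{\tilde D_{J_0,J_+,J_-,L}}(z^*)$. A direct inspection of the defining constraints identifies the first factor with $W(z)$ and the second with $\{ \sum_{j \in J_0} \beta_j \sigma_j e_j + \sum_{l \in L} \nu_l \xi_l \mid \sigma_j, \nu_l \in \R \}$, which is exactly $W(z)^\perp$. In particular $\partial g$ is graphically smooth of dimension $m$ at such $(z,z^*)$.

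Finally, for a general $(x, x^*) \in \gph \partial g$, I would note $(J_0(x), J_+(x), J_-(x), L(x)) \in \I$ and pick $z^* \in \ri \tilde D_{J_0(x), J_+(x), J_-(x), L(x)}$. The convex combinations $x_\alpha^* := (1-\alpha) x^* + \alpha z^*$ stay in the same relative interior for every $\alpha \in (0,1]$, so $\gph D(\partial g)(x, x_\alpha^*) = W(x) \times W(x)^\perp$. Letting $\alpha \downarrow 0$ inside the outer limit defining $\Sp \partial g$ yields $W(x) \times W(x)^\perp \in \Sp \partial g(x, x^*)$, and Theorem \ref{ThSubdiffConv} gives $\Sp \partial g(x,x^*) = \Sp^* \partial g(x,x^*)$, completing the argument.
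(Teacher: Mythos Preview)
Your proposal is correct and follows essentially the same route as the paper's proof: the polyhedral decomposition $\gph\partial g=\bigcup_{(J_0,J_+,J_-,L)\in\I}E_{J_0,J_+,J_-,L}$, the disjointness claim $E_{J_0',J_+',J_-',L'}\cap\ri E_{J_0,J_+,J_-,L}=\emptyset$ proved by contradiction via pairing the subgradient identity with $u=x'-x$, the identification $T_{\gph\partial g}(z,z^*)=W(z)\times W(z)^\perp$ on relative interiors, and the final approximation $x_\alpha^*=(1-\alpha)x^*+\alpha z^*$ combined with Theorem~\ref{ThSubdiffConv}. The only point to tighten is that in the contradiction step the paper first deduces $J_0'\subseteq J_0$ and $L'\subseteq L$ from $x\in D_{J_0',J_+',J_-',L'}\cap\ri D_{J_0,J_+,J_-,L}$ and then \emph{rearranges} the identity before pairing with $u$; it is the rearranged equation, not the original one, whose two sides give the strict-versus-zero contradiction.
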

\begin{proof}
  By standard calculus rules of convex analysis, for every $x\in\Omega$ we have
  \begin{align*}\lefteqn{\partial g(x)=N_\Omega(x)+\sum_{j=1}^m\beta_j\partial \vert x_j-a_j\vert}\\
  &=\{\sum_{l\in L(x)}\xi_l\mu_l+\sum_{j\in J_+(x)}\beta_je_j-\sum_{j\in J_-(x)}\beta_je_j+\sum_{j\in J_0(x)1}^m\beta_j\tau_je_j\mv\mu_l\geq 0,\ l\in L(x), \tau_j\in[-1,1], j\in J_0(x)\},\end{align*}
  where $J_+(x):=\{j\mv x_j>a_j\}$, $J_-(x):=\{j\mv x_j<a_j\}$ and $e_j$ denotes the $j$-th unit vector.
  For every partition $J_0,J_+,J_-$ of $\{1,\ldots,m\}$ and every index set $L\subseteq\{1,\ldots,p\}$ let
  \begin{align*}&D_{J_0,J_+,J_-,L}:=\Big\{x\,\Big\vert\, \begin{array}{l}
  x_j=a_j,\ j\in J_0,\ x_j\geq a_j,\ j\in J_+,\   x_j\leq a_j,\ j\in J_-\\
  \skalp{\xi_l,x}=\zeta_l,\ l\in L,\ \skalp{\xi_l,x}\leq \zeta_l,\ l\not\in L\end{array}\Big\},\\
  &\tilde D_{J_0,J_+,J_-,L}=\big\{\sum_{j\in J_0}\beta_j\tau_je_j+\sum_{j\in J_+}\beta_je_j-\sum_{j\in J_-}\beta_je_j+\sum_{l\in L}\xi_l\mu_l\mv
  \tau_j\in[-1,+1],\ j\in J_0,\ \mu_l\geq 0,\ l\in L\big\},\\
  &E_{J_0,J_+,J_-,L}:=D_{J_0,J_+,J_-,L}\times \tilde D_{J_0,J_+,J_-,L}.
  \end{align*}
Further we denote by $\I$ the collection of all those index sets $(J_0,J_+,J_-,L)$ such that
\[\ri D_{J_0,J_+,J_-,L}=\Big\{x\mv \begin{array}{l}
  x_j=a_j,\ j\in J_0,\  x_j> a_j,\ j\in J_+,\  x_j<a_j,\ j\in J_-\\
  \skalp{\xi_l,x}=\zeta_l,\ l\in L,\ \skalp{\xi_l,x}< \zeta_l,\ l\not\in L
  \end{array}\Big\}\not=\emptyset.\]
  It follows that for every $(J_0,J_+,J_-, L)\in\I$ and every $x\in D_{J_0,J_+,J_-,L}$ we have $x\in\Omega$ and $\tilde D_{J_0,J_+,J_-,L}\subseteq \partial g(x)$. Further, for every $x\in\Omega$ there holds $(J_0(x),J_+(x),J_-(x),L(x))\in\I$ and $\tilde D_{J_0(x),J_+(x),J_-(x),L(x)}= \partial g(x)$ implying
  \[\gph \partial g=\bigcup_{(J_0,J_+,J_-,L)\in\I}E_{J_0,J_+,J_-,L}.\]
  We now claim that for any two elements $(J_0,J_+,J_-,L)\not=(J_0',J_+',J_-',L')\in\I$ we have $E_{J_0',J_+',J_-',L'}\cap \ri E_{J_0,J_+,J_-,L}=\emptyset$.
  Note that $\ri E_{J_0,J_+,J_-,L}=\ri D_{J_0,J_+,J_-,L}\times \ri \tilde D_{J_0,J_+,J_-,L}$ and that
  \[\ri \tilde D_{J_0,J_+,J_-,L}=\big\{\sum_{j\in J_0}\beta_j\tau_je_j+\sum_{j\in J_+}\beta_je_j-\sum_{j\in J_-}\beta_je_j+\sum_{l\in L}\xi_l\mu_l\mv
  \tau_j\in(-1,+1),\ j\in J_0,\ \mu_l> 0,\ l\in L\big\}\]
  by \cite[Theorem 6.6.]{Ro70}.
  Assuming that this claim does not hold for some $(J_0,J_+,J_-,L)\not=(J_0',J_+',J_-',L')\in\I$, there are reals $\mu_l>0$, $l\in L$, $\mu_l'\geq 0$, $l\in L'$, $\tau_j\in(-1,1)$, $j\in J_0$, $\tau_j'\in [-1,1]$, $j\in J_0'$ such that
  \begin{equation}\label{EqAuxSumMu}\sum_{j\in J_0}\beta_j\tau_je_j+\sum_{j\in J_+}\beta_je_j-\sum_{j\in J_-}\beta_je_j+\sum_{l\in L}\xi_l\mu_l=\sum_{j\in J_0'}\beta_j\tau_j'e_j+\sum_{j\in J_+'}\beta_je_j-\sum_{j\in J_-'}\beta_je_j+\sum_{l\in L'}\xi_l\mu_l'\end{equation}
  and some $x\in D_{J_0',J_+',J_-',L'}\cap \ri D_{J_0,J_+,J_-,L}$ implying $J_0'\subseteq J_0$ and $L'\subseteq L$, where equality can not simultaneously hold in both inclusions.
  Choosing $x'\in\ri D_{J_0',J_+',J_-',L'}$ and setting $u=x'-x$, we obtain
  \[u_j=0,\ j\in J_0',\ u_j>0,\ j\in (J_0\setminus J_0')\cap J_+',\ u_j<0,\ j\in (J_0\setminus J_0')\cap J_-',\ \skalp{\xi_l,u}=0,\ l\in L', \skalp{\xi_l,u}<0,\ l\in L\setminus L'.\]
  Rearranging \eqref{EqAuxSumMu} yields
  \[\sum_{j\in (J_0\setminus J_0')\cap J_+'}\beta_j(\tau_j-1)e_j+\sum_{j\in (J_0\setminus J_0')\cap J_-'}\beta_j(\xi_j+1)e_j+\sum_{l\in L\setminus L'}\mu_l\xi_l=\sum_{j\in J_0'}\beta_j(\tau_j'-\tau_j)e_j+\sum_{l\in L'}(\mu_l'-\mu_l)\xi_l\]
  and by multiplying this equation by $u$ we obtain the contradiction
  \begin{align*}0&>\sum_{j\in (J_0\setminus J_0')\cap J_+'}\beta_j(\tau_j-1)u_j+\sum_{j\in (J_0\setminus J_0')\cap J_-'}\beta_j(\tau_j+1)u_j+\sum_{l\in L\setminus L'}\mu_l\skalp{\xi_l,u}\\
  &=\sum_{j\in J_0'}\beta_j(\tau_j'-\tau_j)u_j+\sum_{l\in L'}(\mu_l'-\mu_l)\skalp{\xi_l,u}=0.\end{align*}
  Hence, our claim holds true and we may conclude that for every $(J_0,J_+,J_-,L)\in\I$ and every $(z,z^*)\in \ri  E_{J_0,J_+,J_-,L}$ we have
  \begin{align*}\lefteqn{T_{\gph \partial g}(z,z^*)=T_{E_{J_0,J_+,J_-,L}}(z,z^*)=T_{D_{J_0,J_+,J_-,L}}(z)\times T_{\tilde D_{J_0,J_+,J_-,L}}(z^*)}\\
  &=\{w\mv w_j=0, j\in J_0, \skalp{\xi_l,w}=0, l\in L\}\times
  \{\sum_{j\in J_0}\beta_j\sigma_je_j+\sum_{l\in L}\xi_l\nu_l\mv \sigma_j\in\R,\ j\in J_0,\ \nu_l\in\R,\ l\in L\}\\
  &=  W(z)\times W(z)^\perp,\end{align*}
  where the last equality follows from $J_0=J_0(z)$ and $L=L(z)$.
  Now consider $(x,x^*)\in\gph \partial g$. Then $(J_0(x),J_+(x),J_-(x),L(x))\in \I$ and $x\in \ri D_{I_0(x),I_+(x),I_-(x),J(x)}$. Selecting $z^*\in \ri \tilde D_{J_0(x),J_+(x),J_-(x),L(x)}$, for all $\alpha\in(0,1]$ we have $x_\alpha^*:= (1-\alpha)x^*+\alpha z^*)\in \ri \tilde D_{J_0(x),J_+(x),J_-(x),L(x)}$ implying $T_{\gph \partial g}(x,x_\alpha^*)= W(x)\times W(x)^\perp$. Now the assertion follows from the definition of $\Sp\partial g(x,x^*)$ together with Theorem \ref{ThSubdiffConv}.
\end{proof}
}\fi
Let ${\hat d^*}{}^{(k)}:=-\gamma^{(k)}u^{(k)}-f(x^{(k)})$. By Lemma \ref{LemSepStruct} and consecutive application of Theorem \ref{ThSubSpOligo} with $g=q^i$ we obtain
\[\prod_{i=1}^n (W^i)^{(k)}\times \prod_{i=1}^n {(W^i)^{(k)}}^\perp\in\Sp^*\partial q(\hat d^{(k)},{\hat d^*}{}^{(k)}),\]
where for each $i=1,\ldots,n$ the subspace $(W^i)^{(k)}\subset\R^m$  is given by
\[(W^i)^{(k)}:=\{w\mv \skalp{\xi^i_l,w}=0, l\in (L^i)^{(k)},\ w_j=0,\ j\in (J^i_0)^{(k)}\}\]
with $(J^i_0)^{(k)}:=\{j\in \{1,\ldots,m\}\mv (\hat d^i_j)^{(k)}=a^i_j\}$, $(L^i)^{(k)}:=\{l\in \{1,\ldots, p^i\}\mv \skalp{\xi^i_l, (\hat d^i)^{(k)}}=\zeta^i_l\}$,
and the vectors $\xi^i_l$, $l=1,\ldots, p^i$, given by the $l$-th row of the matrix $\Xi^i$.

The required matrices $Y^{(k)}={\rm diag\,}\big((Y^1)^{(k)},\ldots, (Y^n)^{(k)}\big)$ and $X^{(k)}={\rm diag\,}\big((X^1)^{(k)},\ldots, (X^n)^{(k)}\big)$ are block diagonal matrices, where the diagonal $m\times m$ blocks can be computed as
\[(Y^i)^{(k)}=Q_2^i\times {Q_2^i}^T,\quad (X^i)^{(k)}=Q_1^i\times {Q_ 1^i}^T\]
and the columns of $Q_2^i$ and $Q_1^i$ are orthonormal bases for the subspaces $(W^i)^{(k)}$ and ${(W^i)^{(k)}}^\perp$, respectively. The matrices $Q_1^i$ and $Q_2^i$ can be computed, e.g., via a QR-factorization with column pivoting for the matrix with columns $\xi^i_l/\norm{\xi^i_l}$, $l\in (L^i)^{(k)}$, and $e_j$, $j\in (J^i_0)^{(k)}$, see, e.g., \cite[Section 2.2.5.3]{GiMuWr81}.

Concerning the numerical tests\footnote{All codes can be found on \url{https://www.numa.uni-linz.ac.at/~gfrerer/Software/Cournot_Nash/}}, we consider first an academic example with $n=5$ and $m=3$.
The parameters $ b_j^i, \delta_j^i, K_j^i$ of production cost functions together with the market elasticities $\gamma_j$ arising in the inverse demand functions are displayed in Table 1. In the constraints  $\Xi^ix^i\leq \zeta^i$, defining the sets of feasible productions, we
assume that matrices $\Xi^i$ have only one row (i.e., $p^i=1 $). The respective data are listed in Table 2 together with the weights $\beta_j^i$ specifying the costs of change and the "previous" productions $a^i_j$. Finally, Table 3 presents the starting values of $x^i_j$ (initial iteration) and the obtained results, including both the equilibrium productions as well as the corresponding costs of change.

\newcolumntype{d}{>{\hsize=1\hsize}X}
\newcolumntype{s}{>{\hsize=.5\hsize}X}
\newcolumntype{L}{>{\raggedright\arraybackslash}X}
\newcolumntype{R}{>{\raggedleft\arraybackslash}X}
\newcolumntype{C}{>{\centering\arraybackslash}X}
\newcolumntype{Z}{>{\hsize=.875\hsize\RaggedLeft}X}

\begin{table}[H]
\normalsize
\begin{tabularx}{\textwidth}{R | R R R | R R R | R R R | R R R | R R R | R }
  & \multicolumn{3}{c|}{$i=1$}
  & \multicolumn{3}{c|}{$i=2$}
  & \multicolumn{3}{c|}{$i=3$}
  & \multicolumn{3}{c|}{$i=4$}
  & \multicolumn{3}{c|}{$i=5$}
  &\\
\hline
 & $b_j^i$ & $\delta_j^i$ & $K_j^i$ & $b_j^i$ & $\delta_j^i$ & $K_j^i$ & $b_j^i$ & $\delta_j^i$ & $K_j^i$ & $b_j^i$ & $\delta_j^i$ & $K_j^i$ & $b_j^i$ & $\delta_j^i$ & $K_j^i$ & $\gamma_j$ \\
\hline
j=1&9.0 &1.2 &5.0 &7.0 &1.1 &5.0 &3.0 &1.0 &5.0 &4.0 &0.9 &5.0 &2.0 &0.8 &5.0 &1.0\\
j=2 &9.0 &1.2 &5.0 &7.0 &1.1 &5.0 &3.0 &1.0 &5.0 &4.0 &0.9 &5.0 &2.0 &0.8 &5.0 &0.9\\
j=3 &9.0 &1.2 &5.0 &7.0 &1.1 &5.0 &3.0 &1.0 &5.0 &4.0 &0.9 &5.0 &2.0 &0.8 &5.0 &0.8\\
\hline
\end{tabularx}
\caption{Input parameters $b_j^i, \delta_j^i, K_j^i$ of production costs and market elasticities $\gamma_j$.}  
\end{table}
\begin{table}[H]
\begin{tabularx}{\textwidth}{R | R R R | R R R | R R R | R R R | R R R  }
  & \multicolumn{3}{c|}{$i=1$}
  & \multicolumn{3}{c|}{$i=2$}
  & \multicolumn{3}{c|}{$i=3$}
  & \multicolumn{3}{c|}{$i=4$}
  & \multicolumn{3}{c}{$i=5$}\\
\hline
 & $\Xi_j^i$ & $\beta_j^i$ & $a_j^i$ & $\Xi_j^i$ & $\beta_j^i$ & $a_j^i$ & $\Xi_j^i$ & $\beta_j^i$ & $a_j^i$ & $\Xi_j^i$ & $\beta_j^i$ & $a_j^i$ & $\Xi_j^i$ & $\beta_j^i$ & $a_j^i$ \\
\hline
j=1&1.0 &0.5 &47.8 &1.0 &1.0 &51.1 &1.0 &2.0 &51.3 &1.0 &0.0 &48.5 &1.0 &0.0 &43.5\\
j=2 &1.0 &0.5 &47.8 &1.0 &1.0 &51.1 &1.0 &2.0 &51.3 &1.0 &0.0 &48.5 &1.0 &0.0 &43.5\\
j=3 &1.0 &20.0 &47.8 &1.0 &1.0 &51.1 &1.0 &2.0 &51.3 &1.0 &0.0 &48.5 &1.0 &0.0 &43.5\\
\hline
$\zeta^i$ & 200 & & &  250 & & &   100 & & &  200 & & &  200
\end{tabularx}
\caption{Input parameters $\Xi_j^i, \zeta^i$ defining feasible productions, parameters $\beta_j^i$ of costs of change and previous
productions $a_j^i$.}  
\end{table}
\begin{table}[H]
\begin{tabularx}{\textwidth}{R | R R Z | R R R | R R R | R R R | R R R  }
  & \multicolumn{3}{c|}{$i=1$}
  & \multicolumn{3}{c|}{$i=2$}
  & \multicolumn{3}{c|}{$i=3$}
  & \multicolumn{3}{c|}{$i=4$}
  & \multicolumn{3}{c}{$i=5$}\\
\hline
 & $(x^i_j)^{(0)}$ & $x^i_j$ & $z^i_j$& $(x^i_j)^{(0)}$ & $x^i_j$ & $z^i_j$ & $(x^i_j)^{(0)}$ & $x^i_j$ & $z^i_j$ & $(x^i_j)^{(0)}$ & $x^i_j$ & $z^i_j$ & $(x^i_j)^{(0)}$ & $x^i_j$ & $z^i_j$ \\
\hline
j=1&45.0 &54.4 &3.3 &45.0 &54.6 &3.5 &45.0 &20.6 &61.4 &45.0 &50.8 &0.0 &45.0 &45.3 &0.0\\
j=2 &45.0 &67.9 &10.0 &45.0 &66.2 &15.0 &45.0 &30.6 &41.5 &45.0 &58.2 &0.0 &45.0 &50.6 &0.0\\
j=3 &45.0 &47.8 &0.0 &45.0 &85.0 &33.8 &45.0 &48.8 &5.0 &45.0 &70.7 &0.0 &45.0 &60.0 &0.0
\end{tabularx}
\caption{Initial productions $(x^i_j)^{(0)}$, the computed equilibrium productions $x^i_j$ and the corresponding costs of change denoted by $z^i_j$.}  
\end{table}

\if{
\begin{figure}[H]
\centering
\includegraphics[width=0.65\textwidth]{convergence.pdf}
\vspace{-0.5cm}
\caption{Convergence of residuum $|| \hat u^{(k)}||$ .}\label{example_2D}
\end{figure}
}\fi

The results displayed in Table 3 have been achieved in 6 iterations of Algorithm \ref{AlgSSNewtVIBasic} and the final residual amounts to $2.7\times 10^{-12}$.
\if{and the corresponding value of $|| \hat u^{(k)}||$ amounts to  1e-14, see Figure
\ref{example_2D}.}\fi
Note that the third firm exhausts its maximum production capacity whereas the other firms do not. We also observe the prohibitive influence of the high value of $\beta_3^1$, thanks to which, expectantly,
$x_3^1 = a_3^1 = 47.8$.

Next, to demonstrate the computational efficiency of the SCD
\ssstar Newton  method, we increase substantially the values of $n$ and $m$. In dependence of $n$ and $m$, we generated test problems by drawing the data  independently from the uniform distributions with the following parameters:
\[\left.\begin{array}{l}
  b_j^i\sim\U(2,20),\ \delta_j^i\sim\U(0.5,2),\ K_j^i\sim\U(0.1,10)\\
  \Xi^i_{lj}\sim\U(0,1),\ \beta_j^i\sim\U(1,10),\ a_j^i\sim\U(20,50),\\
  \gamma_j\sim\U(1,2)
\end{array}\right\},\quad i=1,\ldots,n, j=1,\ldots,m, l=1,\ldots,p^i\]
Here the numbers $p^i$, $i=1,\ldots,n$ are obtained by rounding  numbers drawn independently from $\U(1,1.5m+1)$. Further we set $\zeta^i:=\Xi^iz^i$, where for each $i=1,\ldots,n$ the elements $z_j^i$, $j=1,\ldots,m$ are drawn from $\U(1,15)$.
For each pair $(n,m)$ belonging to the set $\{(5,200),(25,40), (200,5)\}$ we generated 50 test problems and solved them as well with the heuristic from Subsection \ref{SecHeuristic} as with the globalized \ssstar Newton method  of Algorithm \ref{AlgSSNewt_Hybrid} with $\T=\T_{\gamma}^{\rm PM}$. As a stopping criterion we used $r_{\gamma^{(k)}}(x^{(k)})\leq 10^{-12}r_{\gamma^{(0)}}(x^{(0)})$ and as the starting point we chose the vector $(5,5,\ldots,5)$. Both methods succeeded in all of the $150$ test problems.
 In  Table \ref{TabMeanIter} we report for each scenario the mean value of the iterations needed, the standard deviation and the maximum iteration number.
 \begin{table}[H]
 \begin{tabular}{|c||c|c|c||c|c|c|}
 \hline
 &\multicolumn{3}{c||}{Hybrid method}&\multicolumn{3}{c|}{Heuristic}\\
 \hline
 $(n,m)$&mean value&std. dev.&max. iteration \#&mean value&std. dev.&max. iteration \#\\
 \hline
 (5,200)&20.2&9.6&46&20.4&4.7&39\\
 (25,40)&28.9&10.3&52&28.2&7.6&50\\
 (200,5)&32.4&13.8&76&27.9&9.3&75\\
 \hline
 \end{tabular}
 \caption{\label{TabMeanIter}Statistics of iteration numbers for 50 test problems per scenario}
 \end{table}
For each of the 3 scenarios we have a problem with $nm=1000$ unknowns. The time consuming parts of the \ssstar Newton method are approximation step and the Newton step: In the approximation step we have to solve $n$ quadratic problems with $2m$ variables, whereas in the Newton step we must solve a linear system in $nm$ variables. Thus, in case when $(n,m)=(5,200)$ the approximation step is more time consuming than the Newton step, whereas in case when $(n,m)=(200,5)$ the approximation step is much cheaper than the Newton step. We can see that the iteration numbers needed are fairly small.
Note that the given iteration numbers essentially reflect the {\em global} convergence behaviour: The majority of the iterations is needed to come sufficiently close to the solution and then, by superlinear convergence of the \ssstar Newton method, only 3--6 iterations more are required to approximate the solution with the desired accuracy. In Figure \ref{FigGlobConv1} we depict the residuals  $r_{\gamma^{(k)}}(x^{(k)})$ given by \eqref{EqRes1} for one test problem with $(n,m)=(5,200)$ for both Algorithm \ref{AlgSSNewt_Hybrid} and the heuristic of Subsection \ref{SecHeuristic}.  Algorithm \ref{AlgSSNewt_Hybrid} needed 16 iterations to reduce the initial residual of $840.7$ to $6.0$  and the method stopped after 6 additional iterations with a residual of $4\times10^{-12}$. Similarly, for the heuristic we obtained at the 15-th iterate a residual of $8.5$ and the method stopped after 21 iterations with a final residual of $5.7\times10^{-12}$.

\begin{figure}\centering
  \includegraphics[height=5cm]{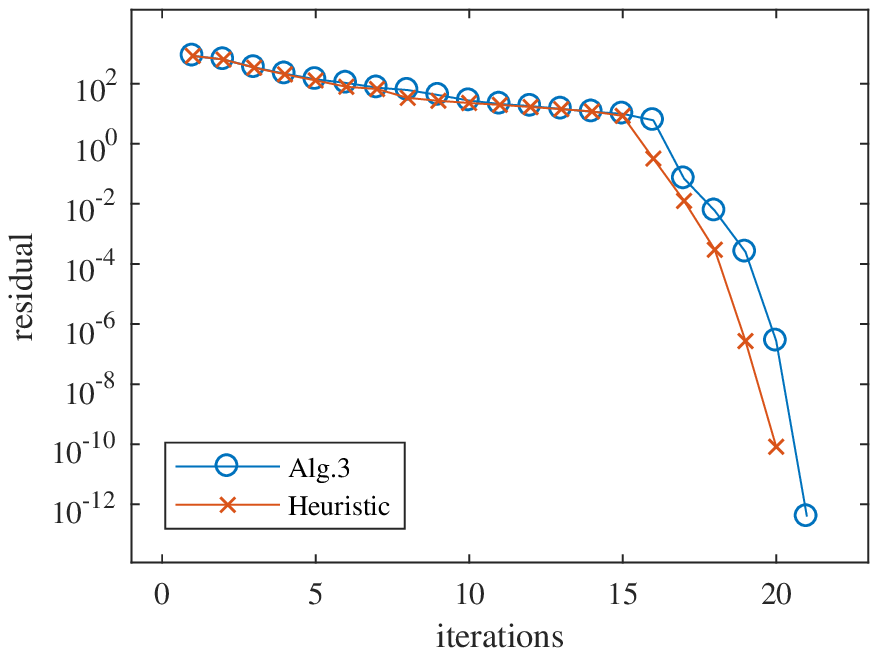}
  \caption{\label{FigGlobConv1}Comparison of Algorithm \ref{AlgSSNewt_Hybrid} with heuristic}
\end{figure}

We now compare the \ssstar Newton method with several first-order splitting method, namely the  Forward-Backward splitting method FB, the golden ratio algorithm aGRAAL \cite{Mal20}, the Douglas-Rachford splitting algorithm DR and the hybrid projection-proximal point algorithm PM \cite{SolSv99}. We performed this comparison only for the scenario with $(n,m)=(200,5)$, where one evaluation of the proximal mapping is relatively cheap, i.e., we have to solve 200 quadratic programs with 10 variables.
We generated 3 test problems  and computed with the \ssstar Newton method a fairly accurate  approximation $\tilde x$ of the exact solution: For each of the 3 test problems the final residual was less than $2.4\times 10^{-12}$. Using this approximate solution $\tilde x$, we computed for the aforementioned  methods the relative error of the iterates $x^{(k)}$ defined as  $\max\{\frac{\vert x_i^{(k)}-\tilde x_i\vert}{\max\{1,\vert \tilde x_i\vert\}}\mv i=1,\ldots,nm\}$. In Figure \ref{FigComparison} we plot  this relative error against the CPU-time needed for calculating $x^{(k)}$. We set for the first-order methods as a time limit five times the time needed for the \ssstar Newton method to converge.
\begin{figure}\centering
  \includegraphics[width=5cm]{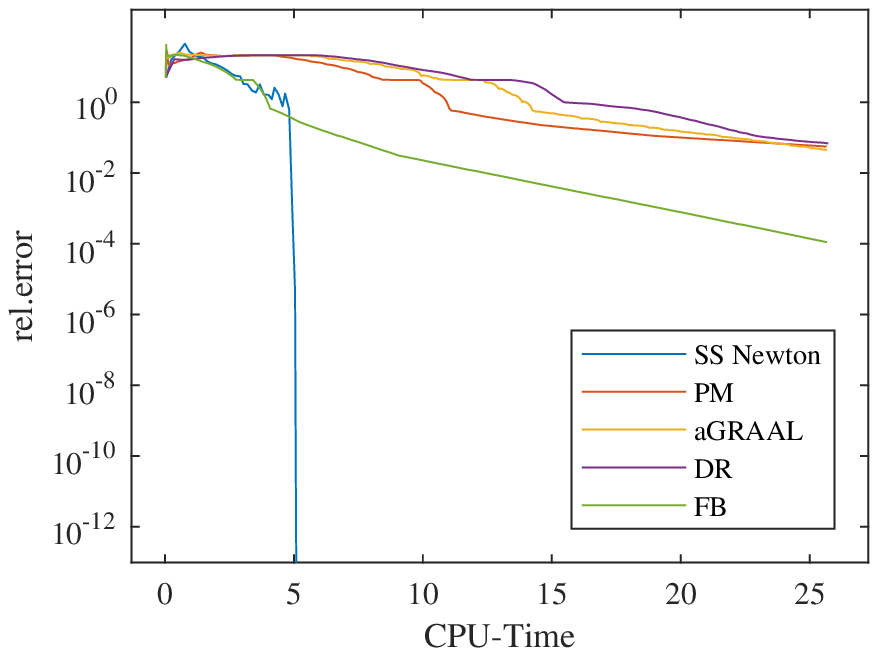}
  \includegraphics[width=5cm]{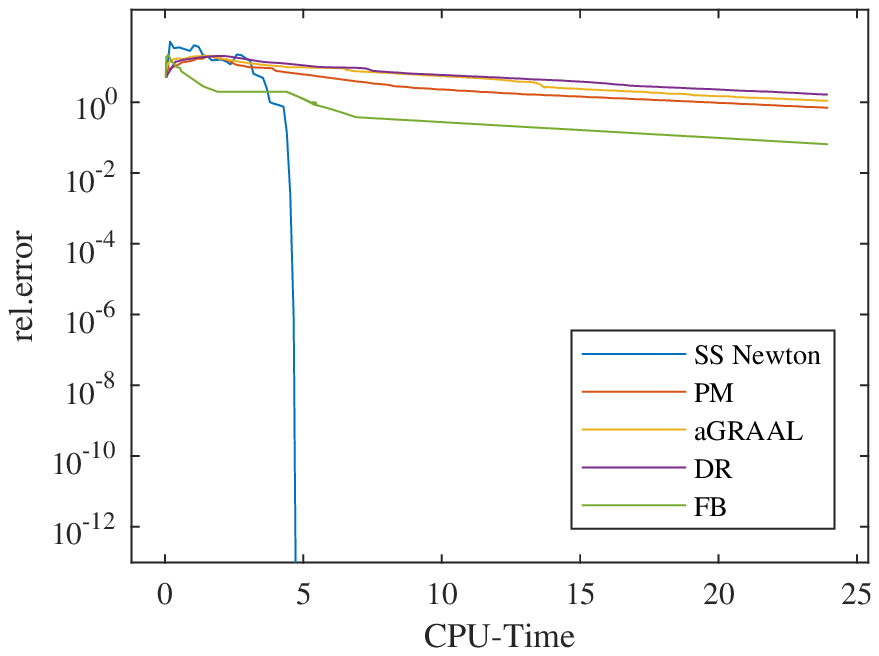}
  \includegraphics[width=5cm]{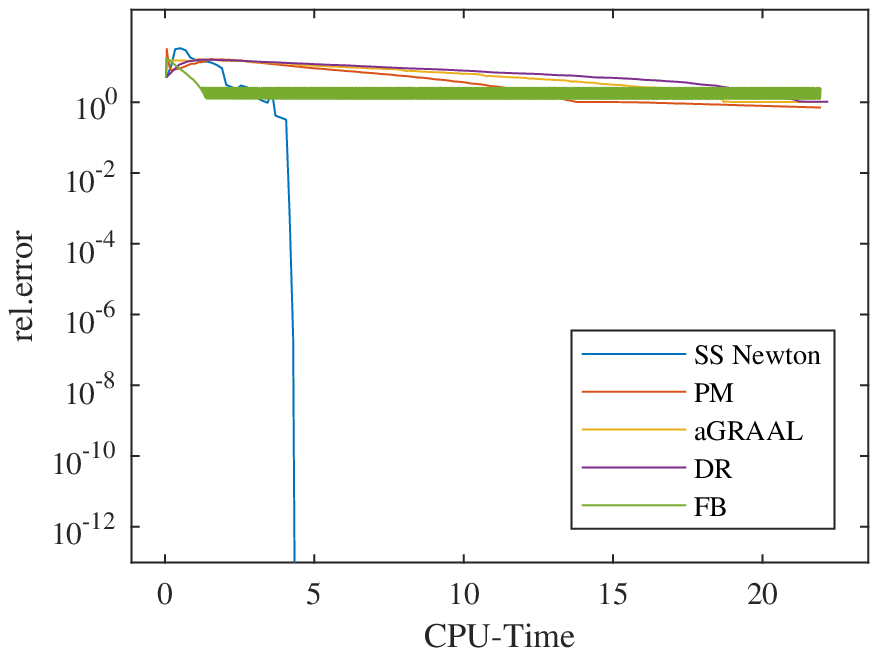}
  \caption{\label{FigComparison}Comparison of the \ssstar Newton method with several first-order methods}
\end{figure}
We can see that only for the first test problem the FB method was able to produce an approximate solution with high accuracy within the time limit. For the FB method, the final relative error was less than $10^{-5}$,  the other methods terminated with a relative error in the range between $4\%$ and  $7\%$. For the second test problem, the relative accuracy of the final iterate for the FB-method was about $8\%$, whereas we could not get even one significant digit with the other methods. For the third test problem, the relative error was for all first-order methods about $100\%$.

\section{Conclusion}
The \ssstar Newton method from \cite{GfrOut21} and its SCD variant from \cite{GfrOut21a} provide us with a powerful tool for numerical solution of a broad class of problems governed by GEs. When facing a concrete problem of this sort, one has to employ appropriate results of variational analysis
in order to implement the AS and the NS in an efficient way. In this paper we suggest an implementation of the SCD \ssstar Newton method for the case of variational inequalities of the 2nd kind, which is a useful modelling framework for a number of practical problems. In particular, in this way one can
model Nash games with convex, possibly nonsmooth  costs, frequently arising, e.g., in economics and biology. Without substantial changes this implementation can be adopted also to the case of the so-called hemivariational inequalities, cf. \cite{Pana}, which are frequently used in various models in nonsmooth mechanics. This could be a topic for a future research.

\section*{Acknowledgements}
The research of the first author was  supported by the Austrian Science Fund
(FWF) under grant P29190-N32. The  research of the second author was supported by the Grant Agency of the
Czech Republic, Project 21-06569K, and the Australian Research Council, Project DP160100854. The  research of the third author was supported by the Grant Agency of the
Czech Republic, Project 21-06569K.

\end{document}